\documentclass[11pt]{article}
\usepackage[a4paper]{geometry}
\usepackage{amsthm,amsmath,amssymb}
\usepackage{graphicx}
\usepackage[colorlinks=true,citecolor=black,linkcolor=black,urlcolor=blue]{hyperref}

\usepackage{xcolor}
\usepackage[textsize=footnotesize]{todonotes}
\usepackage{float}

\usepackage{listings}
\usepackage{xcolor}
\theoremstyle{plain}
\newtheorem{theorem}{Theorem}[section]
\newtheorem{lemma}[theorem]{Lemma}
\newtheorem{corollary}[theorem]{Corollary}
\newtheorem{proposition}[theorem]{Proposition}

\theoremstyle{definition}
\newtheorem{definition}[theorem]{Definition}

\theoremstyle{remark}
\newtheorem{remark}[theorem]{Remark}

\def\F{\mathbb{F}}
\def\P{\mathbb{P}}
\def\A{\mathbb{A}}
\def\Tr{{\textrm{Tr}}}

\title{\bf\huge Infinite families of APN permutations in constrained trivariate classes over $\F_{2^m}$}
\author{{\Large\bf Daniele Bartoli$^1$, Pantelimon St\u{a}nic\u{a}$^2$}
\vspace{.4cm}\\
$^1$ Department of Mathematics and Computer Science,\\
University of Perugia, 06123 Perugia, Italy;
\texttt{daniele.bartoli@unipg.it}\\
$^2$ Applied Mathematics Department, Naval Postgraduate School,\\
Monterey, CA 93943, USA; \texttt{pstanica@nps.edu}}
\date{\today}
\allowdisplaybreaks
\begin{document}
\maketitle

\noindent{\bf Keywords}: APN functions, APN permutations, CCZ-equivalence, finite fields, algebraic curves.

\noindent{\bf Mathematics Subject Classification 2020}: 11T06, 11T71, 12E20, 14G15, 94A60.

\begin{abstract}
The construction of permutation polynomials over finite fields with simple algebraic structure in multiple variables is a challenging problem with significant applications in cryptography and coding theory. Recently, Li and Kaleyski (IEEE, Trans. Inf. Th., 2024) generalized two sporadic quadratic APN permutations from dimension~9 into infinite families of trivariate functions with coefficients in~$\F_2$. In this paper we extend this work by investigating generalizations of both families where the scalar coefficient is allowed to range freely over $\F_{2^m}^*$.
For the family
\[
G_a(x,y,z)=(x^{q+1}+ax^qz+yz^q,\; x^qz+y^{q+1},\; xy^q+ay^qz+z^{q+1}),
\]
with $a\in\F_{2^m}^*$, $q=2^i$, $\gcd(i,m)=1$, and $m$ odd, we prove that the permutation property is characterized by the absence of roots in $\F_{2^m}^*$ of an associated one-variable polynomial, and that this root condition is also equivalent to the APN property. This yields a quantitative lower bound on the number of good parameters: if $d=q^2+q+1$, then at least
$\frac{2^m + 1 - (d-1)(d-2)2^{m/2} - d}{d}$
values of $a\in\F_{2^m}^*$ give APN permutations $G_a$. In the binary case $q=2$, we prove that $a=1$ is good whenever $7\nmid m$, recovering the Li--Kaleyski family and yielding APN permutations in that range.
For the generalized second family
\[
H_a(x,y,z)=(x^{q+1}+axy^q+yz^q,\; xy^q+z^{q+1},\; x^qz+y^{q+1}+ay^qz),
\]
we obtain the analogous root criterion and show that its defining one-variable polynomial is root-equivalent to that of $G_a$. Consequently, the same parameters $a$ produce APN permutations in both families.

We also establish strong non-equivalence results. First, $G_a$ (resp.\ $H_a$) is diagonally equivalent to the Li--Kaleyski representative $G_1$ (resp.\ $H_1$) of Li--Kaleyski (IEEE Trans. Inform. Theory, 2024) if and only if $a^{q^2+q+1}=1$; for $m>4$, $m\neq 6$, $7\nmid m$, diagonal non-equivalence implies CCZ non-equivalence via the monomial restriction theorem of Shi et al.\ (DCC, 2025). In particular, when $q=2$ and $7\nmid m$, every good parameter $a\neq 1$ yields APN permutations CCZ-inequivalent to those of Li--Kaleyski (IEEE Trans. Inform. Theory, 2024). Second, for $m>4$, $m\neq 6$, and $7\nmid m$, no member of the $G_a$-family is CCZ-equivalent to any member of the $H_b$-family for the same $q$. Thus the two families provide genuinely new, mutually inequivalent sources of APN permutations on~$\F_{2^{3m}}$.
\end{abstract}

\section{Introduction and tools from algebraic geometry}
\label{sec:intro}

Let $q = 2^i$ for some positive integers $m$ and $i$ with $\gcd(i,m)=1$, and consider the extension $\F_{2^m}$ of $\F_{q}$. A function $F:\F_{2^m}^n\to\F_{2^m}^n$ is \emph{almost perfect nonlinear} (APN) if for every $a\in\F_{2^m}^n\setminus\{0\}$ and every $b\in\F_{2^m}^n$, the equation $F(x+a)+F(x)=b$ has at most two solutions. Over fields of even characteristic, two is the minimum possible differential uniformity~\cite{BS91}, making APN functions optimal against differential cryptanalysis. When such a function is simultaneously a permutation it is an ideal S-box building block, combining optimal differential resistance with invertibility. Constructing APN permutations is notoriously difficult: only a handful of infinite families are known, and the question of their existence in every even dimension remains open.

This area gained new momentum when Beierle and Leander~\cite{BL22} discovered two sporadic APN permutations over~$\F_{2^9}$ by computer search. An initial generalization attempt by Beierle, Carlet, Leander, and Perrin~\cite{MR4414815} considered the trivariate form $C_u(X,Y,Z)=(X^3+uY^2Z, Y^3+uXZ^2, Z^3+uX^2Y)$ and conjectured it yields no new permutations or APN functions for $m>3$; the APN part was settled by Bartoli and Timpanella~\cite{BT22}, and the permutation part was resolved for all odd $m\geq 23$ in the companion paper~\cite{BPST25}. The decisive positive step came from Li and Kaleyski~\cite{LK24}, who successfully generalized both Beierle--Leander instances into two infinite families of APN permutations over~$\F_{2^{3m}}$ with scalar coefficients from~$\F_2$.

The present paper studies the corresponding trivariate families when the scalar coefficient is allowed to range over all of~$\F_{2^m}^*$. Our main contribution is a root-theoretic analysis of the resulting parametric families $G_a$ and $H_a$. For $G_a$, with $a\in\F_{2^m}^*$, $q=2^i$, $\gcd(i,m)=1$, and $m$ odd, we show that the permutation property is characterized by the absence of roots in $\F_{2^m}^*$ of an associated one-variable polynomial, and that this same root condition is equivalent to the APN property. Thus the trivariate APN/permutation problem is reduced to a univariate root-exclusion problem. We then derive a quantitative lower bound on the number of good parameters $a$ yielding APN permutations $G_a$, and in the binary case $q=2$ we prove that $a=1$ is good whenever $7\nmid m$ by combining our framework with the Li--Kaleyski root criterion. For $H_a$, we obtain the analogous root criterion and prove that the corresponding one-variable conditions for $G_a$ and $H_a$ are root-equivalent; consequently, the same parameters $a$ yield APN permutations in both families.

We further establish strong non-equivalence results. Theorem~\ref{thm:diag_equiv_both} shows that $G_a$ (resp.\ $H_a$) is diagonally equivalent to the Li--Kaleyski representative $G_1$ (resp.\ $H_1$) of~\cite{LK24} if and only if $a^{q^2+q+1}=1$; for $m>4$, $m\neq 6$, $7\nmid m$, diagonal non-equivalence implies CCZ non-equivalence via the monomial restriction theorem of Shi et al.~\cite{Shi-Peng-Kan-Gao-2025}. In particular, when $q=2$ and $7\nmid m$, the condition $\gcd(7,2^m-1)=1$ forces $a^7\neq 1$ for all $a\neq 1$, so every good parameter $a\neq 1$ yields APN permutations CCZ-inequivalent to those of~\cite{LK24}. Theorem~\ref{thm:cross_family_inequiv} shows that, for $m>4$, $m\neq 6$, $7\nmid m$, no member of the $G_a$-family is CCZ-equivalent to any member of the $H_b$-family (for the same $q$). Thus the two families are not only new but mutually inequivalent infinite sources of APN permutations on~$\F_{2^{3m}}$.

Our first companion paper~\cite{BPST25}  establishes the affine reduction to a canonical four-parameter form, derives the necessary non-permutation conditions via algebraic geometry, provides a complete classification for $m=3$, and resolves the Beierle--Carlet--Leander--Perrin conjecture for odd $m\geq 23$. Results from~\cite{BPST25} that we need are stated explicitly below; their full proofs are not repeated here.

The paper is organized as follows. Section~\ref{sec:intro} also collects the algebraic geometry tools used throughout. Section~\ref{sec:firstclass} recalls the permutation characterization for $G_a$ from~\cite{BPST25}, establishes the polynomial equivalences, proves the APN equivalence, and derives the existence results for APN permutations (including a quantitative bound for $q>2$ and the binary case $q=2$ when $7\nmid m$). Section~\ref{sec:Ha} treats the second family $H_a$ in full, establishes when $G_a$ and $H_a$ are CCZ-equivalent to the Li--Kaleyski representatives $G_1$ and $H_1$ (Theorem~\ref{thm:diag_equiv_both}), and proves that no member of the $G_a$-family is CCZ-equivalent to any member of the $H_b$-family for the same $q$ (Theorem~\ref{thm:cross_family_inequiv}). Section~\ref{sec:conclusion} summarizes the main findings and collects open problems. Appendix~\ref{sec:computational} and the codes available at~\cite{GithubPS26} provide the computational verification.

\subsection*{Tools from algebraic geometry}

We work with varieties over $\overline{\F_2}$, the algebraic closure of $\F_2$. We use standard notation $\A^r$ and $\P^r$ for affine and projective $r$-space. A variety is \emph{absolutely irreducible} if it is irreducible over $\overline{\F_2}$, i.e., cannot be decomposed as a union of two proper subvarieties even after passing to the algebraic closure. The two main results we import are as follows.

\begin{theorem}[{\cite[Theorem~7.1]{MR2206396}}]
\label{thm:lang_weil}
Let $\mathcal{V}\subseteq\A^n(\overline{\F_q})$ be an absolutely irreducible variety defined over $\F_q$ of dimension $r>0$ and degree~$\delta$. If $q>2(r+1)\delta^2$, then
\[
\bigl|\#\mathcal{V}(\F_q) - q^r\bigr| \;\leq\; (\delta-1)(\delta-2)q^{r-1/2}+5\delta^{13/3}q^{r-1}.
\]
In particular, $\mathcal{V}$ has at least one $\F_q$-rational point when $q$ is large enough relative to $\delta$.
\end{theorem}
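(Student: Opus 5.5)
This is the Lang--Weil-type estimate of Cafure--Matera, and I would prove it by reduction to the case of curves, where the Hasse--Weil bound applies. Throughout I assume the standard Weil inequality for curves: an absolutely irreducible curve $\mathcal{C}\subseteq\A^n$ of degree $\delta$ defined over $\F_q$ satisfies $|\#\mathcal{C}(\F_q)-q|\le(\delta-1)(\delta-2)q^{1/2}+\delta$. The argument then proceeds in three steps.

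\textbf{Step 1 (reduction to $r=1$).} For $r\ge 2$, choose an affine-linear projection $\pi:\mathcal{V}\to\A^{r-1}$ defined over $\F_q$. For each $\F_q$-rational point $t\in\A^{r-1}(\F_q)$, the fibre $\mathcal{V}_t=\mathcal{V}\cap\pi^{-1}(t)$ is then a curve of degree at most $\delta$, and
\[
\#\mathcal{V}(\F_q)=\sum_{t\in\F_q^{r-1}}\#\mathcal{V}_t(\F_q).
\]

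\textbf{Step 2 (Bertini-type count of bad fibres).} I need a quantitative effective Bertini theorem: after a generic linear change of coordinates, which is available over $\F_q$ precisely because $q>2(r+1)\delta^2$, all but $O(\delta^{4}q^{r-2})$ of the fibres $\mathcal{V}_t$ are absolutely irreducible curves of degree exactly $\delta$. To prove it, I would express the absolute irreducibility of a section by a linear space through a Noether normalization. The failure of irreducibility is then governed by the vanishing of a nonzero polynomial in the parameters of the linear space, and this polynomial has degree bounded by a function of $\delta$ (via Gao's/Kaltofen's criterion on the Ostrowski--Ruppert form of the generic plane-section polynomial). The Schwartz--Zippel lemma then bounds the number of bad $\F_q$-parameters, and the bound $q>2(r+1)\delta^2$ guarantees that a good choice of coordinates exists. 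I expect this to be the main obstacle, because controlling the degree of the Ruppert-type obstruction polynomial is what produces the exponent $13/3$ after optimization.

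\textbf{Step 3 (summation).} Summing Hasse--Weil over the good fibres gives a main term of $q^{r-1}\bigl(q+O((\delta-1)(\delta-2)q^{1/2})\bigr)$, which yields $q^r$ together with the error $(\delta-1)(\delta-2)q^{r-1/2}$. On each bad fibre I use the trivial bound of $\delta q$ points, with the count of bad fibres taken from Step~2. Both this contribution and the $+\delta$ terms from Hasse--Weil are of order $\delta^{13/3}q^{r-1}$ once the constants from Step~2 are tracked. Hence
\[
\bigl|\#\mathcal{V}(\F_q)-q^r\bigr|\le(\delta-1)(\delta-2)q^{r-1/2}+5\delta^{13/3}q^{r-1}.
\]
For the final clause, note that the right-hand side is $o(q^r)$ for fixed $\delta$, so $\#\mathcal{V}(\F_q)>0$ once $q$ is large compared with $\delta$.
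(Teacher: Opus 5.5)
The paper gives no proof of this statement; it is quoted from Cafure--Matera \cite{MR2206396}. Your overall architecture (an effective first Bertini theorem, the Weil bound on the good curve sections, a trivial bound on the bad ones) is the right kind of argument. As a proof of this \emph{explicit} inequality, however, it has two genuine gaps.

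The first gap is in Step~2. There you need a single $\F_q$-rational projection $\pi$ whose generic fibre is absolutely irreducible, and you say $q>2(r+1)\delta^2$ provides it via Schwartz--Zippel. But Schwartz--Zippel only yields an $\F_q$-point off the zero set of a nonzero polynomial of degree $D$ when $q>D$. The polynomial governing this property has the degree of your Bertini obstruction, which is of order $\delta^4$ by your own estimate. A hypothesis quadratic in $\delta$ cannot pay for that, so for $2(r+1)\delta^2<q\lesssim\delta^4$ you have no guarantee that a good fibration exists.

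The fix is not to commit to one projection. Take an $\F_q$-rational Noether normalization $\pi:\mathcal{V}\to\A^r$ and average over all sections $\pi^{-1}(\ell)$, with $\ell$ an $\F_q$-rational line of $\A^r$. Each such section is a curve of degree at most $\delta$, and every point of $\mathcal{V}(\F_q)$ lies on the same number of them. So $\#\mathcal{V}(\F_q)$ is $q^{r-1}$ times the average of $\#\pi^{-1}(\ell)(\F_q)$, and the bad lines enter only through their proportion, which Schwartz--Zippel bounds for every $q$. Alternatively, dispose of $q\lesssim\delta^4$ with the trivial bound $\#\mathcal{V}(\F_q)\le\delta q^r$.

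The second gap is that the constant $5\delta^{13/3}$ is asserted, not derived. With $O(\delta^4q^{r-2})$ bad fibres, each controlled only by the trivial bound of $\delta q$ points, the bad fibres cost $O(\delta^5q^{r-1})$. This exceeds $5\delta^{13/3}q^{r-1}$ once $\delta$ is large. Tracking constants cannot turn $\delta^5$ into $\delta^{13/3}$, and nothing in your argument is optimized in a way that could produce the exponent $13/3$.

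Reaching that exponent needs a genuinely finer treatment of the degenerate sections than ``degree of the bad locus times the trivial bound''. That is exactly the quantitative content of the cited theorem, and your sketch omits it. After the repair above, what your argument does prove is
$\bigl|\#\mathcal{V}(\F_q)-q^r\bigr|\le(\delta-1)(\delta-2)q^{r-1/2}+O(\delta^5)q^{r-1}$.
That is enough for the final clause and for the ``$m$ large enough'' uses of Lang--Weil in this paper, but it is not the stated inequality.
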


\begin{lemma}[{\cite[Lemma~2.1]{MR2648536}}]
\label{lem:aubry}
Let $\mathcal{H}$ be a projective hypersurface and $\mathcal{X}$ an $\F_q$-rational projective variety of dimension $n-1$ in $\P^n(\overline{\F_q})$. If the intersection $\mathcal{X}\cap\mathcal{H}$ contains a non-repeated absolutely irreducible $\F_q$-component, then so does~$\mathcal{X}$ itself.
\end{lemma}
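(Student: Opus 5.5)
We will work with the intersection cycle $\mathcal{X}\cdot\mathcal{H}$ rather than the set $\mathcal{X}\cap\mathcal{H}$. Here ``non-repeated'' will mean that the component occurs with multiplicity one in this cycle. The idea is to locate the given component of $\mathcal{X}\cap\mathcal{H}$ inside a single reduced component of $\mathcal{X}$, and then use Frobenius to show that this component of $\mathcal{X}$ is itself $\F_q$-rational.

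First we fix notation. Since $\mathcal{X}$ has dimension $n-1$ in $\P^n$, it is a hypersurface $F=0$ with $F\in\F_q[X_0,\dots,X_n]$. Over $\overline{\F_q}$ we factor
\[
F=\prod_{j} F_j^{e_j},
\]
with the $F_j$ pairwise non-associate and absolutely irreducible. Let $\mathcal{X}_j=\{F_j=0\}$. Then
\[
\mathcal{X}\cap\mathcal{H}=\bigcup_j(\mathcal{X}_j\cap\mathcal{H}).
\]
Let $\mathcal{W}$ be the given non-repeated, absolutely irreducible, $\F_q$-rational component of $\mathcal{X}\cap\mathcal{H}$.

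The first step is to check that no $\mathcal{X}_j$ on which $\mathcal{W}$ lies is contained in $\mathcal{H}$. If $\mathcal{X}_j\subseteq\mathcal{H}$ and $\mathcal{W}\subseteq\mathcal{X}_j$, then the irreducible set $\mathcal{X}_j\subseteq\mathcal{X}\cap\mathcal{H}$ would strictly contain $\mathcal{W}$, contradicting maximality. Hence every $\mathcal{X}_j$ containing $\mathcal{W}$ meets $\mathcal{H}$ properly. Each such intersection is pure of dimension $n-2$, and $\mathcal{W}$ is a component of it.

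The second step is a multiplicity count, based on additivity of intersection multiplicities along $\mathcal{W}$:
\[
i(\mathcal{W};\mathcal{X}\cdot\mathcal{H})=\sum_{j:\,\mathcal{W}\subseteq\mathcal{X}_j} e_j\, i(\mathcal{W};\mathcal{X}_j\cdot\mathcal{H}),
\]
where every term on the right is at least $e_j\geq 1$. Since the left-hand side equals $1$, there is exactly one index $j_0$ with $\mathcal{W}\subseteq\mathcal{X}_{j_0}$, and moreover $e_{j_0}=1$. So $\mathcal{X}_{j_0}$ is a non-repeated absolutely irreducible component of $\mathcal{X}$.

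The third step is rationality. The $q$-Frobenius $\sigma$ acts on coefficients and permutes the factors $F_j$, preserving the exponents $e_j$, because $F$ is defined over $\F_q$. Since $\mathcal{W}$ is $\F_q$-rational, $\sigma(\mathcal{W})=\mathcal{W}$. Therefore $\sigma(\mathcal{X}_{j_0})$ is a component of $\mathcal{X}$ containing $\mathcal{W}$, and by the uniqueness from the second step it equals $\mathcal{X}_{j_0}$. Normalizing $F_{j_0}$ (say, making one nonzero coefficient equal to $1$) then shows that $F_{j_0}$ is fixed by $\sigma$, so $F_{j_0}\in\F_q[X_0,\dots,X_n]$. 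This gives the required non-repeated absolutely irreducible $\F_q$-component of $\mathcal{X}$.

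The main obstacle is justifying the additivity formula for the multiplicities. I would invoke the standard intersection theory of a divisor with a hypersurface, for instance Fulton's formula for the proper intersection of a Cartier divisor with a cycle, applied to the cycle $\sum_j e_j[\mathcal{X}_j]$. Alternatively, one can argue locally: at the generic point of $\mathcal{W}$, the local ring of $\mathcal{X}\cap\mathcal{H}$ has length at least $\sum e_j$, summed over those $j$ with $\mathcal{W}\subseteq\mathcal{X}_j$. Lengths there are computed over the function field of $\mathcal{W}$.
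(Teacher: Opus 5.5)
The paper imports this lemma from Aubry--McGuire--Rodier without proof, and your argument is essentially the standard one from that source: non-repeatedness forces $\mathcal{W}$ to lie in a unique absolutely irreducible component $\mathcal{X}_{j_0}$ of $\mathcal{X}$, and $\sigma(\mathcal{W})=\mathcal{W}$ then gives $\sigma(\mathcal{X}_{j_0})=\mathcal{X}_{j_0}$, hence $\F_q$-rationality; your multiplicity additivity makes ``non-repeated'' precise and also yields $e_{j_0}=1$. The one point to tighten is that ``strictly contain'' in your first step needs $\dim\mathcal{W}=n-2$, which is implicit once $\mathcal{W}$ is read as a component of the $(n-2)$-cycle $\mathcal{X}\cdot\mathcal{H}$; if instead $\mathcal{W}$ were itself a component of $\mathcal{X}$ lying in $\mathcal{H}$, rationality would be immediate but $e_{j_0}=1$ could fail (e.g.\ $F=L^2M$ with $\mathcal{H}=\{L=0\}$ a hyperplane).
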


\section{Characterization of the first generalized family \texorpdfstring{$G_a$}{Ga}}
\label{sec:firstclass}

\subsection{The first class and equivalent polynomials}

Throughout, $m\geq 3$ is an odd integer, $i$ is a positive integer with $\gcd(i,m)=1$, and $q=2^i$. Since $m$ is odd and all characteristics are 2, every element of $\F_{2^m}$ has a unique $q$-th root and the Frobenius $x\mapsto x^q$ is a bijection on $\F_{2^m}$.

\begin{definition}
\label{def:Ga}
For $a\in\F_{2^m}^*$, define $G_a:\F_{2^m}^3\to\F_{2^m}^3$ by
\[
G_a(x,y,z) = \bigl(x^{q+1}+ax^qz+yz^q,\; x^qz+y^{q+1},\; xy^q+ay^qz+z^{q+1}\bigr).
\]
\end{definition}

When $a=1$ this is the first Li--Kaleyski family $F_1$; see~\cite{LK24}. The function $G_a$ is a degree-2 polynomial map over $\F_{2^m}^3$ 
with the following symmetry: if $\sigma$ denotes the coordinate swap $(x,y,z)\mapsto(z,y,x)$, then
$\sigma\circ G_a\circ\sigma = G_a$,
as one verifies directly by checking that the first and third components of $G_a(z,y,x)$ are the third and first components of $G_a(x,y,z)$ respectively, while the second component is unchanged. This symmetry will play a role in the proofs below.

The permutation and APN properties of $G_a$ are both controlled by a single polynomial in $\F_{2^m}[T]$. Several polynomial forms arise naturally, and the next proposition shows they are all root-equivalent.

\begin{proposition}
\label{prop:poly_equivalences}
Let $a\in\F_{2^m}^*$. Consider the following polynomials in $\F_{2^m}[T]$:
\begin{align*}
P_a(T)   &= T^{q^2+q+1}+(aT^q+1)^{q+1},\\
P_a'(T)  &= T^{q^2+q+1}+aT^{q^2+q}+1,\\
Q_a(T)   &= T^{q^2+q+1}+aT+1,\\
Q_{a^q}(T) &= T^{q^2+q+1}+a^qT+1,\\
R_a(T)   &= T^{q^2+q+1}+(aT+1)^{q+1},\\
S_a(T)   &= T^{q^2+q+1}+a^qT^{q+1}+1.
\end{align*}
Each polynomial evaluates to $1$ at $T=0$, so none has $0$ as a root. 
Moreover, any one of them has a root in $\F_{2^m}^*$ if and only if all of them do (we call such, root-equivalent).
\end{proposition}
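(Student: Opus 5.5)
The plan is to build a chain of explicit bijections between root sets in $\F_{2^m}^*$, with every map either a Frobenius power $T\mapsto T^{q^k}$, the inversion $T\mapsto T^{-1}$, or a rational substitution of the form $T\mapsto T^{e}/(\alpha T+1)^{f}$. Each map must be defined on the relevant roots and invertible. Throughout, write $d=q^2+q+1$. The first claim is immediate: every listed polynomial has constant term $1$, so $T=0$ is never a root.

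First the easy links.
\begin{itemize}
\item $P_a'\leftrightarrow Q_a$ by inversion. Since $T^dP_a'(1/T)=1+aT+T^d=Q_a(T)$, the map $t\mapsto t^{-1}$ is a bijection between the nonzero roots of $P_a'$ and those of $Q_a$.
\item $Q_a\leftrightarrow Q_{a^q}$ by Frobenius. We have $Q_a(T)^q=(T^q)^d+a^qT^q+1=Q_{a^q}(T^q)$. Because $x\mapsto x^q$ permutes $\F_{2^m}$, $t\mapsto t^q$ is a bijection between the roots of $Q_a$ and those of $Q_{a^q}$.
\item $S_a\leftrightarrow Q$ by inversion followed by Frobenius. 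Here $T^dS_a(1/T)=T^d+a^qT^{q^2}+1$. The middle exponent $q^2$ becomes a linear term after a suitable Frobenius twist of the variable, using that $t^d=t\cdot t^q\cdot t^{q^2}$ is the relative norm-type product, which is twist-compatible. I would check this directly by writing $t=s^{q^{k}}$ for the appropriate $k$ and comparing with $Q_{a^{q^{j}}}$. The previous item then closes the gap between different Frobenius twists of $a$.
\end{itemize}

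The genuinely nonlinear links are $P_a$ and $R_a$, whose "linear part" appears raised to the power $q+1$. For $R_a$, a root $x$ satisfies $x\cdot x^q\cdot x^{q^2}=(ax+1)(ax+1)^q$. This suggests introducing $w=ax+1$ and a new variable $u=x^{\alpha}w^{\beta}$, with $\alpha,\beta$ chosen so that the relation collapses to $u^d=au+1$ or to one of its twists. Concretely, I would use $x^d=w^{q+1}$ to express $w$ multiplicatively through $x$. I would then try $u=w/x^{q^2}$ or $u=x^{q}/w$ and verify that $u$ satisfies $Q_{a}$, $Q_{a^q}$ or $P_a'$. The inverse map must recover $x$ rationally from $u$; for example $x=(u^d+1)/a$-type expressions coming from the $Q$-relation. $P_a$ is handled the same way with $w=ax^q+1$. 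Alternatively, I would first relate $P_a$ to $R_a$ by the twist $T\mapsto T^{q}$ combined with inversion, since $(aT^q+1)$ and $(aT+1)$ differ by exactly such a twist.

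The main obstacle is finding these substitutions for $R_a$ and $P_a$ and checking that they are well defined and inverse to each other. Well-definedness requires that no root makes a denominator vanish, i.e.\ $ax+1\neq0$. This holds because $ax+1=0$ would force $x^d=0$, contradicting $x\neq0$. The remaining verifications are routine algebra. Once the chain $P_a\leftrightarrow R_a\leftrightarrow Q_{a^{q^j}}\leftrightarrow Q_a\leftrightarrow P_a'$ and $S_a\leftrightarrow Q_{a^{q^{j}}}$ is established, root-equivalence of all six polynomials follows by transitivity.
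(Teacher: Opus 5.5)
You prove $P_a'\leftrightarrow Q_a\leftrightarrow Q_{a^q}$ correctly. For $P_a$ your candidate also works: with $w=ax^q+1$ and $u=x^q/w$ you get $u^{q+1}=x^{q^2+q}/x^{d}=1/x$ and then $Q_a(u)=0$, with inverse $u\mapsto u^{-(q+1)}$. This is the paper's Steps~1--2 done in one move.

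The gap is the bridge from $S_a$ and $R_a$ to this class. Your $S_a$ step needs $t\mapsto t^q$ to commute with $t\mapsto t^d$, i.e.\ $t^{q^3}=t$. With $\gcd(i,m)=1$, that holds on all of $\F_{2^m}$ only for $m=3$. In general $T=s^{q^k}$ only conjugates the coefficient: $T^d+cT^{q^2}+1$ at $T=s^{q^k}$ is the $q^k$-th power of $s^d+c^{q^{-k}}s^{q^2}+1$. Inversion only swaps the middle exponents $q^2\leftrightarrow q+1$. Neither operation can produce the middle exponents $1$ or $q^2+q$ of $Q$ and $P_a'$. The same objection rules out your alternative ``$P_a\leftrightarrow R_a$ by twist and inversion'': $R_a(T^q)=T^{qd}+(aT^q+1)^{q+1}$, and $T^{qd}=T^d$ again requires $T^{q^3}=T$.

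For $R_a$, your substitutions land in the other class. With $w=ax+1$, the choice $u=x^q/w$ satisfies $u^d+au^{q^2}+1=0$. The choice $W=w/x$ satisfies $W^d+a^{q^2}W^{q+1}+1=0$, i.e.\ $S_{a^q}(W)=0$. So elementary substitutions split the six polynomials into $\{P_a,P_a',Q_a,Q_{a^q}\}$ and $\{R_a,S_a\}$ (up to coefficient twists), and nothing in your plan connects the two. The paper's proof does not close this either. Its Step~5 uses exactly $u^{q^3}=u$. In its Step~4 the reduced polynomial $U_0^{q^2}(U_0+a)^{q+1}+1$ is not $P_a(U_0)$; it belongs to the $S$-class.

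The missing ingredient is a duality (dimension) argument rather than a bijection of roots. Since $\gcd(q-1,2^m-1)=1$, the map $T=s^{q-1}$ is a bijection of $\F_{2^m}^*$. Under it, $sQ_a(s^{q-1})=L_a(s)=s^{q^3}+as^q+s$ and $sS_{a^q}(s^{q-1})=N(s):=s^{q^3}+a^{q^2}s^{q^2}+s$. With respect to the nondegenerate form $\Tr(xy)$, the adjoint of $L_a$ is $L_a^*(y)=y^{q^{-3}}+(ay)^{q^{-1}}+y$, where the negative exponents denote inverse Frobenius powers. Moreover $L_a^*(y)^{q^3}=N(y)$. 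Because $\ker L_a^*=(\mathrm{Im}\,L_a)^{\perp}$, the kernels of $L_a$ and $N$ have equal $\F_2$-dimension. Hence $Q_a$ and $S_{a^q}$ have the same number of roots in $\F_{2^m}^*$. Combined with $R_a\leftrightarrow S_{a^q}$ via $W=1/x+a$ and with $S_a(t)^q=S_{a^q}(t^q)$, this completes the chain.
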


\begin{proof}
We establish the equivalences through a chain of explicit substitutions, each preserving the root locus in~$\F_{2^m}^*$.

\smallskip\noindent
\textbf{Step 1: $P_a \leftrightarrow P_a'$.}
Expanding $(aT^q+1)^{q+1}$ gives
\[
P_a(T)=T^{q^2+q+1}+a^{q+1}T^{q^2+q}+a^qT^{q^2}+aT^q+1.
\]
Let
\[
P_a^*(T):=T^{q^2+q+1}P_a(1/T)
\]
be the reciprocal polynomial. A direct computation yields
\[
P_a^*(T)
=1+a^{q+1}T+a^qT^{q+1}+aT^{q^2+1}+T^{q^2+q+1}
= T(T^q+a)^{q+1}+1.
\]
Since the map $T\mapsto T^{-1}$ is a bijection of $\F_{2^m}^*$, $P_a$ has a root in $\F_{2^m}^*$ if and only if $P_a^*$ does.
We now show that $P_a^*$ and $P_a'$ are root-equivalent on $\F_{2^m}^*$.
Suppose $U\in\F_{2^m}^*$ satisfies $P_a^*(U)=0$. Set
$W:=U^q+a$.
Then $W\neq 0$ (otherwise $P_a^*(U)=U\cdot 0^{q+1}+1=1\neq 0$). From
\[
P_a^*(U)=U(U^q+a)^{q+1}+1=0
\]
we get
\[
U\,W^{q+1}=1,
\quad\text{hence}\quad
U=W^{-(q+1)}.
\]
Substituting into $W=U^q+a$ gives $W=W^{-q(q+1)}+a$.
Multiplying by $W^{q^2+q}$ (which is valid since $W\neq 0$) yields
\[
W^{q^2+q+1}+aW^{q^2+q}+1=0,
\text{ i.e. } 
P_a'(W)=0.
\]
Thus every nonzero root of $P_a^*$ produces a nonzero root of $P_a'$.
Conversely, suppose $W\in\F_{2^m}^*$ satisfies $P_a'(W)=0$, i.e.
\[
W^{q^2+q+1}+aW^{q^2+q}+1=0.
\]
Define
$U:=W^{-(q+1)}\in\F_{2^m}^*$.
Dividing the equation by $W^{q^2+q}$ gives
$W+a+W^{-q(q+1)}=0$.
Since $U^q=W^{-q(q+1)}$, this becomes $W=U^q+a$.
Therefore
\[
P_a^*(U)=U(U^q+a)^{q+1}+1
=U W^{q+1}+1
=W^{-(q+1)}W^{q+1}+1
=1+1=0.
\]
So $P_a'$ has a nonzero root if and only if $P_a^*$ does, and hence (via reciprocity) if and only if $P_a$ does.

\smallskip\noindent
\textbf{Step 2: $P_a' \leftrightarrow Q_a$.}
Observe that $Q_a$ is the reciprocal of~$P_a'$:
\[
T^{q^2+q+1}P_a'(1/T) = 1+aT+T^{q^2+q+1} = Q_a(T).
\]
Hence $P_a'(T_0)=0$ iff $Q_a(1/T_0)=0$, and the map $T_0\mapsto 1/T_0$ is a bijection on $\F_{2^m}^*$. Therefore $P_a'$ and $Q_a$ have roots in $\F_{2^m}$ simultaneously.

\smallskip\noindent
\textbf{Step 3: $Q_a \leftrightarrow Q_{a^q}$.}
The Frobenius automorphism $x\mapsto x^q$ is a bijection on $\F_{2^m}$. If $T_0\in\F_{2^m}^*$ satisfies $Q_a(T_0)=0$, i.e., $T_0^{q^2+q+1}+aT_0+1=0$, then raising both sides to the $q$-th power gives $(T_0^q)^{q^2+q+1}+a^qT_0^q+1=0$, i.e., $Q_{a^q}(T_0^q)=0$. Since $T_0^q\in\F_{2^m}^*$ and the map $T_0\mapsto T_0^q$ is a bijection, $Q_a$ has a root in $\F_{2^m}^*$ iff $Q_{a^q}$ does.

\noindent
\textbf{Step 4: $Q_a \leftrightarrow R_a$.}
Suppose $R_a(T_0)=0$ for some $T_0\in\F_{2^m}^*$, i.e.,
$T_0^{q^2+q+1}+(aT_0+1)^{q+1}=0$. Dividing by $T_0^{q+1}$ (which is nonzero) and setting $U_0=T_0^{-1} \in\F_{2^m}^*$:
\[
U_0^{-q^2}+(a+U_0)^{q+1}=0.
\]
Multiplying by $U_0^{q^2}$ and expanding
$(a+U_0)^{q+1}=(a^q+U_0^q)(a+U_0)$ gives
\[
1+a^{q+1}U_0^{q^2}+aU_0^{q^2+q}+a^qU_0^{q^2+1}+U_0^{q^2+q+1}=0,
\]
which is precisely $P_a(U_0)=0$. By Steps~1--2, $P_a(U_0)=0$ implies $Q_a$ has a root in $\F_{2^m}^*$. The argument reverses (replacing $T_0$ by $U_0^{-1}$), completing the equivalence.

\noindent\textbf{Step 5: $S_a \leftrightarrow Q_a$.}
Since $S_a(0)=1$, all roots of $S_a$ in $\F_{2^m}$ are nonzero. Set
$F_a(T):=T^{q^2+q+1}+a^qT^{q^2}+1$.
Then $T^{q^2+q+1}S_a(1/T)=F_a(T)$,
so $S_a$ and $F_a$ are root-equivalent over $\F_{2^m}^*$.
Now let $u\in\F_{2^m}^*$. Then
$F_a(u)=0$ if and only if $u^{q^2+q+1}+a^qu^{q^2}+1=0$.
Raising both sides to the $q$-th power and using $u^{q^3}=u$ gives 
$u^{q^2+q+1}+a^{q^2}u+1=0$, i.e., $Q_{a^{q^2}}(u)=0$.
Conversely, if $Q_{a^{q^2}}(u)=0$, then raising to the $q^2$-th power yields $u^{q^2+q+1}+a^qu^{q^2}+1=0$, that is, $F_a(u)=0$.
Hence $F_a$ and $Q_{a^{q^2}}$ are root-equivalent over $\F_{2^m}^*$. By Step~3,
$Q_{a^{q^2}}$ and $Q_a$ are root-equivalent. Therefore $S_a$ and $Q_a$ are
root-equivalent over $\F_{2^m}$.

Combining all five steps yields the claimed equivalence, and the proof of the proposition is shown.
\end{proof}

We recall the following result from the companion paper, which characterizes when $G_a$ is a permutation.

\begin{theorem}[{\cite[Theorem~4.2]{BPST25}}]
\label{thm:perm_char}
Let $m\geq 3$ be odd, $\gcd(i,m)=1$, and $a\in\F_{2^m}^*$. The function $G_a$ is a permutation on $\F_{2^m}^3$ if and only if the polynomial $Q_a(T) = T^{q^2+q+1}+aT+1$ has no root in~$\F_{2^m}$.
\end{theorem}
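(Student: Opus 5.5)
We prove Theorem~\ref{thm:perm_char} by reducing the permutation property of $G_a$ to the absence of nonzero zeros of a single quadratic form. We then show that any nonzero zero forces a root of one of the root-equivalent polynomials of Proposition~\ref{prop:poly_equivalences}.

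Since $G_a$ is quadratic with $G_a(0)=0$, its derivatives $D_uG_a(v)=G_a(u+v)+G_a(u)+G_a(v)$ are $\F_2$-bilinear. For such maps, $G_a$ fails to be injective if and only if there exist $u\neq 0$ and $v$ with $G_a(v+u)=G_a(v)$. A cleaner route uses homogeneity. If $(x,y,z)$ is scaled by $\lambda\in\F_{2^m}^*$, every component scales by $\lambda^{q+1}$. Because $m$ is odd, $\gcd(q+1,2^m-1)=1$, so $\lambda\mapsto\lambda^{q+1}$ is bijective on $\F_{2^m}^*$. Using this, we would first show that $G_a$ is a permutation if and only if $G_a(x,y,z)=0$ has only the trivial solution and $G_a$ is injective on lines through the origin. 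In practice, I would follow the companion paper and study the system $G_a(x,y,z)=G_a(x',y',z')$ directly in coordinates.

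The concrete plan is as follows. Suppose $G_a(P)=G_a(P')$ with $P\neq P'$. Write $P'=P+U$ with $U=(u_1,u_2,u_3)\neq 0$. We get the system $G_a(U)+B(P,U)=0$, where $B$ is the bilinear part, and this system is linear in $P$ over $\F_{2^m}$ after applying Frobenius twists. We split into cases according to which of $u_1,u_2,u_3$ vanish. The degenerate cases, such as $u_2=0$ or $u_1=u_3=0$, should be dispatched directly using the bijectivity of $x\mapsto x^{q+1}$ and of Frobenius. In the generic case we normalize by the scaling symmetry, say $u_2=1$, and set $T=u_1/u_3$ or a similar ratio, also using the swap symmetry $\sigma$. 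We then eliminate the $P$-variables via the $q$-linear resultant, which amounts to computing the determinant of a $3\times 3$ linearized system. We expect the solvability condition to collapse to an identity $T^{q^2+q+1}+(aT^q+1)^{q+1}=0$ or to one of its variants $R_a$ or $S_a$. Proposition~\ref{prop:poly_equivalences} then converts this to $Q_a$ having a root in $\F_{2^m}^*$.

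For the converse, we start from a root $T_0$ of $Q_a$, or equivalently of whichever variant appears naturally. We back-substitute to construct explicit $U\neq0$ and $P$ with $G_a(P+U)=G_a(P)$. Alternatively, we construct an explicit nonzero $(x,y,z)$ with $G_a(x,y,z)=0$: take $x,z$ in terms of $T_0$ and $y=1$, for instance $x^qz=1$ and $x^{q+1}+ax^qz+z^q=0$, which after substituting $z=x^{-q}$ gives a polynomial in $x$ matching $P_a$ or $P_a'$. Then $G_a(0)=G_a(x,y,z)$ breaks injectivity. The third component must also vanish, and this is where the root condition is used. Since $Q_a(0)=1$, the statement ``no root in $\F_{2^m}$'' is the same as ``no root in $\F_{2^m}^*$.''

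The main obstacle is the forward elimination. We must show that every collision, not only a zero of $G_a$, yields a root of $Q_a$. Here the linearized determinant must be computed carefully, and the case analysis must ensure that vanishing coordinates do not produce additional collisions. I would first try to prove that $G_a$ permutes if and only if $G_a^{-1}(0)=\{0\}$, by exploiting that $G_a$ is a quadratic form in the $\F_{2^m}$-semilinear sense. If that fails, I would fall back on the full difference system.
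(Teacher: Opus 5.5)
\textbf{There is a genuine gap in the forward direction.} The forward direction (no root of $Q_a$ $\Rightarrow$ $G_a$ injective) is the substance of the theorem, and the mechanism you propose for it cannot work. For fixed $U\neq 0$, a collision $G_a(P+U)=G_a(P)$ means $\ell_U(P)=G_a(U)$, where $\ell_U:=D_UG_a$ is $\F_2$-linear. Moreover $U\in\ker\ell_U$ always, since $D_UG_a(U)=G_a(0)=0$. So $\ell_U$ is never injective, and a determinant or $q$-resultant of the $3\times 3$ linearized system only detects this ever-present kernel. It says nothing about whether the inhomogeneous term $G_a(U)$ lies in the image hyperplane. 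That membership is an arithmetic trace (Artin--Schreier) condition, not a polynomial identity in a ratio of coordinates of $U$, and it is exactly where the oddness of $m$ must enter.

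You can see this already for $U=(A,0,0)$, which you propose to dispatch by bijectivity of $x\mapsto x^{q+1}$ and of Frobenius. There $\ell_U(x,y,z)=(A^qx+Ax^q+aA^qz,\;A^qz,\;Ay^q)$ and $G_a(U)=(A^{q+1},0,0)$. A collision therefore forces $y=z=0$ and $t^q+t=1$ with $t=x/A$. This is excluded only by Lemma~\ref{lem:trace}; for even $m$ it is solvable, and then $G_a$ is never a permutation.

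What is missing is a way to certify insolvability for all $U$ at once. The companion paper eliminates to a polynomial in mixed collision variables that factors over $\overline{\F_2}$ as $\prod_\theta F_\theta$, with $\theta$ ranging over the roots of $P'_{a^q}$. The argument written out here for $H_a$ (proof of Theorem~\ref{thm:no_root_iff_Permutation}) shows how the no-rational-root case closes. Linear independence of $1,\lambda,a\lambda^q+\lambda^{q+1}$ over $\F_{2^m}$ splits a rational zero of a factor into separate equations, one of which has the form $t^q+t+1=0$. Alternatively, you could compute a generator $c_U$ of $\ker\ell_U^*$ for the trace form and show $\Tr(\langle c_U,G_a(U)\rangle)=1$ for all $U\neq 0$. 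Your proposal defers exactly this step (``we expect the solvability condition to collapse\dots''), so the forward direction is not proved.

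\textbf{The reduction to zeros is not valid in general.} Reducing to $G_a^{-1}(0)=\{0\}$ fails even for maps homogeneous of degree $q+1$ with $\gcd(q+1,2^m-1)=1$. For example, $F(x,y)=(x^{q+1},\,x^qy+y^{q+1})$ on $\F_{2^m}^2$ has only the trivial zero, yet $F(1,0)=F(1,1)$. For $G_a$ this equivalence holds only a posteriori, as a consequence of the theorem.

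\textbf{Your converse is correct.} It is explicit and, unlike a Lang--Weil argument, needs no largeness assumption on $m$. With $y=1$ and $z=x^{-q}$ the second component vanishes, and the first and third components both reduce to
\[
x^{q^2+q+1}+ax^{q^2}+1=0,
\]
so the third component imposes nothing new. This polynomial is neither $P_a$ nor $P_a'$. To relate it to $Q_a$, substitute $x=S^{1-q}$, which turns it into $S^{q^3}+aS^{q^2}+S=0$. This $\F_2$-linear map has the same kernel dimension as its trace-adjoint, whose $q^3$-th power is $L_{a^q}(v)=v^{q^3}+a^qv^q+v$. Proposition~\ref{prop:linearized_equiv} and Step~3 of Proposition~\ref{prop:poly_equivalences} then finish the argument. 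Citing the $S_a$ part of that proposition would also work, since the reciprocal of your polynomial is $S_{a^{1/q}}$. However, that part's printed proof uses $u^{q^3}=u$, which fails for general $u\in\F_{2^m}$.
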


The proof in~\cite{BPST25} proceeds by analyzing the collision system $G_a(x+\alpha,y+\beta,z+\gamma)=G_a(x,y,z)$ for $(\alpha,\beta,\gamma)\neq(0,0,0)$. After algebraic elimination one reduces to a homogeneous polynomial $L(x,y,\alpha,\beta)$ of degree $q^3+2q^2+2q+1$ in the variables $(x,y,\alpha,\beta)$ (with $\gamma$ eliminated via one of the equations), which factors as
\[
L(x,y,\alpha,\beta) = \prod_{\theta\in\Theta} F_\theta(x,y,\alpha,\beta),
\]
where $\Theta=\{\theta\in\overline{\F_2}: \theta^{q^2+q+1}+a^q\theta^{q^2+q}+1=0\}$ is the root set of $P_{a^q}'$ (root-equivalent to $Q_a$ by Proposition~\ref{prop:poly_equivalences}), and each factor $F_\theta$ is shown to be absolutely irreducible by the Jacobian criterion. The three cases $\Theta\cap\F_{2^m}\neq\emptyset$, $\Theta\subseteq\F_{2^{2m}}\setminus\F_{2^m}$, and $\Theta\cap(\F_{2^{2m}}\setminus\F_{2^m})\neq\emptyset$ are handled separately using the Lang--Weil bound (Theorem~\ref{thm:lang_weil}) and the Aubry--McGuire--Rodier lemma (Lemma~\ref{lem:aubry}) to guarantee the existence of $\F_{2^m}$-rational points on~$F_\theta$ in the first and third cases, yielding a non-trivial collision and hence non-permutation. 
In particular, in the rational-root case $\Theta\cap\F_{2^m}\neq\emptyset$, the same geometric argument applied to the corresponding differential slice yields, for sufficiently large $m$, more than the two trivial solutions in one direction, and hence failure of the APN property.
We refer to~\cite{BPST25} for a related argument; the proof there is self-contained and complete.


The following classical fact will be needed several times.

\begin{lemma}[Trace Criterion]
\label{lem:trace}
Let $\gcd(i,m)=1$. The Artin--Schreier equation $t^q+t=1$ has no solution in $\F_{2^m}$ if and only if $m$ is odd.
\end{lemma}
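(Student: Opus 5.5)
The approach is to reduce the Artin--Schreier equation to the absolute trace. Write $\Tr$ for the absolute trace $\F_{2^m}\to\F_2$.

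\textbf{Step 1: the map $t\mapsto t^q+t$.} This map is $\F_2$-linear on $\F_{2^m}$. Its kernel consists of the $t$ with $t^q=t$, i.e.\ $t\in\F_{2^m}\cap\F_{q}=\F_{2^{\gcd(i,m)}}=\F_2$. So the kernel has size $2$, and the image $\mathrm{Im}$ is an $\F_2$-hyperplane of $\F_{2^m}$.

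\textbf{Step 2: identifying the image.} For any $t$ we have $\Tr(t^q)=\Tr(t)$, since $q$-th powering is a power of Frobenius. Hence $\Tr(t^q+t)=0$, so $\mathrm{Im}\subseteq\ker\Tr$. Both are hyperplanes ($\Tr$ is surjective), so $\mathrm{Im}=\ker\Tr$. Therefore $t^q+t=c$ is solvable in $\F_{2^m}$ if and only if $\Tr(c)=0$.

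\textbf{Step 3: conclusion.} Apply this with $c=1$. Since $\Tr(1)=m\cdot 1=m \bmod 2$, the equation $t^q+t=1$ has no solution exactly when $\Tr(1)=1$, i.e.\ when $m$ is odd. This gives both directions at once.

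The only point needing care is the kernel computation $\F_{2^m}\cap\F_{2^i}=\F_{2^{\gcd(i,m)}}$, where the hypothesis $\gcd(i,m)=1$ enters. Without this hypothesis the image would be strictly smaller than $\ker\Tr$ and the criterion would change. Everything else is routine linear algebra over $\F_2$.
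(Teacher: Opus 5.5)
Your proof is correct and follows the same route as the paper: both reduce solvability of $t^q+t=c$ to the condition $\Tr(c)=0$ (with $\gcd(i,m)=1$ making the absolute trace the relevant one) and then use $\Tr(1)=m \bmod 2$. The only difference is that you prove the solvability criterion yourself via the kernel/image hyperplane count, whereas the paper simply quotes it as a standard fact.
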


\begin{proof}
The equation $t^q+t=c$ is solvable in $\F_{2^m}$ if and only if $\Tr_{\F_{2^m}/\F_{2^k}}(c)=0$, where $k=\gcd(i,m)=1$, so the relevant trace is $\Tr_{\F_{2^m}/\F_2}(c)=\sum_{j=0}^{m-1}c^{2^j}$. For $c=1\in\F_2$, this equals $\sum_{j=0}^{m-1}1=m\pmod{2}$. Hence $\Tr(1)=1\neq 0$ when $m$ is odd, meaning no solution exists; when $m$ is even, $\Tr(1)=0$ and solutions do exist.
Hence the claim follows.
\end{proof}

\subsection{The equivalence between APN and permutation property}
\label{subsec:APN-perm-equivalence}

We now show that, for the family $G_a$, the APN property is equivalent to the
permutation property, and both are governed by the same univariate root condition.
Recall that
\[
G_a(x,y,z)=\bigl(x^{q+1}+ax^qz+yz^q,\; x^qz+y^{q+1},\; xy^q+ay^qz+z^{q+1}\bigr),
\]
where $q=2^i$, $\gcd(i,m)=1$, $m$ is odd, and $a\in\F_{2^m}^*$, and that
\[
Q_a(T)=T^{q^2+q+1}+aT+1,
\]
and denote \[
D_{\mathbf u}G_a(\mathbf x):=G_a(\mathbf x+\mathbf u)+G_a(\mathbf x)+G_a(\mathbf u).
\]

\begin{theorem}
\label{thm:Ga_normal_form_differential}
Let $m\ge 3$ be odd, $q=2^i$, $\gcd(i,m)=1$, and $a\in\F_{2^m}^*$.
For every nonzero direction $\mathbf{d}=(A,B,C)\in\F_{2^m}^3\setminus\{\mathbf{0}\}$,
the kernel size $|\ker D_{\mathbf{d}}G_a|$ satisfies the following.
\begin{enumerate}
\item[\textup{(a)}] \textbf{Axis directions.}
If exactly one of $A,B,C$ is nonzero, then $|\ker D_{\mathbf{d}}G_a|=2$.
\item[\textup{(b)}] \textbf{Type~$1$} ($C=0$, $AB\neq 0$).
$|\ker D_{(A,B,0)}G_a|=2^m$ if $Q_{a^q}(A/B)=0$, and $=2$, otherwise.
\item[\textup{(c)}] \textbf{Type~$2$} ($B=0$, $AC\neq 0$).
$|\ker D_{(A,0,C)}G_a|=2^m$ if $Q_a(C/A)=0$, and $=2$, otherwise.
Equivalently, for $(0,B,C)$ with $BC\neq 0$: the kernel has size $2^m$ iff $Q_a(C/B)=0$,
and $=2$, otherwise.
\item[\textup{(d)}] \textbf{Type~$3$} ($ABC\neq 0$). 
$|\ker D_{(A,B,C)}G_a|\geq 2^m$ if $Q_a$ has roots in $\mathbb{F}_{2^m}$, and $=2$, otherwise.

\end{enumerate}
\end{theorem}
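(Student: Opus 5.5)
Since $G_a$ is quadratic, $D_{\mathbf d}G_a$ is $\F_2$-linear in $\mathbf x$. Its kernel always contains $\{\mathbf 0,\mathbf d\}$, so it suffices to solve the linear system $D_{\mathbf d}G_a(x,y,z)=0$ explicitly. Writing $\mathbf d=(A,B,C)$ and expanding each component, we get
\begin{align*}
x^qA+xA^q+a(x^qC+A^qz)+yC^q+Bz^q&=0,\\
x^qC+A^qz+y^qB+yB^q&=0,\\
xB^q+Ay^q+a(y^qC+B^qz)+z^qC+zC^q&=0.
\end{align*}
The plan is to handle the directions by how many of $A,B,C$ vanish. In each case we eliminate variables with the identities $uv^q+u^qv=v^{q+1}\,((u/v)^q+u/v)$ and then use Lemma~\ref{lem:trace}, which applies because $m$ is odd and $\gcd(i,m)=1$. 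Since $t\mapsto t^q+t$ has kernel $\F_2$ on $\F_{2^m}$, each such equation pins down one variable up to two choices.

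\textbf{Axis directions.} Take $\mathbf d=(A,0,0)$. The second equation gives $A^qz=0$, so $z=0$. The third then gives $Ay^q=0$, so $y=0$. The first reduces to $x^qA+xA^q=0$, so $x/A\in\F_2$, and the kernel has size $2$. The direction $(0,0,C)$ follows from the symmetry $\sigma$. For $(0,B,0)$, the first equation forces $z=0$ and the second forces $y/B\in\F_2$; the third then forces $x=0$ (when $y=0$) or a contradiction, giving size $2$.

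\textbf{Two nonzero coordinates.} For types 1 and 2, and the $(0,B,C)$ variant obtained via $\sigma$, I will normalize, e.g.\ by scaling $x=At_1,\dots$. The point is that the system becomes a triangular system in which one variable is determined up to $\F_2$ by an Artin--Schreier equation. The remaining compatibility condition is then a single $\F_2$-linear functional relation. After substitution this relation collapses either to a contradiction, forcing the trivial kernel of size $2$, or to a full-dimensional $\F_{2^m}$-family, giving size $2^m$. The coefficient controlling which happens should be exactly $Q_{a^q}(A/B)$, respectively $Q_a(C/A)$. I expect to need Proposition~\ref{prop:poly_equivalences} (the $P'$/$S$ forms and Frobenius twists) to recognize the eliminant as $Q$ evaluated at the right ratio. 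This bookkeeping is the main computational step: I would eliminate $z$ (resp.\ $y$) first, raise to $q$-th powers using $u^{q^3}=u$ to reach a norm-like expression $T^{q^2+q+1}$, and then compare the result with $Q$.

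\textbf{Type~3.} This is the main obstacle. My approach is to eliminate $x$ and $z$ to obtain a single linearized polynomial equation in $y$ over $\F_{2^m}$ of $q$-degree at most $3$. Its coefficients factor through norms of the ratios $A/B$ and $C/B$, multiplied by a factor of the form $Q_a(\text{ratio})$. If $Q_a$ has no root in $\F_{2^m}$, then the leading obstruction is nonzero, and the Artin--Schreier argument (Lemma~\ref{lem:trace}) leaves only $y\in\{0,B\}$, so the kernel has size $2$. If instead $Q_a$ has a root, I would not aim for an exact count. Rather, I would show that for suitable directions the system degenerates as in types 1–2, or else invoke the geometric argument quoted after Theorem~\ref{thm:perm_char} (the factorization of $L$ and Lang--Weil via Theorem~\ref{thm:lang_weil}) to produce many kernel elements, which gives the stated lower bound $\geq 2^m$. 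Should the direct elimination become unwieldy, I would fall back on the fact that a quadratic permutation's derivative kernels relate to collisions. Theorem~\ref{thm:perm_char} shows that nonexistence of roots of $Q_a$ forbids collisions, and I would try to transfer that to nontrivial kernels in the Type~3 case.
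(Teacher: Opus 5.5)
\textbf{The real gap is Type~3.} You predict that eliminating $x$ and $z$ leaves an obstruction of the form ``$Q_a$ at a ratio of $A,B,C$, times norms'', and that root-freeness of $Q_a$ makes it nonzero. That is not the structure. With the wrong obstruction, the step ``no root $\Rightarrow y\in\{0,B\}$'' is unsupported.

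In the paper one eliminates $z,z^q$ using (E2), then eliminates $x^{q^2}$ twice. This gives two equations $R_3,R_4$ that are linear in $x,x^q,y,y^q$. One can eliminate $x$ and reach $B^{q^2}y^q+B^qy^{q^2}=0$ exactly when $R_3,R_4$ are independent. The dependence locus is the ternary form of degree $q^2+q+1$
\[
H(A,B,C)=\prod_{\lambda:\,Q_a(\lambda)=0}\bigl(A+B\lambda+C(\lambda^{q+1}+a)\bigr).
\]
The root condition enters only through the fact that $H$ has no nonzero $\F_{2^m}$-zero when $Q_a$ has no $\F_{2^m}$-root. Equivalently, $1,\lambda,\lambda^{q+1}+a$ are $\F_{2^m}$-independent for every root $\lambda\notin\F_{2^m}$. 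To see this, suppose $\lambda^{q+1}=\beta\lambda+\alpha$ with $\alpha,\beta\in\F_{2^m}$. Raise to the $q$-th power, multiply by $\lambda$, and compare with $\lambda^{q^2+q+1}=a\lambda+1$. This forces $\alpha\beta^q=1$ and $\beta^{q^2+q+1}+a\beta^{q^2}+1=0$. So $1/\beta$ is a root of $S_{a^{1/q}}$, which is root-equivalent to $Q_a$ by Proposition~\ref{prop:poly_equivalences}.

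The directions where the elimination coefficients $A^{q+1}+aB^{q+1}+BC^q$ or $A^qC+B^{q+1}$ vanish also need separate treatment, and your plan does not provide it.

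For the root case, (d) cannot hold for every $ABC\neq0$. Nondegenerate directions with $H(A,B,C)\neq0$ still have kernel size $2$ even when $Q_a$ has roots. What is true is that when $H(A,B,C)=0$ the system collapses to the single equation $R_3$ on $\F_{2^m}^2$, whose $\F_2$-kernel has dimension $\ge m$.

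Your fallbacks do not replace this:
\begin{itemize}
\item Pointing to type~1--2 directions says nothing about directions with $ABC\neq0$.
\item Lang--Weil only produces points for large $m$ and gives no count of $2^m$.
\item There is no general transfer from ``permutation'' to ``APN'' for quadratic maps. Bijectivity only says $G_a(\mathbf d)\notin\operatorname{Im}D_{\mathbf d}G_a$, which gives no control on $\dim\ker D_{\mathbf d}G_a$; that implication is precisely the content of the theorem, so this route is circular.
\end{itemize}

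\textbf{Two further steps fail as written.}

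First, $\sigma$ is not a symmetry of $G_a$; the paper's proof uses it too. The second coordinate of $\sigma\circ G_a\circ\sigma$ is $xz^q+y^{q+1}$, not $x^qz+y^{q+1}$.
\begin{itemize}
\item For $(0,0,C)$ a direct check still gives size $2$: (E2) gives $x=0$, then (E1) gives $y=0$, then $z\in\{0,C\}$.
\item For $(0,B,C)$, eliminate directly: take $x^q$ from (E2) and $z^q$ from (E1), and substitute into the $q$-th power of (E3). The $y^{q^2}$-terms cancel, leaving $K\,(y^q+B^{q-1}y)=0$ with $B^qC\,K=B^{q^2+q+1}P_a^*(C/B)$.
\end{itemize}
So for $(0,B,C)$ the kernel has size $2^m$ iff $\frac{C}{B}\bigl((\frac{C}{B})^q+a\bigr)^{q+1}=1$. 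This differs pointwise from $Q_a(C/B)=0$; only the existential version survives, via Proposition~\ref{prop:poly_equivalences}.

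Second, the identity $u^{q^3}=u$ holds on $\F_{2^m}$ only when $m=3$. You do not need it: in types~1--2 the exponent $q^2+q+1$ arises directly from chaining the substitutions. For example, type~1 yields $[A^{q^2+q+1}B^{-(q^2+q)}+a^qA+B]\,z^q=0$.
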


\begin{proof}
We first derive the general kernel system.
Expanding $G_a(\mathbf{x}+\mathbf{d})+G_a(\mathbf{x})+G_a(\mathbf{d})=\mathbf{0}$
component by component and retaining only cross-terms (all pure-quadratic terms cancel
in characteristic~$2$), one obtains
\begin{align}
A^q x + (A+aC)x^q + C^q y + aA^q z + B z^q &= 0, \label{eq:E1gen}\tag{E1}\\
Cx^q + B^q y + By^q + A^q z &= 0, \label{eq:E2gen}\tag{E2}\\
B^q x + (A+aC)y^q + (aB^q+C^q)z + Cz^q &= 0. \label{eq:E3gen}\tag{E3}
\end{align}

\paragraph{Part~(a): Axis directions
 $(A,B,C)=(A,0,0)$.}
The system reduces to $A^qx+Ax^q+aA^qz=0$, $A^qz=0$, $Ay^q=0$.
Hence $z=0$, $y=0$, and $Ax^q=A^qx$ has precisely two solutions in $\mathbb{F}_{2^m}$, i.e., $|\ker D_{(1,0,0)}G_a|=2$. Similar arguments apply to the other two cases.

 \noindent
\paragraph{Part~(b): Type~1, $\mathbf{d}=(A,B,0)$ with $AB\neq 0$.}
Setting $C=0$ in \eqref{eq:E1gen}--\eqref{eq:E3gen}, implies
\begin{equation*}
A^q x + Ax^q + aA^q z + Bz^q = 0, \quad
B^q y + By^q + A^q z = 0, \quad
B^q x + Ay^q + aB^q z = 0.
\end{equation*}
From the third equation,
\[
x = AB^{-q}y^q + az.
\]
From the second equation,
\begin{equation}\label{eq:type2_yq}
y^q = B^{q-1}y + A^q B^{-1}z.
\end{equation}
Raising \eqref{eq:type2_yq} to the $q$-th power, $y^{q^2} = B^{q^2-q}y^q + A^{q^2}B^{-q}z^q$.
Substituting $x^q = A^qB^{-q^2}y^{q^2}+a^qz^q$ and all of the above into the first equation,
the coefficients of $y$ and~$z$ each vanish in characteristic~$2$
(each appears exactly twice with equal coefficients),
leaving the \emph{Master Equation}
\begin{equation}\label{eq:master_type2}
\bigl[A^{q^2+q+1}B^{-(q^2+q)}+Aa^q+B\bigr]z^q = 0.
\end{equation}
Dividing by $B\neq 0$ and setting $D:=A/B\in\F_{2^m}^*$, renders
$Q_{a^q}(D)\cdot z^q = 0$.
If $Q_{a^q}(A/B)\neq 0$, then $z=0$ is forced.
With $z=0$, Equation~\eqref{eq:type2_yq} gives $B^qy+By^q=0$, i.e., $(y/B)^q=y/B$,
so $y\in\F_2\cdot B$.  For each such~$y$, $x=AB^{-1}y$ is uniquely determined.
Hence $|\ker D_{(A,B,0)}G_a|=2$.
If $Q_{a^q}(A/B)=0$, Equation~\eqref{eq:master_type2} imposes no
condition on~$z$.  For each $z\in\F_{2^m}$, Equation~\eqref{eq:type2_yq}
is an Artin--Schreier equation in~$y$; since $\ker(y\mapsto y^q+y)=\F_2$,
exactly $2^{m-1}$ values of~$z$ yield two solutions~$y$
(and the other $2^{m-1}$ yield none), and each admissible pair $(y,z)$
determines a unique~$x$.  Thus, the kernel has size $2\cdot 2^{m-1}=2^m$.

 \noindent
\paragraph{Part~(c): Type~2, $\mathbf{d}=(A,0,C)$ with $AC\neq 0$.}
Setting $B=0$ in \eqref{eq:E1gen}--\eqref{eq:E3gen}, renders
\begin{equation*}
A^q x + (A+aC)x^q + C^q y + aA^q z = 0, \quad
Cx^q + A^q z = 0, \quad
(A+aC)y^q + C^qz + Cz^q = 0.
\end{equation*}
From the second equation, we get
\begin{equation}
\label{eq:type1_xq}
x^q = A^q C^{-1}z.
\end{equation}
We assume $A+aC\neq 0$ (if $A+aC=0$, substituting $A=aC$ into the first equation
determines $y$ from $x$ and $z$ directly, and the same conclusion holds).
From the third equation,
\begin{equation}
\label{eq:type1_yq}
y^q = (A+aC)^{-1}\bigl[C^qz+Cz^q\bigr].
\end{equation}
From the first equation, substituting~\eqref{eq:type1_xq},
\begin{equation}
\label{eq:type1_x}
x = (A/C)z + A^{-q}C^q y.
\end{equation}
Raising~\eqref{eq:type1_xq} to the $q$-th power, we get $x^{q^2}=A^{q^2}C^{-q}z^q$.
Raising~\eqref{eq:type1_x} to the $q^2$-th power and substituting~\eqref{eq:type1_yq},
\[
x^{q^2}
= \frac{A^{q^2}}{C^{q^2}}\,z^{q^2}
+ \frac{A^{-q^3}C^{q^3}}{(A+aC)^{q}}\bigl[C^{q^2}z^q+C^qz^{q^2}\bigr].
\]
Equating the two expressions for~$x^{q^2}$, the coefficient of~$z^{q^2}$ cancels
in characteristic~$2$, and the coefficient of~$z^q$ gives (after multiplying through
by $A^{q^3}C^q(A+aC)^q$), transforms into
\[
A^{q^2+q^3}(A+aC)^q = C^{q^3+q^2+q}.
\]
Writing $t:=C/A$ and dividing by $A^{q^3+q^2+q}$:
$(1+at)^q = t^{q(q^2+q+1)}$.
Taking $q$-th roots via the Frobenius bijection on~$\F_{2^m}$ (bijective
since $\gcd(i,m)=1$),
\[
1+at = t^{q^2+q+1}
\quad\Longleftrightarrow\quad
t^{q^2+q+1}+at+1=0,
\]
i.e., $Q_a(t)=0$ with $t=C/A$.

If $Q_a(C/A)\neq 0$, the coefficient of~$z^q$ in the consistency condition is
nonzero, yielding an equation $z^{q(q-1)}=R$ for some $R\in\F_{2^m}^*$.
Since $\gcd(q(q-1),2^m-1)=1$
(as $\gcd(q-1,2^m-1)=2^{\gcd(i,m)}-1=1$ and $\gcd(q,2^m-1)=1$),
the map $z\mapsto z^{q(q-1)}$ is a bijection on~$\F_{2^m}^*$, giving a unique
nonzero solution~$z_0$.  Note that $(A,0,C)$ is always in the kernel. Indeed, substituting $(x,y,z)=(A,0,C)$ gives
\[
\begin{cases}
A^qA+(A+aC)A^q+C^q\cdot 0+aA^qC=0,\\
CA^q+A^qC=0,\\
(A+aC)\cdot 0+C^qC+CC^q=0,
\end{cases}
\]
so $(A,0,C)\in \ker D_{(A,0,C)}G_a$. Thus,
$z_0=C$ and $|\ker D_{(A,0,C)}G_a|=2$.

If $Q_a(C/A)=0$, both the $z^q$- and $z^{q^2}$-coefficients in the consistency
equation vanish simultaneously, imposing no condition on~$z$; the Artin--Schreier
count gives $|\ker|=2^m$.
For direction $(0,B,C)$ with $BC\neq 0$: the $\sigma$-symmetry gives
$|\ker D_{(0,B,C)}G_a|=|\ker D_{(C,B,0)}G_a|$.
Part~(b) applied to $(C,B,0)$ shows the kernel exceeds size~$2$
iff $Q_{a^q}(C/B)=0$.
Since $Q_a$ and $Q_{a^q}$ are root-equivalent
(Proposition~\ref{prop:poly_equivalences}, Step~3),
this is equivalent to $Q_a(C/B)=0$.









\noindent
\paragraph{Part~(d): Type~3, $\mathbf{d}=(A,B,C)$ with $ABC\neq 0$.} 
Starting from \eqref{eq:E1gen}--\eqref{eq:E3gen}, we use \eqref{eq:E2gen} and its $q$-th power to eliminate $z$ and $z^q$, yielding 
\begin{align*}
R_1&: A^{q^2+2q} x + A^{q^2+q+1} x^q + A^q B C^q x^{q^2} + (a A^{q^2+q} B^q + A^{q^2+q} C^q) y \\
 &\quad + (a A^{q^2+q} B + A^q B^{q^2+1}) y^q + A^q B B^q y^{q^2} = 0, \\
R_2&: A^{q^2+q} B^q x + (a A^{q^2} B^q C + A^{q^2} C^{q+1}) x^q + A^q C^{q+1} x^{q^2} + (a A^{q^2} B^{2q} + A^{q^2} B^q C^q) y \\
 &\quad + (A^{q^2+q+1} + a A^{q^2+q} C + A^q B^{q^2} C + a A^{q^2} B B^q + A^{q^2} B C^q) y^q + A^q B^q C y^{q^2} = 0.
\end{align*}
Eliminating $x^{q^2}$ between $R_1$ and $R_2$ gives
\begin{align*}
R_3&: A^q(A^q C + B^{q+1}) x + C(A^{q+1} + a B^{q+1} + B C^q) x^q \\
 &\quad + (A^q C + B^{q+1})(a B^q + C^q) y + B(A^{q+1} + a B^{q+1} + B C^q) y^q = 0.
\end{align*}

\begin{enumerate}
    \item If $A^{q+1} + a B^{q+1} + B C^q = 0$ and $A^q C + B^{q+1} \neq 0$, then from $R_3$ one gets $A^q x = (a B^q + C^q) y$, which implies $B x = A y$. Substituting into $R_1$, we obtain:
    $$(A^{q^2+q+1} + a A^{q^2} B^{q+1} + B^{q^2+q+1})(B^{q^2} y^q + B^q y^{q^2}) = 0.$$
    This equation has either $2$ or $2^m$ solutions, depending on whether $Q_a(A/B)$ is zero or not. Consequently, the same holds for the entire system.

    \item If $A^{q+1} + a B^{q+1} + B C^q \neq 0$ and $A^q C + B^{q+1} = 0$, then $R_3$ implies $B(A^{q^2+q+1} + a A^{q^2} B^{q+1} + B^{q^2+q+1})(B^q x^q + A^q y^q) = 0$. Since $A^{q+1} + a B^{q+1} + B C^q \neq 0$ and $A^q C + B^{q+1} = 0$ implies $(A^{q^2+q+1} + a A^{q^2} B^{q+1} + B^{q^2+q+1}) \neq 0$, we have $B x = A y$. Substituting into $R_1$ yields:
    $$A^{q^2+q} B^{q^2}(A^{q^2+q+1} + a A^{q^2} B^{q+1} + B^{q^2+q+1})(B^q y + B y^q) = 0.$$
    As the middle term is non-zero, $y \in \{0, B\}$, resulting in exactly two solutions for the system.

    \item If $A^{q+1} + a B^{q+1} + B C^q = 0$ and $A^q C + B^{q+1} = 0$, then $A^{q^2+q+1} + a A^{q^2} B^{q+1} + B^{q^2+q+1} = 0$, which occurs only when $Q_a(A/B)$ vanishes. In this case, $R_1$ and $R_2$ collapse (up to non-zero factors) to
    $$A^{q^2+q} B^{q+1} x + A^{q^2+1} B^{q+1} x^q + B^{q^2+q+2} x^{q^2} + A^{q^2+q+1} B^q y + A^{q^2+q+1} B y^q + A^{q^2} B^{q^2+q+2} y^{q^2} = 0.$$
    Let $j_1$ and $j_2$ be the dimensions over $\mathbb{F}_2$ of the kernels of the linearized polynomials in $x$ and $y$. Since $\gcd(i, m) = 1$, $j_1, j_2 \leq 2$. The images have rank $m-j_1$ and $m-j_2$, meeting in an $\mathbb{F}_2$-space of dimension at least $m - j_1 - j_2$. For each element in this intersection, there are $2^{j_1}$ preimages for $x$ and $2^{j_2}$ for $y$, totaling at least $2^m$ pairs $(x, y)$.
\end{enumerate}

Thus, we assume henceforth that
\begin{equation}\label{Cond1}
A^{q+1} + a B^{q+1} + B C^q \neq 0 \neq A^q C + B^{q+1}.
\end{equation}
Eliminating $x^{q^2}$ between $R_1$ and $R_3^q$ gives
\begin{align*}
R_4&: (A^{2q^2+3q} C^q + a^q A^{q^2+2q} B^{q^2+q} C^q + A^{q^2+2q} B^q C^{q^2+q}) x \\
&\quad + (A^{2q^2+2q+1} C^q + a^q A^{q^2+q+1} B^{q^2+q} C^q + A^{q^2+q+1} B^q C^{q^2+q} + A^{2q^2+q} B C^{2q} + A^{q^2+q} B^{q^2+q+1} C^q) x^q \\
&\quad + (a A^{2q^2+2q} B^q C^q + A^{2q^2+2q} C^{2q} + a a^q A^{q^2+q} B^{q^2+2q} C^q + a A^{q^2+q} B^{2q} C^{q^2+q} \\
&\quad + a^q A^{q^2+q} B^{q^2+q} C^{2q} + A^{q^2+q} B^q C^{q^2+2q}) y + (a A^{2q^2+2q} B C^q + A^{q^2+2q} B B^{q^2} C^q\\
&\quad  + a a^q A^{q^2+q} B^{q^2+q+1} C^q + a A^{q^2+q} B^{q+1} C^{q^2+q} + a^q A^{q^2+q} B B^{q^2} C^{2q} + A^{q^2+q} B C^{q^2+2q}) y^q = 0.
\end{align*}
The equations $R_3$ and $R_4$ are linearly dependent if and only if:
\begin{align*}
H :=&\; A^{q^2+q+1} + a^q A B^{q^2+q} + A B^q C^{q^2} + a A^{q^2+q} C + A^q B^{q^2} C \\
    &\; + A^{q^2} B C^q + B^{q^2+q+1} + a^{q+1} B^{q^2+q} C + a B^q C^{q^2+1} + a^q B^{q^2} C^{q+1} + C^{q^2+q+1} = 0.
\end{align*}
Note that 
$$H(A,B,C) = \prod_{\lambda : Q_a(\lambda)=0} (A + B\lambda + C(\lambda^{q+1} + a)),$$
and thus there exist triples $(A,B,C) \in \mathbb{F}_{2^m}^3 \setminus \{(0,0,0)\}$ such that $H(A,B,C) = 0$ if and only if $Q_a$ has roots in $\mathbb{F}_{2^m}$.

\smallskip\noindent
\textbf{Case $H \neq 0$} ($R_3$ and $R_4$ linearly independent). 
Combining $R_3$ and $R_4$ to eliminate $x^q$ yields:
\begin{equation*}
A^q B^q x + (a B^{2q} + B^q C^q) y + (A^{q+1} + a B^{q+1} + B C^q) y^q = 0,
\end{equation*}
which expresses $x$ as an $\mathbb{F}_2$-linear combination of $y$ and $y^q$. Substituting this into $R_3$, the resulting equation factors as:
\begin{equation*}
C A^q (A^{q+1} + a B^{q+1} + B C^q)^{q+1} (B^{q^2} y^q + B^q y^{q^2}) = 0.
\end{equation*}
Given that $C \neq 0$ and $A^q \neq 0$, at least one of the remaining factors must vanish. The third factor $(A^{q+1} + a B^{q+1} + B C^q)$ is non-zero by condition \eqref{Cond1}. Thus, we must have:
\begin{equation*}
B^{q^2} y^q + B^q y^{q^2} = 0,
\end{equation*}
which implies $(y/B)^{q^2-q} = 1$. Since $\gcd(q^2-q, 2^m-1) = 1$ (as $\gcd(i, m) = 1$), this equation yields $y \in \{0, B\}$. For each such $y$, \eqref{eq:E2gen} uniquely determines $z$, and subsequently \eqref{eq:E3gen} uniquely determines $x$. This results in $|\ker D_{(A,B,C)} G_a| = 2$.

\noindent
\paragraph{Case $H = 0$ (which is equivalent to $Q_a$ having roots in $\mathbb{F}_{2^m}$)}. 
In this case, $R_3$ and $R_4$ are linearly dependent, so only $R_3$ remains. As analyzed previously, $R_3$ provides at least $2^m$ solutions $(x, y)$ to the system.
\end{proof}

\begin{corollary}
\label{cor:Qa_rootfree_all_diffkernels_size2}
The polynomial $Q_a$ has no root in $\F_{2^m}^*$ if and only if 
\[
|\ker D_{\mathbf d}G_a|=2
\qquad\text{for every nonzero }\mathbf d\in\F_{2^m}^3.
\]
\end{corollary}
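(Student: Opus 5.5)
The plan is to read the corollary off Theorem~\ref{thm:Ga_normal_form_differential} by running through its four direction types, using Proposition~\ref{prop:poly_equivalences} to handle the change of parameter from $a$ to $a^q$. Since $Q_a(0)=1$, having a root in $\F_{2^m}$ is the same as having a root in $\F_{2^m}^*$, so the two formulations agree throughout.

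\emph{($\Rightarrow$)} Assume $Q_a$ has no root in $\F_{2^m}^*$. By Step~3 of Proposition~\ref{prop:poly_equivalences}, $Q_{a^q}$ also has no root in $\F_{2^m}^*$. Now take any nonzero $\mathbf d=(A,B,C)$; it falls into exactly one of the classes (a)--(d) of Theorem~\ref{thm:Ga_normal_form_differential}. Axis directions give kernel size $2$ unconditionally. For Type~1, the ratio $A/B$ lies in $\F_{2^m}^*$ and cannot be a root of $Q_{a^q}$, so the kernel has size $2$. For Type~2, $C/A$ cannot be a root of $Q_a$; the variant $(0,B,C)$ is handled by the $\sigma$-symmetry together with part~(c), and again gives size $2$. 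For Type~3, part~(d) yields size $2$ directly, because $Q_a$ has no roots. Hence every kernel has size exactly $2$.

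\emph{($\Leftarrow$)} I argue by contraposition. Suppose $Q_a(t)=0$ for some $t\in\F_{2^m}^*$. Choose $A=1$, $B=0$, $C=t$. Then $\mathbf d=(1,0,t)$ is a Type~2 direction with $Q_a(C/A)=0$, so part~(c) gives $|\ker D_{\mathbf d}G_a|=2^m>2$ because $m\ge 3$. As an alternative route, one could use a root $s$ of $Q_{a^q}$, whose existence Proposition~\ref{prop:poly_equivalences} guarantees, and the Type~1 direction $(s,1,0)$ in part~(b). Either direction witnesses the failure of the condition.

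No step here poses a real obstacle, since the corollary is essentially bookkeeping. The one point needing care is the mismatch of parameters between parts~(b) and~(c), where one involves $a^q$ and the other $a$. This is resolved by the root-equivalence of $Q_a$ and $Q_{a^q}$, and because the Frobenius map is bijective on $\F_{2^m}^*$, the condition ``no root in $\F_{2^m}^*$'' transfers exactly between the two polynomials.
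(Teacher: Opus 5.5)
Your route is the one the paper intends: the corollary is read off type by type from Theorem~\ref{thm:Ga_normal_form_differential}, with Step~3 of Proposition~\ref{prop:poly_equivalences} handling the $a$ versus $a^q$ mismatch. Your converse witness $(1,0,t)$ is sound: for $B=0$ the system collapses to $Q_a(C/A)\,A^{q^2+q+1}C^{-q^2}\bigl((x/A)^q+(x/A)^{q^2}\bigr)=0$, with $y,z$ determined by $x$.

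The step that does not hold up is the one you use for directions $(0,B,C)$, because the $\sigma$-symmetry $\sigma\circ G_a\circ\sigma=G_a$ is false. For instance, the second coordinate of $G_a(z,y,x)$ is $xz^q+y^{q+1}$, not $x^qz+y^{q+1}$, and the outer coordinates fail similarly. So your reduction of $(0,B,C)$ to $(C,B,0)$ is not valid. The paper's derivation of that clause of part~(c) fails for the same reason. Moreover, the correct pointwise criterion there is $P_a^*(C/B)=0$ rather than $Q_a(C/B)=0$.

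The repair is a short direct computation. Since $G_a(\lambda\mathbf x)=\lambda^{q+1}G_a(\mathbf x)$, one has $\ker D_{\lambda\mathbf d}G_a=\lambda\ker D_{\mathbf d}G_a$. It therefore suffices to take $\mathbf d=(0,1,c)$ with $c\neq 0$. Then \eqref{eq:E2gen} and \eqref{eq:E1gen} give $x^q=c^{-1}(y+y^q)$ and $z^q=(a+c^q)y+ay^q$. Substituting these into the $q$-th power of \eqref{eq:E3gen}, the $y^{q^2}$-terms cancel and what remains is
\[
\bigl(c^{-1}+(c^q+a)^{q+1}\bigr)\,(y+y^q)=0 .
\]
Thus the kernel is $\{\mathbf 0,(0,1,c)\}$ unless $P_a^*(c)=c(c^q+a)^{q+1}+1=0$, in which case it has size $2^m$. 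By Steps~1--2 of Proposition~\ref{prop:poly_equivalences}, $P_a^*$ is root-equivalent to $Q_a$. Hence root-freeness of $Q_a$ still forces every $(0,B,C)$-kernel to have size $2$. Your forward direction and the corollary therefore survive, provided this case is argued directly rather than via $\sigma$. The Type~3 case you take from part~(d) exactly as the paper does.
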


Now, we put all of these together to show our first main result.
\begin{theorem}
\label{thm:APN_perm_equiv}
Let $m\ge 3$ be odd, $q=2^i$, $\gcd(i,m)=1$,  $a\in\F_{2^m}^*$, and
$G_a(x,y,z)=\bigl(x^{q+1}+ax^qz+yz^q,\; x^qz+y^{q+1},\; xy^q+ay^qz+z^{q+1}\bigr)$ on $\F_{2^{3m}}$.
The following are equivalent:
\begin{enumerate}
\item[\textup{(i)}] $Q_a(T)=T^{q^2+q+1}+aT+1$ has no root in $\F_{2^m}$.
\item[\textup{(ii)}] $G_a$ is a permutation on $\F_{2^m}^3$.
\item[\textup{(iii)}] $G_a$ is APN on $\F_{2^m}^3$.
\end{enumerate}
\end{theorem}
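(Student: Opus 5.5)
The plan is to route everything through the root condition (i). The equivalence (i)$\Leftrightarrow$(ii) is exactly Theorem~\ref{thm:perm_char}, quoted from~\cite{BPST25}, so nothing new is needed there. The work is therefore in (i)$\Leftrightarrow$(iii), and for this I will use Theorem~\ref{thm:Ga_normal_form_differential} together with Corollary~\ref{cor:Qa_rootfree_all_diffkernels_size2}.

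\emph{Quadratic reduction.} Since $G_a$ is quadratic, for each nonzero $\mathbf d$ the map $\mathbf x\mapsto D_{\mathbf d}G_a(\mathbf x)=G_a(\mathbf x+\mathbf d)+G_a(\mathbf x)+G_a(\mathbf d)$ is $\F_2$-linear (up to the constant $G_a(0)=0$). Consequently the equation $G_a(\mathbf x+\mathbf d)+G_a(\mathbf x)=\mathbf b$ has either $0$ or exactly $|\ker D_{\mathbf d}G_a|$ solutions. Hence $G_a$ is APN if and only if $|\ker D_{\mathbf d}G_a|=2$ for every $\mathbf d\neq\mathbf 0$. The kernel always contains $\{\mathbf 0,\mathbf d\}$, so the condition is equivalent to the kernel being no larger than this.

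\emph{(i)$\Rightarrow$(iii).} If $Q_a$ has no root in $\F_{2^m}$, I check the four cases of Theorem~\ref{thm:Ga_normal_form_differential}.
\begin{itemize}
\item In part~(a) (axis directions) the kernel has size $2$ unconditionally.
\item In part~(b) the kernel has size $2$ unless $Q_{a^q}(A/B)=0$. By Step~3 of Proposition~\ref{prop:poly_equivalences}, $Q_{a^q}$ has a root in $\F_{2^m}^*$ if and only if $Q_a$ does, so this cannot happen.
\item In part~(c) the kernel has size $2$ unless $Q_a$ vanishes at $C/A$ or $C/B$, which is excluded.
\item In part~(d) the kernel has size $2$ when $Q_a$ has no roots.
\end{itemize}
So every derivative kernel has size $2$, and $G_a$ is APN. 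This is precisely Corollary~\ref{cor:Qa_rootfree_all_diffkernels_size2}.

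\emph{(iii)$\Rightarrow$(i).} I argue by contraposition. Suppose $Q_a(t_0)=0$ with $t_0\in\F_{2^m}^*$; note $t_0\neq 0$ since $Q_a(0)=1$. Take the Type~2 direction $\mathbf d=(1,0,t_0)$. By part~(c), $|\ker D_{\mathbf d}G_a|=2^m>2$ because $m\ge 3$, so $G_a$ is not APN. This direct choice of direction avoids needing part~(d) in this implication.

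\emph{Main obstacle.} The one delicate point is that part~(c) assumed $A+aC\neq 0$, i.e.\ $1+at_0\neq 0$. If instead $1+at_0=0$, then $Q_a(t_0)=t_0^{q^2+q+1}=0$, forcing $t_0=0$, a contradiction. So the assumption holds automatically for a root, and the kernel computation in part~(c) applies without the parenthetical caveat. With this checked, combining the two implications with Theorem~\ref{thm:perm_char} gives (i)$\Leftrightarrow$(ii)$\Leftrightarrow$(iii).
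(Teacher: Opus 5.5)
Your proposal is correct and follows the paper's proof: (i)$\Leftrightarrow$(ii) is quoted from Theorem~\ref{thm:perm_char}, (i)$\Rightarrow$(iii) is Corollary~\ref{cor:Qa_rootfree_all_diffkernels_size2} together with the $\F_2$-linearity of $D_{\mathbf d}G_a$, and (iii)$\Rightarrow$(i) is the contrapositive via a derivative kernel of size $>2$ from Theorem~\ref{thm:Ga_normal_form_differential}. Your explicit direction $\mathbf d=(1,0,t_0)$, with the check $1+at_0\neq 0$, makes the paper's one-line appeal precise and is sound, since the kernel is exactly $\{(u,\,(u+u^q)/t_0^{q},\,t_0u^q):u\in\F_{2^m}\}$; one caveat is that for directions $(0,B,C)$ the exact degeneracy condition is $(C/B)\bigl((C/B)^q+a\bigr)^{q+1}+1=0$ (a root of the reciprocal $P_a^*$) rather than $Q_a(C/B)=0$ as stated in part~(c), but this does not affect your (i)$\Rightarrow$(iii) because $P_a^*$ and $Q_a$ are root-equivalent by Proposition~\ref{prop:poly_equivalences}.
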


\begin{proof}
By~\cite[Theorem~4.2]{BPST25} (equivalently, Theorem~\ref{thm:perm_char}),
\textup{(i)} and \textup{(ii)} are equivalent. It remains to prove the equivalence
between \textup{(i)} and \textup{(iii)}.

\smallskip\noindent
\textbf{\textup{(iii)}$\Rightarrow$\textup{(i)}.}
We prove the contrapositive. If $Q_a$ has a root in $\F_{2^m}$, then
$G_a$ is not APN since $|\ker D_{\mathbf d}G_a|>2$.

\smallskip\noindent
\textbf{\textup{(i)}$\Rightarrow$\textup{(iii)}.}
Assume that $Q_a$ has no root in $\F_{2^m}$. By Corollary \ref{cor:Qa_rootfree_all_diffkernels_size2}, $|\ker D_{\mathbf d}G_a|=2$ for any ${\mathbf d}$. Thus the number of solutions of $$D_{\mathbf d}G_a(x,y,z)=(\alpha,\beta,\gamma)$$
is at most $2$ for each $\alpha,\beta,\gamma\in \mathbb{F}_{2^m}$ and $G_a$ is APN.

\end{proof}

\subsection{Existence of APN permutations in every odd dimension divisible by $3$}

We now show that good values of $a$ (those for which $Q_a$ has no roots) exist for every odd $m$.
\begin{proposition}
\label{prop:linearized_equiv}
Let $m\geq 3$ be odd, $q=2^i$ with $\gcd(i,m)=1$, and $a\in\F_{2^m}^*$.
Define the linearized polynomial
\[
L_a(S) := S^{q^3} + aS^q + S \in \F_{2^m}[S].
\]
Then the following hold:
\begin{enumerate}
\item If $L_a$ has a nonzero root in $\F_{2^m}$, then $Q_a(T)=T^{q^2+q+1}+aT+1$ has a root in $\F_{2^m}^*$.
\item If $\gcd(i,m)=1$ (equivalently, $\gcd(q-1,2^m-1)=1$), then the converse also holds.
Hence in this case,
\[
Q_a \text{ has a root in } \F_{2^m}^*
\iff
L_a \text{ has a nonzero root in } \F_{2^m}.
\]
\end{enumerate}
\end{proposition}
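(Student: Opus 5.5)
The plan is to relate the two polynomials through the substitution $T=S^{q-1}$. The key observation is the factorization
\[
L_a(S)=S^{q^3}+aS^q+S=S\bigl(S^{q^3-1}+aS^{q-1}+1\bigr),
\]
together with the exponent identity $(q-1)(q^2+q+1)=q^3-1$. Together they give the polynomial identity
\[
L_a(S)=S\cdot Q_a\bigl(S^{q-1}\bigr)\qquad\text{in }\F_{2^m}[S],
\]
which I would verify first by direct expansion.

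For part 1, take a nonzero root $S_0\in\F_{2^m}$ of $L_a$. Since $S_0\neq 0$, the identity forces $Q_a(S_0^{q-1})=0$. Moreover $T_0:=S_0^{q-1}$ lies in $\F_{2^m}^*$, so $T_0$ is the required root of $Q_a$. No hypothesis on $\gcd(i,m)$ is needed for this direction.

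For part 2, I would first recall the standard fact $\gcd(2^i-1,2^m-1)=2^{\gcd(i,m)}-1$. This shows that $\gcd(i,m)=1$ is equivalent to $\gcd(q-1,2^m-1)=1$, which justifies the parenthetical in the statement. Under this hypothesis, the power map $S\mapsto S^{q-1}$ is a bijection of the cyclic group $\F_{2^m}^*$. So, given a root $T_0\in\F_{2^m}^*$ of $Q_a$, there is a unique $S_0\in\F_{2^m}^*$ with $S_0^{q-1}=T_0$. The identity then gives $L_a(S_0)=S_0Q_a(T_0)=0$ with $S_0\neq0$. Combining this with part 1 yields the stated equivalence.

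There is no genuine obstacle here. The only point needing care is the surjectivity of $S\mapsto S^{q-1}$ onto $\F_{2^m}^*$, and that is exactly where the coprimality hypothesis enters.
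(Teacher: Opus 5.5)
Your proposal is correct and follows essentially the same route as the paper: the substitution $T=S^{q-1}$ with $(q-1)(q^2+q+1)=q^3-1$ for part 1, and surjectivity of $S\mapsto S^{q-1}$ on $\F_{2^m}^*$ (from $\gcd(q-1,2^m-1)=1$) for part 2. Your remark that part 1 needs no coprimality hypothesis is accurate; the paper invokes the bijection there even though it is not needed.
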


\begin{proof}
(1) Assume $S\in \F_{2^m}^*$ satisfies $L_a(S)=0$, i.e.,
$S^{q^3} + aS^q + S = 0$.
Since $\gcd(i,m)=1$, the map $S\mapsto S^{q-1}$ is a bijection of $\F_{2^m}^*$.
Set
$T := S^{q-1} \in \F_{2^m}^*$.
Dividing $L_a(S)=0$ by $S$ gives $S^{q^3-1} + aS^{q-1} + 1 = 0$.
Note that $S^{q^3-1} = (S^{q-1})^{q^2+q+1} = T^{q^2+q+1}$.
Thus
$T^{q^2+q+1} + aT + 1 = 0$,
i.e., $Q_a(T)=0$. Hence $Q_a$ has a root in $\F_{2^m}^*$.

(2) Conversely, assume $\gcd(i,m)=1$ and let $T\in \F_{2^m}^*$ satisfy $Q_a(T)=0$, i.e.,
$T^{q^2+q+1} + aT + 1 = 0$.
Since $\gcd(q-1,2^m-1)=\gcd(2^i-1,2^m-1)=2^{\gcd(i,m)}-1=1$,
the map $S\mapsto S^{q-1}$ is a bijection of $\F_{2^m}^*$.
Therefore there exists $S\in \F_{2^m}^*$ such that
$S^{q-1} = T$.
Then $T^{q^2+q+1} = (S^{q-1})^{q^2+q+1} = S^{q^3-1}$.
Substituting into $Q_a(T)=0$ gives
$S^{q^3-1} + aS^{q-1} + 1 = 0$.
Multiplying by $S$ yields
$S^{q^3} + aS^q + S = 0$,
i.e., $L_a(S)=0$. Thus $L_a$ has a nonzero root in $\F_{2^m}$.
This proves the claims.
\end{proof}

\begin{proposition}
\label{prop:a_equals_1}
Let $m$ be odd with $\gcd(i,m)=1$. The polynomial $Q_1(T)=T^{q^2+q+1}+T+1$ has no roots in $\F_{2^m}$ if and only if $\gcd(m,7)=1$.
\end{proposition}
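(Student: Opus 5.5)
The plan is to pass to the linearized polynomial of Proposition~\ref{prop:linearized_equiv}. Since $Q_1(0)=1$, having a root in $\F_{2^m}$ is the same as having a root in $\F_{2^m}^*$. Because $\gcd(i,m)=1$, Proposition~\ref{prop:linearized_equiv} then says $Q_1$ has a root in $\F_{2^m}$ if and only if
\[
L_1(S)=S^{q^3}+S^q+S
\]
has a nonzero root in $\F_{2^m}$. So the statement reduces to: $L_1$ has a nonzero root in $\F_{2^m}$ if and only if $7\mid m$.

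The key observation is that $L_1=f(\varphi)$, where $f(X)=X^3+X+1\in\F_2[X]$ and $\varphi\colon x\mapsto x^q$ acts $\F_2$-linearly on $\F_{2^m}$. The polynomial $f$ is irreducible over $\F_2$ and divides $X^7+1$, since its roots have order $7$ in $\F_8^*$. Hence $\varphi^7$ fixes every $S\in\ker L_1$.

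For the direction "$7\nmid m$ implies no nonzero root": the fixed field of $\varphi^7=(x\mapsto x^{2^{7i}})$ on $\F_{2^m}$ is $\F_{2^{\gcd(7i,m)}}$. If $7\nmid m$ and $\gcd(i,m)=1$, then $\gcd(7i,m)=1$, so any root $S$ lies in $\F_2$. But $L_1(1)=1+1+1=1\neq 0$, so $S=0$. Hence $Q_1$ has no roots in this case.

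For the converse, $7\mid m$, it suffices to exhibit a nonzero root in $\F_{2^7}\subseteq\F_{2^m}$. I would use the normal basis theorem. It gives $\F_{2^7}\cong\F_2[X]/(X^7+1)$ as $\F_2[X]$-modules, with $X$ acting as the absolute Frobenius $x\mapsto x^2$. Since $\gcd(i,7)=1$, the map $X\mapsto X^i$ induces a ring automorphism of the group algebra $\F_2[C_7]=\F_2[X]/(X^7+1)$. So $\F_{2^7}$ with $X$ acting via $\varphi=\mathrm{Frob}^i$ is again isomorphic to $\F_2[X]/(X^7+1)$. In this module the kernel of multiplication by $f(X)$ is nonzero, because $f\mid X^7+1$; explicitly it contains the class of $(X^7+1)/f(X)$. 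This yields a nonzero $S\in\F_{2^7}$ with $L_1(S)=0$, and Proposition~\ref{prop:linearized_equiv} turns it into a root of $Q_1$.

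The only delicate step is the twisted-module identification in the converse direction. A concrete fallback would be to find the root directly: find $r$ with $ir\equiv 1\pmod 7$; writing $\varphi^r=\mathrm{Frob}$ on $\F_{2^7}$, one checks that $\ker f(\varphi)$ corresponds to $\ker g(\mathrm{Frob})$ for the minimal polynomial $g$ of $\beta^r$, where $\beta$ is a root of $f$. Here $g\in\{X^3+X+1,X^3+X^2+1\}$, and both are factors of $X^7+1$, so the kernel is a $3$-dimensional subspace of $\F_{2^7}$. I expect this bookkeeping to be routine.
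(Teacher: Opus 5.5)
Your proof is correct. After the same reduction to $L_1(S)=S^{q^3}+S^q+S$ via Proposition~\ref{prop:linearized_equiv}, it follows a different route from the paper.

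The paper imports the McGuire--Sheekey matrix criterion. For $a=1$ all Frobenius-twisted companion matrices coincide, so $A_L=C_0^m$, where $C_0$ has characteristic polynomial $T^3+T+1$ with eigenvalues of order $7$. Then $\ker L_1\neq 0$ exactly when $\det(C_0^m-I_3)=0$, i.e.\ when $7\mid m$.

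You instead write $L_1=f(\varphi)$ and use only elementary Galois theory. Since $f\mid X^7+1$, $\ker L_1$ lies in the fixed field $\F_{2^{\gcd(7i,m)}}$ of $\varphi^7$. When $7\nmid m$ this field is $\F_2$, and $L_1(1)=1$ finishes. For $7\mid m$, the normal basis theorem, twisted along the automorphism $X\mapsto X^i$ of $\F_2[X]/(X^7+1)$, supplies a nonzero kernel element. The twisting step you flag as delicate is sound: restriction of scalars along a ring automorphism preserves freeness of rank one.

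Your route is self-contained and shows why $7$ appears: it is the order of $X$ modulo $f$. It also recovers the exact kernel size that the matrix method gives. For $7\mid m$ one has $\gcd(7i,m)=7$, so $\ker L_1=h(\varphi)(\F_{2^7})$ with $h=(X^7+1)/f$, an $\F_2$-space of dimension $7-\deg h=3$. The paper's method, in turn, places $a=1$ inside the general $\sigma$-linearized framework it then discusses for arbitrary $a$.

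You could shorten the converse further. Since $\deg h=4<7$, $h(\varphi)\neq 0$ on $\F_{2^7}$ by Dedekind independence of the distinct automorphisms $\varphi^0,\dots,\varphi^6$. Any nonzero value $h(\varphi)(y)$ then lies in $\ker f(\varphi)$, because $f(\varphi)h(\varphi)=\varphi^7+1=0$ on $\F_{2^7}$.
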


\begin{proof}
By Proposition~\ref{prop:linearized_equiv}, $Q_1$ has a root in $\F_{2^m}^*$ iff $L_1(S)=S^{q^3}+S^q+S$ has a nonzero root in $\F_{2^m}$. For $a=1$, all Frobenius-twisted companion matrices $C_k$ are equal (since $a^{q^k}=1$), so $A_L(m)=C_0^m$ where $C_0=\bigl(\begin{smallmatrix}0&0&1\\1&0&1\\0&1&0\end{smallmatrix}\bigr)$. The characteristic polynomial of $C_0$ over $\F_2$ is $T^3+T+1$, which is irreducible of order~7 (the multiplicative order of its roots in $\F_{2^3}$). Hence $C_0^m=I_3$ iff $7\mid m$. Therefore $\det(A_L(m)-I_3)=0$ iff $7\mid m$, i.e., $Q_1$ has a root in $\F_{2^m}$ iff $7\mid m$. The stated condition follows.
This recovers~\cite[Prop.~4]{LK24}, and our  claim follows.
\end{proof}

For the case $a=1$, the authors of~\cite{LK24} analyzed the linearized polynomial
\[
L_1(S)=S^{q^3}+S^q+S
\]
via the matrix method of McGuire--Sheekey~\cite{MS19}, and proved that it has no nonzero root precisely when $\gcd(m,7)=1$. It is natural to ask whether the same method extends to
\[
L_a(S)=S^{q^3}+aS^q+S,
\qquad a\in \F_{2^m}^*.
\]

For a general $\sigma$-linearized polynomial, the kernel is controlled by the product
\[
A_L=C_LC_L^\sigma\cdots C_L^{\sigma^{m-1}},
\qquad \sigma(x)=x^q,
\]
where $C_L$ is the companion matrix; see~\cite{MS19}. In our case,
\[
C_L=
\begin{pmatrix}
0&0&1\\
1&0&a\\
0&1&0
\end{pmatrix},
\qquad
C_L^{\sigma^k}=
\begin{pmatrix}
0&0&1\\
1&0&a^{q^k}\\
0&1&0
\end{pmatrix}.
\]
When $a=1$, all factors are equal, so $A_L=C_L^m$, which is exactly the situation treated in~\cite{LK24}. For general $a$, however, the Frobenius conjugates $a,a^q,\dots,a^{q^{m-1}}$ appear in different factors, so the product no longer reduces to a matrix power. Thus the argument for $a=1$ does not extend directly.

This already explains the main difficulty: the matrix approach isolates the bad parameters through the condition $\det(A_L-I_3)=0$, but it does not by itself yield a convenient closed description in terms of the single parameter $a$. It is therefore preferable to return to the equivalent one-variable criterion from Proposition~\ref{prop:linearized_equiv}, namely that $L_a$ has a nonzero root if and only if
\[
Q_a(T)=T^{q^2+q+1}+aT+1
\]
has a root in $\F_{2^m}^*$.

The advantage of $Q_a$ is that the bad-parameter set can then be described exactly as the image of a rational function, as follows.

\begin{proposition}\label{prop:bad_valueset}
Let
\[
d:=q^2+q+1
\qquad\text{and}\qquad
g:\F_{2^m}^*\longrightarrow \F_{2^m},
\qquad
g(u):=\frac{u^d+1}{u}.
\]
Then, for $a\in\F_{2^m}^*$, the following are equivalent:
\begin{enumerate}
\item[\textup{(i)}] $Q_a(T)=T^d+aT+1$ has a root in $\F_{2^m}^*$;
\item[\textup{(ii)}] $a\in g(\F_{2^m}^*)$.
\end{enumerate}
Consequently, the set of good parameters is
\[
\mathcal{B}_{m,q}
:=
\{a\in\F_{2^m}^*: Q_a \text{ has no root in }\F_{2^m}^*\}
=
\F_{2^m}^*\setminus g(\F_{2^m}^*).
\]
\end{proposition}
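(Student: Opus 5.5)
The proof is a direct rearrangement of the defining equation of $Q_a$, read as a statement about the parameter $a$ for a fixed nonzero root. We do not need Proposition~\ref{prop:poly_equivalences} or Proposition~\ref{prop:linearized_equiv}.

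For \textup{(i)}$\Rightarrow$\textup{(ii)}, suppose $u\in\F_{2^m}^*$ satisfies $Q_a(u)=0$, that is, $u^d+au+1=0$. In characteristic~$2$ this reads $au=u^d+1$. Since $u\neq 0$, we may divide by $u$ and get $a=(u^d+1)/u=g(u)$. Hence $a\in g(\F_{2^m}^*)$.

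For \textup{(ii)}$\Rightarrow$\textup{(i)}, suppose $a=g(u)$ for some $u\in\F_{2^m}^*$. Multiplying by $u$ gives $au=u^d+1$. Therefore
\[
Q_a(u)=u^d+au+1=u^d+u^d+1+1=0,
\]
so $u$ is a root of $Q_a$ in $\F_{2^m}^*$.

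For the final description of $\mathcal{B}_{m,q}$, we take the complement of condition \textup{(ii)} inside $\F_{2^m}^*$. One small point needs care: $g$ may take the value $0$, namely when $u^d=1$, for example at $u=1$. So $g(\F_{2^m}^*)$ need not lie inside $\F_{2^m}^*$. This does not affect the conclusion, because the equivalence \textup{(i)}$\Leftrightarrow$\textup{(ii)} is only stated for $a\in\F_{2^m}^*$. Hence
\[
\mathcal{B}_{m,q}=\F_{2^m}^*\setminus g(\F_{2^m}^*)
\]
holds as a set difference, whether or not $0$ lies in the image of $g$.

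There is no real obstacle here. The only points needing attention are that dividing by $u$ is legitimate because $u\neq0$, and that all signs disappear in characteristic~$2$. The substantive work, bounding $|g(\F_{2^m}^*)|$ from above to obtain the stated lower bound on good parameters, belongs to the subsequent results rather than to this proposition.
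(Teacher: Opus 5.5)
Your proposal is correct and follows essentially the same route as the paper: for $u\in\F_{2^m}^*$, $Q_a(u)=0$ is rearranged to $a=(u^d+1)/u=g(u)$ by dividing by $u\neq 0$. Your extra remark that $0=g(1)$ may lie in the image of $g$ is accurate and does not affect the set-difference description of $\mathcal{B}_{m,q}$.
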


\begin{proof}
Let $u\in\F_{2^m}^*$. Then
\[
Q_a(u)=u^d+au+1.
\]
Hence
\[
Q_a(u)=0
\quad\Longleftrightarrow\quad
a=\frac{u^d+1}{u}=g(u).
\]
Therefore $Q_a$ has a root in $\F_{2^m}^*$ if and only if $a$ lies in the image
of $g$.
\end{proof}

To study the size of $g(\F_{2^m}^*)$, it is natural to consider the collision
curve $g(x)=g(y)$. Clearing denominators gives
\[
x^d y + x y^d + x + y = 0.
\]
Since
\[
x^d y + x y^d + x + y
=
(x+y)\left(xy\sum_{j=0}^{d-2} x^{d-2-j}y^j +1\right),
\]
the diagonal component $x+y=0$ splits off, and the residual curve is
\[
\Gamma:\quad
xy\sum_{j=0}^{d-2} x^{d-2-j}y^j +1 = 0.
\]

\begin{lemma}\label{lem:Gamma_abs_irr}
The curve $\Gamma$ is absolutely irreducible. 
\end{lemma}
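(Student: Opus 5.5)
The plan is to exploit the very special shape of $\Gamma$. Its defining polynomial is
\[
F(x,y)=F_d(x,y)+1,\qquad F_d(x,y):=xy\,h(x,y),\qquad h(x,y):=\sum_{j=0}^{d-2}x^{d-2-j}y^j=\frac{x^{d-1}+y^{d-1}}{x+y}.
\]
So $F$ is a homogeneous form of degree $d$ plus a nonzero constant, with no terms of intermediate degree. Curves of the form $F_d(x,y)=1$ are Kummer covers of the projective line of slopes. The approach is therefore to reduce absolute irreducibility to a statement about the univariate polynomial $F_d(1,t)$ not being a perfect power.

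\textbf{Step 1 (removing the line $x=0$).} Setting $x=0$ in $F$ gives $F(0,y)=1\neq 0$. Hence $\Gamma$ has no points, and in particular no components, on the line $x=0$. It therefore suffices to prove irreducibility of $\Gamma\cap\{x\neq 0\}$. On $x\neq 0$ I use the birational change of variables $(x,y)\mapsto(x,t)$ with $t=y/x$. By homogeneity,
\[
F(x,tx)=x^d F_d(1,t)+1 .
\]
The curve thus becomes
\[
\Gamma':\; x^d=-\frac{1}{f(t)}=\frac{1}{f(t)},\qquad f(t):=F_d(1,t)=t\,h(1,t).
\]
Absolute irreducibility of $\Gamma$ is equivalent to that of $\Gamma'$. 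Any factorization of $F$ over $\overline{\F_2}$ would induce one of $x^d f(t)+1$ in $\overline{\F_2}[x,t]$, and conversely, since the map is an isomorphism on $x\neq 0$.

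\textbf{Step 2 (Kummer criterion).} View $x^d f(t)+1$ as a polynomial in $x$ over $K=\overline{\F_2}(t)$; irreducibility in $\overline{\F_2}[x,t]$ follows from irreducibility over $K$ by Gauss's lemma, since its content is $1$ (the constant term in $x$ is $1$). Since $d=q^2+q+1$ is odd, it is coprime to the characteristic. Capelli's theorem (the Kummer case) then says that $X^d-c$ with $c\in K^*$ is irreducible over $K$ provided $c$ is not an $\ell$-th power in $K$ for every prime $\ell\mid d$. The extra condition involving $-4K^4$ is vacuous here because $4\nmid d$. So it suffices to show that $1/f(t)$, equivalently $f(t)$, is not an $\ell$-th power in $\overline{\F_2}(t)$ for any prime $\ell\mid d$.

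\textbf{Step 3 (a simple zero).} We have $f(t)=t\,h(1,t)$ and $h(1,0)=1$, since the coefficient of $x^{d-2}$ in $h$ is $1$. Hence $t=0$ is a zero of $f$ of multiplicity exactly $1$. Consequently the valuation $v_t(f)=1$ is not divisible by any $\ell>1$, so $f$ is not an $\ell$-th power in $\overline{\F_2}(t)$. This is the step I expect to be the only delicate point. The other factors of $f$ are highly repeated; indeed $x^{d-1}+y^{d-1}=(x^{q+1}+y^{q+1})^q$, so $h$ is far from squarefree. This is why one must locate a root of multiplicity coprime to $d$ rather than argue via squarefreeness. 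The factor $t$, and symmetrically the point at infinity coming from the factor $x$, supplies exactly such a root.

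\textbf{Step 4 (conclusion).} Combining Steps 2 and 3, $x^d f(t)+1$ is irreducible over $\overline{\F_2}(t)$, hence in $\overline{\F_2}[x,t]$. Therefore $\Gamma'$, and with it $\Gamma$ by Step 1, is absolutely irreducible, which proves Lemma~\ref{lem:Gamma_abs_irr}.
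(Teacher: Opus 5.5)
Your proof is correct and is essentially the paper's argument: after your birational substitution $y=tx$ (the paper uses the symmetric substitution $x\mapsto xy$), your equation $x^d=1/f(t)=\frac{t+1}{t^d+t}$ is the paper's Kummer polynomial $u^d+\frac{v+1}{v^d+v}$ up to renaming. In both, the decisive point is the simple pole at $t=0$ (resp.\ $v=0$), which you feed into Capelli's theorem where the paper invokes Stichtenoth's generalized Eisenstein criterion. Your version is slightly more careful, since you work directly with $\Gamma$ and handle the content via Gauss's lemma, whereas the paper's transformed factor $x^dy^d+xy^d+x+1$ still contains the factor $x+1$ coming from the diagonal, which it passes over silently.
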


\begin{proof}
Consider the birational morphism $(x,y)\mapsto (xy,y)$ applied to $g(x)+g(y)=0$. Recall that a birational morphism provides a bijection between absolutely irreducible components of curves. This gives
$$x^d y^{d+1} + x y^{d+1} + xy + y=y(x^dy^d+xy^d+x+1).$$
The factor $y$ corresponds to the factor $(x+y)$ in $g(x)+g(y)$. The second factor is absolutely irreducible, since 
$$\varphi(u):= u^d+\frac{v+1}{v^d+v}$$
satisfies Eisenstein's criterion for function fields (see \cite[Proposition 3.1.15]{Stichtenoth}), considering as place $P$ in $\overline{\mathbb{F}_2}(v)$ the unique zero of $v$. In fact,  
$$v_P (1) = 0, \quad  v_P\left(\frac{v+1}{v^d+v} \right)=-1, \quad \gcd(d, -1) = 1, $$ 
and, applying  \cite[Proposition 3.1.15(2)]{Stichtenoth}, $\Gamma$ is absolutely irreducible.
\end{proof}

\begin{theorem}
\label{thm:existence}
Let $m\geq 3$ be odd, $q=2^i$ with $\gcd(i,m)=1$, and $d=q^2+q+1$. Define
$g:\F_{2^m}^*\to\F_{2^m}$ by $g(u)=\frac{u^d+1}{u}$, and let
$\mathcal{B}_{m,q}:=\F_{2^m}^*\setminus g(\F_{2^m}^*)$ be the set of good parameters.
Then
$$|\mathcal{B}_{m,q}| \geq \frac{2^m + 1 - (d-1)(d-2)2^{m/2} - d}{d},$$
and for every $a\in\mathcal{B}_{m,q}$, the function $G_a$ is an APN permutation on $\F_{2^m}^3$.
\end{theorem}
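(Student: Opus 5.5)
The plan has two parts. The APN-permutation claim is immediate: for $a\in\mathcal{B}_{m,q}$, Proposition~\ref{prop:bad_valueset} says $Q_a$ has no root in $\F_{2^m}^*$, and $Q_a(0)=1$, so $Q_a$ has no root in $\F_{2^m}$. Theorem~\ref{thm:APN_perm_equiv} then gives that $G_a$ is both a permutation and APN. What remains is the counting bound, which amounts to an upper bound on $|g(\F_{2^m}^*)|$.

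For the count I would use the collision-curve approach. Let $N=|\{(x,y)\in(\F_{2^m}^*)^2: g(x)=g(y)\}|$. If $n_b$ denotes the number of preimages of $b$ under $g$, then $\sum_b n_b=2^m-1$ and $N=\sum_b n_b^2$. Since $g(u)=b$ means $u^d+bu+1=0$, we have $n_b\le d$ for every $b$, hence $N\le d(2^m-1)$; this bound is too crude to be useful. The better route is
\[
N=(2^m-1)+N_\Gamma,
\]
where $N_\Gamma$ counts the affine $\F_{2^m}$-points of $\Gamma$ with $xy\neq 0$, possibly corrected for points lying on both $\Gamma$ and the diagonal.

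For $N_\Gamma$ I would rely on Lemma~\ref{lem:Gamma_abs_irr}: $\Gamma$ is absolutely irreducible of degree $d$, so Hasse--Weil gives
\[
|\#\Gamma(\F_{2^m})-(2^m+1)|\le (d-1)(d-2)2^{m/2}.
\]
The number of points at infinity is at most $d$: the degree-$d$ part of $\Gamma$ is $xy\sum x^{d-2-j}y^j=xy(x^{d-1}+y^{d-1})/(x+y)$, which has at most $d$ linear factors over $\F_{2^m}$. Points with $xy=0$ do not occur, because $\Gamma$ has constant term $1$. This yields the lower bound $N_\Gamma\ge 2^m+1-(d-1)(d-2)2^{m/2}-d$.

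The key observation is that we need an upper bound on the image, so large collision counts are what help. The cleanest way to get that is to count elements with at least two preimages. For each $b$ with $n_b\ge 2$, the contribution $n_b(n_b-1)\le d\,n_b$ comes with $n_b\ge 2$ preimages. Combining,
\[
|g(\F_{2^m}^*)|=\sum_{b:\,n_b\ge1}1\le (2^m-1)-\sum_b (n_b-1)_+ ,
\]
and $\sum_b(n_b-1)_+\ge \sum_b n_b(n_b-1)/d = N_\Gamma/d$ because $n_b\le d$. Therefore
\[
|\mathcal{B}_{m,q}|=(2^m-1)-|g(\F_{2^m}^*)|\ge N_\Gamma/d,
\]
which is exactly the stated bound.

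The main obstacle is the bookkeeping between $N_\Gamma$ and the ordered pairs $x\neq y$ in $\sum_b n_b(n_b-1)$. Diagonal points on $\Gamma$ are those $x$ with $g'(x)=0$; these would inflate $N_\Gamma$ relative to $\sum_b n_b(n_b-1)$, and the inequality only goes the right way if they are handled. Computing $g'(u)=(d-1)u^{d-2}+u^{-2}$, which in characteristic $2$ equals $u^{-2}$ since $d-1=q^2+q$ is even, shows $g'$ never vanishes, so there are no diagonal points on $\Gamma$. I would verify this carefully. I would also check that the projective point count at infinity is bounded by $d$, and possibly replace $d$ by $d-1$ in some accounting, which would only strengthen the bound.
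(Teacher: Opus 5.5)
Your proposal is correct and follows essentially the same route as the paper. You count affine points of the absolutely irreducible collision curve $\Gamma$ via Hasse--Weil, allowing at most $d$ points at infinity. You then use $n_b\le d$ to get $\sum_b (n_b-1)_+\ge N_\Gamma/d$, and invoke Theorem~\ref{thm:APN_perm_equiv} for the APN-permutation claim. Your check that $\Gamma$ has no diagonal points and no points with $xy=0$ makes explicit a step the paper leaves implicit.

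There is one harmless slip: since $g(1)=0$, the image contains $0$, so in fact $|\mathcal{B}_{m,q}|=2^m-|g(\F_{2^m}^*)|$ rather than $(2^m-1)-|g(\F_{2^m}^*)|$. This only strengthens your inequality.
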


\begin{proof}
Let $C_i := \{ v \in \mathbb{F}_{2^m} : \#g^{-1}(v)=i\}$ for $i \geq 0$. Clearly, the sets $C_i$ partition $\mathbb{F}_{2^m}$, so $\sum_i \#C_i = 2^m$. For a fixed $v \in C_i$ with $i > 0$, there exist $u_1, \ldots, u_i \in \mathbb{F}_{2^m}$ such that $g(u_1) = \dots = g(u_i) = v$. These correspond to $i^2$ pairs $(u_{j_1}, u_{j_2})$ on the curve $\mathcal{C}_g : g(x) + g(y) = 0$.

Let $\Gamma$ be the curve defined by the polynomial $\phi(x,y) = \frac{g(x)+g(y)}{x+y} = 0$. The $\mathbb{F}_{2^m}$-rational affine points of $\Gamma$ correspond to the $i^2 - i$ non-diagonal pairs for each $v \in C_i$. Thus, the number of affine rational points is:
$$\#\Gamma(\mathbb{F}_{2^m})_{\text{aff}} = \sum_i (i^2 - i) \#C_i.$$

Note that 
$$\frac{\#\Gamma(\mathbb{F}_{2^m})_{\text{aff}}}{d} \leq \sum_{i\geq 2} (i - 1) \#C_i\leq \frac{\#\Gamma(\mathbb{F}_{2^m})_{\text{aff}}}{2}$$
and 
$$C_1+\sum_{i\geq2} i C_i=2^m.$$
Thus 
\begin{align*}
    C_1+\sum_{i\geq2} C_i&=2^m-\sum_{i\geq2} i C_i+C_2+\cdots C_d\\
    &=2^m-\sum_{i\geq2} (i-1) C_i\leq 2^m-\frac{\#\Gamma(\mathbb{F}_{2^m})_{\text{aff}}}{d},
\end{align*}
which shows that $C_0=2^m-\sum_{i\geq1}C_i\geq \frac{\#\Gamma(\mathbb{F}_{2^m})_{\text{aff}}}{d}$. Now, to get a lower bound on  $\#\Gamma(\mathbb{F}_{2^m})_{\text{aff}}$, it is enough to observe that  $\Gamma$ has degree $d$ and then its genus $g$ satisfies $2g \leq (d-1)(d-2)$. Applying the Hasse-Weil bound and accounting for the points at infinity (at most $d$), we have:
$$\#\Gamma(\mathbb{F}_{2^m})_{\text{aff}} \geq 2^m + 1 - (d-1)(d-2)2^{m/2} - d,$$
and thus 
$$C_0\geq \frac{2^m + 1 - (d-1)(d-2)2^{m/2} - d}{d},$$
which completes the proof.
\end{proof}

\begin{table}[H]
\footnotesize\centering
\caption{Number of ``good'' $a\in\F_{2^m}^*$ for which $G_a$ is an APN permutation ($Q_a$ root-free).
The count is independent of $i$ for fixed odd $m$ with $\gcd(i,m)=1$.}
\label{tab:good_a_counts}
\begin{tabular}{cccccc}
\hline
$m$ & $i$ & $q=2^i$ & $|\F_{2^m}^*|$ & \# good $a$ & $a=1$ good?\\
\hline
3 & 1 & 2 & 7 & 7 & yes\\
3 & 2 & 4 & 7 & 7 & yes\\
5 & 1 & 2 & 31 & 11 & yes\\
5 & 2 & 4 & 31 & 11 & yes\\
7 & 1 & 2 & 127 & 35 & no ($7\mid 7$)\\
7 & 2 & 4 & 127 & 35 & no\\
9 & 1 & 2 & 511 & 385 & yes\\
11 & 1 & 2 & 2047 & 595 & yes\\
\hline
\end{tabular}
\end{table}

\section{The second generalized family \texorpdfstring{$H_a$}{Ha} of APN functions}
\label{sec:Ha}

\begin{definition}
\label{def:Ha}
For $a\in\F_{2^m}^*$, define $H_a:\F_{2^m}^3\to\F_{2^m}^3$ by
\[
H_a(x,y,z) = \bigl(x^{q+1}+axy^q+yz^q,\; xy^q+z^{q+1},\; x^qz+y^{q+1}+ay^qz\bigr).
\]
\end{definition}
 
\begin{theorem}
\label{thm:no_root_iff_APN}
 Suppose  that $q=2^i$, $\gcd(i,m)=1$. Then $H_a$ is APN if and only if  $P_a'(T)=T^{q^2+q+1} + T^{q^2+q}a + 1 $ has no roots in $\mathbb{F}_{2^m}$.   
\end{theorem}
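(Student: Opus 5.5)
Since $H_a$ is quadratic, I will prove the statement by mimicking Theorem~\ref{thm:Ga_normal_form_differential}. $H_a$ is APN if and only if $|\ker D_{\mathbf d}H_a|=2$ for every nonzero $\mathbf d=(A,B,C)$, so the task is a kernel-size analysis of the linearized system.

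\textbf{Step 1: the kernel system.} I expand $H_a(\mathbf x+\mathbf d)+H_a(\mathbf x)+H_a(\mathbf d)=\mathbf 0$ and keep only the cross terms. This gives three $\F_2$-linear equations in $x,y,z$ involving $x,x^q,y,y^q,z,z^q$:
\begin{align*}
&A^qx+Ax^q+aB^qx+aAy^q+C^qy+Bz^q=0,\\
&B^qx+Ay^q+C^qz+Cz^q=0,\\
&C x^q+A^qz+B^qy+By^q+aB^qz+aCy^q=0.
\end{align*}
I will double-check the exact coefficients when writing the proof.

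\textbf{Step 2: directions with a zero coordinate.} I treat these first, as in parts (a)--(c) of Theorem~\ref{thm:Ga_normal_form_differential}.
\begin{itemize}
\item[] For axis directions, each equation becomes an equation of the form $u^q=\lambda u$ in one variable. Since $\gcd(q-1,2^m-1)=1$, each such equation has exactly the solutions $0$ and one nonzero value, so the kernel has size exactly $2$.
\item[] For the two-nonzero-coordinate directions, I solve one equation for a variable and substitute, raising to the $q$-th power where needed. This produces a master equation $\Phi(\text{ratio})\cdot w^{q}=0$, or its analogue in $w^{q^2}$. Here $\Phi$ should be one of the polynomials of Proposition~\ref{prop:poly_equivalences}, evaluated at a ratio such as $A/B$, $B/C$ or $C/A$. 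The natural candidates are $P_a'$ itself, or $R_a$ or $S_a$. I then invoke Proposition~\ref{prop:poly_equivalences} to replace it by $P_a'$.
\item[] When $\Phi\neq 0$, the residual equation forces the kernel to have size $2$. When $\Phi=0$, an Artin--Schreier count via Lemma~\ref{lem:trace} gives a kernel of size $2^m$.
\end{itemize}
This step already proves the direction ``APN $\Rightarrow$ $P_a'$ has no root''. Indeed, if $P_a'(\theta)=0$ with $\theta\in\F_{2^m}^*$, I choose a direction whose coordinate ratio is $\theta$, or its image under the substitutions of Proposition~\ref{prop:poly_equivalences}. That direction has a kernel of size $2^m>2$.

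\textbf{Step 3: the generic directions $ABC\neq0$.} This is the main obstacle. The plan follows part (d) of Theorem~\ref{thm:Ga_normal_form_differential}.
\begin{itemize}
\item[] Use the second equation and its $q$-th power to eliminate $z$ and $z^q$. This leaves two equations $R_1,R_2$ in $x,x^q,x^{q^2},y,y^q,y^{q^2}$.
\item[] Eliminate $x^{q^2}$ to get an equation $R_3$ in $x,x^q,y,y^q$.
\item[] Eliminate $x^{q^2}$ again, between $R_1$ and $R_3^q$, to get $R_4$.
\item[] Compute the determinant $H'(A,B,C)$ of the coefficient pair that decides whether $R_3$ and $R_4$ are linearly dependent.
\end{itemize}
The key claim to verify is a factorization
\[
H'(A,B,C)=\prod_{\theta:\,P_a'(\theta)=0}\ell_\theta(A,B,C)
\]
into linear forms $\ell_\theta$ with coefficients in $\F_2(a,\theta)$. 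I would guess this by analogy with $H$ for $G_a$ and confirm it by comparing degrees and leading coefficients, possibly with computer algebra. Given the factorization, if $P_a'$ has no root in $\F_{2^m}$, then no $\ell_\theta$ is defined over $\F_{2^m}$, so $H'\neq0$ on $\F_{2^m}^3\setminus\{\mathbf 0\}$. In that case $R_3$ and $R_4$ are independent. Eliminating $x^q$ expresses $x$ through $y,y^q$, and substituting into $R_3$ should give a nonzero multiple of $B^{q^2}y^q+B^qy^{q^2}$, forcing $y\in\{0,B\}$ and hence a kernel of size $2$.

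The degenerate subcases, where the coefficient of $x^q$ or of $x$ in $R_3$ vanishes, are handled exactly as in cases 1--3 of the proof for $G_a$. Each either forces $y\in\{0,B\}$, or reduces to $P_a'$, or an equivalent polynomial, vanishing at $A/B$, $B/C$ or $C/A$, which cannot happen when $P_a'$ has no root. Combining Steps 2 and 3 gives the equivalence. I expect the bulk of the difficulty to be the explicit elimination and the factorization of $H'$; if the factorization resists, I would fall back on a resultant computation showing that $H'$ has no nontrivial $\F_{2^m}$-zero unless $P_a'$ has a root.
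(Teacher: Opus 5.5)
Your overall strategy matches the paper's. The paper also does a kernel analysis of the linearized system; your three equations coincide with its $E_1',E_2',E_3'$. It also treats directions with a zero coordinate first and proves the converse through a two-coordinate direction. Your guesses for Step~2 are right: the paper's master factors are $S_a(A/B)$ for $(A,B,0)$, $T^{q^2+q+1}+aT^{q+1}+1$ at $T=C/A$ for $(A,0,C)$ (the case used for the converse), and $R_a(C/B)$ for $(0,B,C)$.

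\textbf{Step~3 cannot be carried out as written.} You copied it verbatim from part~(d) of Theorem~\ref{thm:Ga_normal_form_differential}. There, (E2) is linear in $z$. Here the second equation $B^qx+Ay^q+C^qz+Cz^q=0$ contains both $z$ and $z^q$, so neither it nor its $q$-th power can be solved for $z$. Consequently the equations $R_1,R_2$ in $x,x^q,x^{q^2},y,y^q,y^{q^2}$ that you describe never arise.

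The only variable that equation contains to the first power alone is $x$, and the paper eliminates $x$. It then proceeds as follows:
\begin{enumerate}
\item It obtains $R_1,R_2$ in $y,y^q,y^{q^2},z,z^q,z^{q^2}$.
\item It splits into cases according to whether $A^qC+B^{q+1}+aB^qC$ and $AB^q+C^{q+1}$ vanish; both vanishing forces $Q_a(C/B)=0$.
\item In the generic case it eliminates $y^{q^2}$ and then $y^q$ to reach $R_5=H\cdot R_6$, with $R_6$ linear in $y$.
\item Substituting back into $R_3$ leaves $C^{q^2}z^q+C^qz^{q^2}=0$, not $B^{q^2}y^q+B^qy^{q^2}=0$.
\end{enumerate}
Alternatively, you could eliminate $z$ through the \emph{third} equation, which is linear in $z$ with coefficient $A^q+aB^q$, at the price of a separate subcase $A^q=aB^q$. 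Either way, all intermediate polynomials and the factorization you rely on must be recomputed. In the paper the degeneracy factor is $H=\prod_\lambda\bigl(\lambda^{q+1}A+\lambda^qB+(a\lambda^q+1)C\bigr)$. Here $\lambda$ runs over the roots of $\lambda^{q^2+q+1}+a^q\lambda^{q^2}+1$, not over the roots of $P_a'$.

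\textbf{Your final deduction also has a logical gap.} You argue that because no $\ell_\theta$ is defined over $\F_{2^m}$, the determinant has no zero in $\F_{2^m}^3\setminus\{\mathbf 0\}$. That does not follow. A linear form with coefficients in $\F_{2^m}(\theta)$ has a nonzero rational zero as soon as its three coefficients are $\F_{2^m}$-linearly dependent, and this is automatic if, for instance, $\theta\in\F_{2^{2m}}\setminus\F_{2^m}$.

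What you actually need is that $1,\lambda^q,\lambda^{q+1}$ are $\F_{2^m}$-independent for every non-rational root $\lambda$. This takes a short computation. Suppose $\lambda^{q+1}=\alpha\lambda^q+\beta$ with $\alpha,\beta\in\F_{2^m}$. Then $\lambda^{q^2+q+1}=(\alpha^{q+1}+\beta)\lambda^{q^2}+\alpha\beta^q$. Since $\lambda^{q^2}\notin\F_{2^m}$, the defining equation forces $\beta=\alpha^{q+1}+a^q$ and $\alpha\beta^q=1$. That is, $Q_{a^{q^2}}(\alpha)=0$ with $\alpha\in\F_{2^m}$, which Steps~2--3 of Proposition~\ref{prop:poly_equivalences} rule out under your hypothesis.

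The paper's proof of this theorem asserts this step without argument. An independence argument of exactly this kind appears in the proof of Theorem~\ref{thm:no_root_iff_Permutation}, and you should include one.
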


\begin{proof}
Letting $v = (x,y,z)$ and $d = (A,B,C)$, the generic differential system $D_d H_a(v) = 0$ in characteristic 2 yields three equations:
\begin{align*}
E_1' &= H_1(x+A, y+B, z+C) + H_1(x,y,z) + H_1(A,B,C) = 0 \\
E_2' &= H_2(x+A, y+B, z+C) + H_2(x,y,z) + H_2(A,B,C) = 0 \\
E_3' &= H_3(x+A, y+B, z+C) + H_3(x,y,z) + H_3(A,B,C) = 0.
\end{align*}

The three base differentials obtained by imposing $D_d H_i(v) = H_i(x+A, y+B, z+C) + H_i(x,y,z) + H_i(A,B,C) = 0$ in characteristic 2 are explicitly:
\begin{align*}
E_1' &= (A^q + a B^q) x + A x^q + C^q y + a A y^q + B z^q \\
E_2' &= B^q x + A y^q + C^q z + C z^q \\
E_3' &= C x^q + B^q y + (B + a C) y^q + (A^q + a B^q) z.
\end{align*}

 In what follows we suppose that $P_a'(T)$ has no roots in $\mathbb{F}_{2^m}$ and we will show that at $H_a$ is APN.  Let us consider the special cases where one or more of the parameters $A, B, C$ are equal to zero. We analyze the simplified generic differential system $E_1' = 0$, $E_2' = 0$, and $E_3' = 0$ directly for each case.

\begin{enumerate}
    \item \textbf{Case: $(A, B, C) = (A, 0, 0)$ with $A \neq 0$.}
    The differential system drastically simplifies to:
    \begin{align*}
    E_1' &= A^q x + A x^q + a A y^q = 0 \\
    E_2' &= A y^q = 0 \\
    E_3' &= A^q z = 0.
    \end{align*}
    From $E_2'$ and $E_3'$, since $A \neq 0$, it immediately follows that $y = 0$ and $z = 0$. 
    Substituting $y = 0$ into $E_1'$ leaves $A^q x + A x^q = 0$. Dividing by $A^{q+1}$, this becomes $(x/A)^q + (x/A) = 0$. This equation admits exactly two solutions for $x$, yielding exactly two solutions for the triplet $(x, y, z)$.

    \item \textbf{Case: $(A, B, C) = (0, B, 0)$ with $B \neq 0$.}
    The system reduces to:
    \begin{align*}
    E_1' &= a B^q x + B z^q = 0 \\
    E_2' &= B^q x = 0 \\
    E_3' &= B^q y + B y^q + a B^q z = 0.
    \end{align*}
    From $E_2'$, since $B \neq 0$, we have $x = 0$. 
    Substituting $x = 0$ into $E_1'$ gives $B z^q = 0$, which forces $z = 0$. 
    Finally, substituting $z = 0$ into $E_3'$ leaves $B^q y + B y^q = 0$, or equivalently $y^q B + y B^q = 0$. This admits exactly two solutions for $y$, yielding two solutions for $(x, y, z)$.

    \item \textbf{Case: $(A, B, C) = (0, 0, C)$ with $C \neq 0$.}
    The system reduces to:
    \begin{align*}
    E_1' &= C^q y = 0 \\
    E_2' &= C^q z + C z^q = 0 \\
    E_3' &= C x^q + a C y^q = 0.
    \end{align*}
    From $E_1'$, since $C \neq 0$, we have $y = 0$. 
    Substituting $y = 0$ into $E_3'$ yields $C x^q = 0$, which forces $x = 0$. 
    The remaining equation $E_2'$ is $z^q C + z C^q = 0$, which admits exactly two solutions for $z$, yielding two solutions for $(x, y, z)$.

    \item \textbf{Case: $(A, B, C) = (A, B, 0)$ with $A, B \neq 0$.}
    The system becomes:
\begin{align*}
E_1' &= (A^q + a B^q) x + A x^q + a A y^q + B z^q = 0 \\
E_2' &= B^q x + A y^q = 0 \\
E_3' &= B^q y + B y^q + (A^q + a B^q) z = 0
\end{align*}
From $E_2'$, we can cleanly eliminate $x$ by isolating it as $x = \frac{A}{B^q} y^q$. 
By taking the $q$-th power, $x^q = \frac{A^q}{B^{q^2}} y^{q^2}$. Substituting these into $E_1'$ gives:
$$ (A^q + a B^q) \frac{A}{B^q} y^q + A \frac{A^q}{B^{q^2}} y^{q^2} + a A y^q + B z^q = 0.$$
Then
$$ z^q = \frac{A^{q+1}}{B^{1+q+q^2}} (B^{q^2} y^q + B^q y^{q^2}).$$
Now, taking the $q$-th power of $E_3'$, we obtain $(E_3')^q = B^{q^2} y^q + B^q y^{q^2} + (A^{q^2} + a^q B^{q^2}) z^q = 0$. Substituting our relation for $z^q$ into this equation reduces the entire system to an explicit univariate polynomial in $y$:
$$ (B^{q^2} y^q + B^q y^{q^2}) + (A^{q^2} + a^q B^{q^2}) \frac{A^{q+1}}{B^{1+q+q^2}} (B^{q^2} y^q + B^q y^{q^2}) = 0.$$
Factoring out the common binomial yields:
$$ (B^{q^2} y^q + B^q y^{q^2}) \left( 1 + \frac{A^{1+q+q^2} + a^q A^{q+1} B^{q^2}}{B^{1+q+q^2}} \right) = 0.$$
By our non-degeneracy assumptions on the field and $q$ (which imply the constant factor is non-zero), the system reduces to $B^{q^2} y^q + B^q y^{q^2} = 0$. Recognizing that this is exactly $(B^q y + B y^q)^q = 0$, the polynomial bounds the kernel dimension, again yielding exactly two solutions for $y$.

\item \textbf{Case: $(A,B,C)=(A,0,C)$ with $A,C
\neq0$.}
    The system evaluates to:
\begin{align*}
E_1' &= A^q x + A x^q + C^q y + a A y^q = 0 \\
E_2' &= A y^q + C^q z + C z^q = 0 \\
E_3' &= C x^q + a C y^q + A^q z = 0.
\end{align*}
From $E_3'$, we can isolate $z$ directly as $z = \frac{C}{A^q} x^q + \frac{a C}{A^q} y^q$. 
To simplify the substitution, we can use $E_1'$ to replace $A x^q$. Multiplying our expression for $z$ by $A^{q+1}$, we get:
$$A^{q+1} z = C (A x^q) + a A C y^q.$$
Substituting $A x^q = A^q x + C^q y + a A y^q$ from $E_1'$ into this equation yields
 $$z = \frac{C}{A} x + \frac{C^{q+1}}{A^{q+1}} y$$
Taking the $q$-th power gives $z^q = \frac{C^q}{A^q} x^q + \frac{C^{q^2+q}}{A^{q^2+q}} y^q$. 
Now, substitute both $z$ and $z^q$ into $E_2'$:
$$A y^q + C^q \left( \frac{C}{A} x + \frac{C^{q+1}}{A^{q+1}} y \right) + C \left( \frac{C^q}{A^q} x^q + \frac{C^{q^2+q}}{A^{q^2+q}} y^q \right) = 0.$$
Grouping the terms with $x$ and $x^q$ together,
\begin{equation}
\label{Eq:Converse}
\frac{C^{q+1}}{A^{q+1}} (A^q x + A x^q) + A y^q + \frac{C^{2q+1}}{A^{q+1}} y + \frac{C^{q^2+q+1}}{A^{q^2+q}} y^q = 0.
\end{equation}
We can once again use $E_1'$ to substitute the quantity $(A^q x + A x^q) = C^q y + a A y^q$,
$$\frac{C^{q+1}}{A^{q+1}} (C^q y + a A y^q) + A y^q + \frac{C^{2q+1}}{A^{q+1}} y + \frac{C^{q^2+q+1}}{A^{q^2+q}} y^q = 0.$$
Expanding this, we notice that the $y$ terms $\frac{C^{2q+1}}{A^{q+1}} y + \frac{C^{2q+1}}{A^{q+1}} y$ cancel perfectly. We are left with an equation strictly in $y^q$,
$$\left( a \frac{C^{q+1}}{A^q} + A + \frac{C^{q^2+q+1}}{A^{q^2+q}} \right) y^q = 0.$$
By our non-degeneracy assumptions, the coefficient is non-zero, which forces $y = 0$. 
Substituting $y = 0$ back into $E_1'$ leaves $A^q x + A x^q = 0$, an equation that has exactly two solutions for $x$. Since $z$ is uniquely determined by $x$ and $y$, the system is restricted to exactly two solutions.

\item \textbf{Case: $(A, B, C) = (0, B, C)$ with $B, C \neq 0$.}
The system simplifies to:
\begin{align*}
E_1' &= a B^q x + C^q y + B z^q = 0 \\
E_2' &= B^q x + C^q z + C z^q = 0 \\
E_3' &= C x^q + B^q y + (B + a C) y^q + a B^q z = 0.
\end{align*}
From $E_2'$, we isolate $x$ completely in terms of $z$,
$$x = \frac{C^q}{B^q} z + \frac{C}{B^q} z^q.$$
Taking the $q$-th power yields $x^q = \frac{C^{q^2}}{B^{q^2}} z^q + \frac{C^q}{B^{q^2}} z^{q^2}$. 
Next, we substitute our expression for $x$ into $E_1'$ to isolate $y$,
$$C^q y = a B^q \left( \frac{C^q}{B^q} z + \frac{C}{B^q} z^q \right) + B z^q = a C^q z + (a C + B) z^q.$$
Dividing by $C^q$, we obtain $y$ (and subsequently $y^q$) entirely in terms of $z$,
$$y = a z + \frac{a C + B}{C^q} z^q \implies y^q = a^q z^q + \frac{a^q C^q + B^q}{C^{q^2}} z^{q^2}.$$
Finally, we substitute $x^q$, $y$, and $y^q$ into $E_3'$. Notice what happens to the linear $z$ terms when we substitute $y$,
$$B^q y + a B^q z = B^q \left( a z + \frac{a C + B}{C^q} z^q \right) + a B^q z.$$
The terms $a B^q z + a B^q z = 0$ cancel each other out in characteristic 2! This means the resulting equation will have no linear $z$ term, only $z^q$ and $z^{q^2}$. Substituting the rest into $E_3'$ yields:
$$C \left( \frac{C^{q^2}}{B^{q^2}} z^q + \frac{C^q}{B^{q^2}} z^{q^2} \right) + B^q \left( \frac{a C + B}{C^q} z^q \right) + (B + a C) \left( a^q z^q + \frac{a^q C^q + B^q}{C^{q^2}} z^{q^2} \right) = 0.$$
Grouping by powers of $z$, we obtain a univariate polynomial strictly in $z^q$ and $z^{q^2}$:
$$\left( \frac{C^{q^2+1}}{B^{q^2}} + \frac{B^q (a C + B)}{C^q} + a^q (B + a C) \right) z^q + \left( \frac{C^{q+1}}{B^{q^2}} + \frac{(B + a C)(a^q C^q + B^q)}{C^{q^2}} \right) z^{q^2} = 0.$$
Let this be written as $K_1 z^q + K_2 z^{q^2}=0$,
where
\[
K_1=\frac{C^{q^2+1}}{B^{q^2}}+\frac{B^q(aC+B)}{C^q}+a^q(B+aC),
\quad
K_2=\frac{C^{q+1}}{B^{q^2}}+\frac{(B+aC)(a^qC^q+B^q)}{C^{q^2}}.
\]
We claim that $(K_1,K_2)\neq(0,0)$. Indeed, writing $u=C/B\in\F_{2^m}^*$ and multiplying
$K_1$ by the nonzero scalar $B^{q^2}/C$, we get
\[
\frac{B^{q^2}}{C}K_1
=
u^{q^2}+u^{-q}(au+1)+a^q(1+au)
=
u^{-q-1}\bigl(u^{q^2+q+1}+(au+1)^{q+1}\bigr)
=
u^{-q-1}R_a(u).
\]
Thus, if $K_1=0$, then $R_a(u)=0$, which contradicts
Proposition~\ref{prop:poly_equivalences} together with our assumption that $P_a'$
has no root in $\F_{2^m}$. Therefore $(K_1,K_2)\neq(0,0)$.
If $K_2=0$, then necessarily $K_1\neq 0$, and the above equation reduces to
$K_1z^q=0$, so $z=0$. If $K_2\neq 0$, then dividing by $K_2$ yields an equation of
the form $z^{q^2}+Kz^q=0$.
Applying the inverse Frobenius automorphism on $\F_{2^m}$, this is equivalent to $z^q+Lz=0$
for some $L\in\F_{2^m}$. Since $\gcd(i,m)=1$, this equation has at most two solutions.
Hence in all cases there are at most two possibilities for $z$. Since $x$ and $y$ are
uniquely determined by $z$, there are exactly two solutions for $(x,y,z)$.
\end{enumerate}

Let us consider the case $A,B,C\neq 0$.
We use $E_2'$ to eliminate $x$ and obtain $R_1$ and $R_2$. 
By isolating $x = \frac{A y^q + C^q z + C z^q}{B^q}$ from $E_2'$ and raising it to the $q$-th power to obtain $x^q$, we perform the substitutions in $E_1'$ and $E_3'$ (simulating the resultant computation to clear the denominators). We explicitly obtain:
\begin{align*}
R_1 &= B^{q^2} \left( B^q C^q y + A^{q+1} y^q + C^q(A^q + a B^q)z + (B^{q+1} + A^q C + a B^q C)z^q \right) \\ 
    &\qquad\qquad\qquad\qquad\qquad\qquad\qquad\qquad + A B^q \left( A^q y^{q^2} + C^{q^2} z^q + C^q z^{q^2} \right), \\
R_2 &= B^{q^2} \left( B^q y + (B + a C)y^q + (A^q + a B^q)z \right) \\
    &\qquad\qquad\qquad\qquad\qquad\qquad\qquad\qquad + C \left( A^q y^{q^2} + C^{q^2} z^q + C^q z^{q^2} \right).
\end{align*}
\begin{enumerate}
    \item \textbf{Case: $A^q C + B^{q+1} + B^q C a = 0$ and $A B^q + C^{q+1} = 0$.}
    This would imply 
    $A^q=C^{q^2+q}/{B^{q^2}}$ and $C^{q^2+q+1}+B^{q^2+q+1}+B^{q^2+q}Ca=0$, which is not possible by our assumption (dividing by $B^{q^2+q+1}$, and letting $u:=C/B$, we get $u^{q^2+q+1}+au+1=0$, that is $Q_a(u)=0$, which is not possible via our assumption and Proposition~\ref{prop:poly_equivalences}).

\item \textbf{Case: $A^q C + B^{q+1} + B^q C a = 0$ and $A B^q + C^{q+1} \neq 0$.}
We also compute 
$$ R_3 = \frac{\text{Res}(R_1, R_2, y^{q^2})}{A^q B^{q^2}},$$
     which, written explicitly, yields
     \begin{align*}
     R_3 &= (A B^{2q} + B^q C^{q+1})y + (A^{q+1} C + A B^{q+1} + a A B^q C)y^q \\
         &\quad + (A^{q+1} B^q + a A B^{2q} + A^q C^{q+1} + a B^q C^{q+1})z + (B^{q+1} C + A^q C^2 + a B^q C^2)z^q.
     \end{align*}

   After substituting $a=(A^qC+B^{q+1})/(B^qC)$ and $a^q=(A^{q^2}C^q+B^{q+q^2})/(B^{q^2}C^q)$, a Magma computation shows that $R_3=B^{2q}(AB^q+C^{q+1})(yC+zB)$. By our assumption, $A B^q + C^{q+1} \neq 0$, which implies $y C + z B = 0$. By using this relation and its $q$-th and $q^2$-th powers to sequentially eliminate the variables $y, y^q$, and $y^{q^2}$ from our first reduced equation, the entire system collapses to the condition
    $$ z^q C^{q^2} + z^{q^2} C^q = 0. $$
    By our assumptions on $q$, this equation admits only two solutions for $z$, which consequently yields exactly two solutions for $(x,y,z)$.

    \item \textbf{Case: $A^q C + B^{q+1} + B^q C a \neq 0$ and $A B^q + C^{q+1} = 0$.}
    We compute $R_3$ again as in the previous case. 
    After substituting $A=C^{q+1}/B^q$ and $A^q=C^{q+q^2}/B^{q^2}$, we get that $$R_3=C\bigl(B^{1+q+q^2}+B^{q+q^2}Ca+C^{1+q+q^2}\bigr)(y^qC^q+z^qB^q).$$
    
    Recall the non-degeneracy condition $B^{1+q+q^2} + B^{q+q^2} C a + C^{1+q+q^2} \neq 0$. Under this assumption, we obtain $y^q C^q + z^q B^q = 0$. Substituting this constraint (along with $y C + z B = 0$ and its powers) into the equations to eliminate $y$ completely, the system reduces directly to
    $ z C^q + z^q C = 0$.
    Once again, by our assumptions on $q$, this bounds the system to exactly two solutions in $z$, resulting in two solutions for $(x,y,z)$.

    \item \textbf{Case: $A^q C + B^{q+1} + B^q C a \neq 0$ and $A B^q + C^{q+1} \neq 0$.}
    We proceed by eliminating $y^{q^2}$ between the equations $R_3^{(q)}$ and $R_2$ to define a new polynomial $R_4$, and subsequently eliminate $y^q$ between $R_3$ and $R_4$ to obtain $R_5$.
    Up to nonzero factors, $R_4$ is:
    \begin{align*}
    R_4 &= y A^{q^2} B^q C^q + y B^{2q+q^2} + y B^{q+q^2} C^q a^q + y^q A^q B^{q^2} C + y^q A^{q^2} B C^q \\
        &\quad + y^q A^{q^2} C^{q+1} a + y^q B^{1+q+q^2} + y^q B^{q^2+1} C^q a^q + y^q B^{q+q^2} C a \\
        &\quad + y^q B^{q^2} C^{q+1} a^{q+1} + y^q C^{1+q+q^2} + z A^{q+q^2} C^q + z A^q B^{q+q^2} \\
        &\quad + z A^q B^{q^2} C^q a^q + z A^{q^2} B^q C^q a + z B^{2q+q^2} a + z B^{q+q^2} C^q a^{q+1} \\
        &\quad + z^q A^{q+q^2} C + z^q A^q B^{q^2} C a^q + z^q B^q C^{q^2+1}.
    \end{align*}
    And $R_5$ factorizes (checked by hand and via Magma) into the product of
    \begin{align}
&A^{1+q+q^2} + A^{q+1} B^{q^2} a^q + A B^q C^{q^2} + A^q B^{q^2} C + A^{q^2} B C^q + A^{q^2} C^{q+1} a\nonumber \\ 
&\qquad + B^{1+q+q^2} + B^{q^2+1} C^q a^q + B^{q+q^2}  C a + B^{q^2} C^{q+1} a^{q+1} + C^{1+q+q^2}, \text{ and} \\
&\qquad y B^q C^q + z A^q C^q + z B^q C^q a + z^q A^q C + z^q B^{q+1} + z^q B^q C a.
   \end{align}
    Let
    \begin{align*}
    H &= A^{1+q+q^2} + A^{q+1} B^{q^2} a^q + A B^q C^{q^2} + A^q B^{q^2} C + A^{q^2} B C^q + A^{q^2} C^{q+1} a \\
      &\qquad + B^{1+q+q^2} + B^{q^2+1} C^q a^q + B^{q+q^2} C a + B^{q^2} C^{q+1} a^{q+1} + C^{1+q+q^2}.
    \end{align*}
    By direct Magma computations $$H=\prod_{\begin{array}{c}\lambda \in \overline{\mathbb{F}_2} :\\ \lambda^{q^2+q+1}+\lambda^{q^2}a^q=1\end{array}}\Big(\lambda^{q+1}A+\lambda^qB+(\lambda^qa+1)C\Big).$$
    By our initial assumption, there are no $\lambda\in \mathbb{F}_{2^m}$ satisfying $\lambda^{q^2+q+1}+\lambda^{q^2}a^q=1$ and there are not $(A,B,C)\in \mathbb{F}_{2^m}$, $ABC\neq 0$, making $H$ vanish. Thus $H\neq 0$.
    We can then divide $R_5$ by $H$ to obtain
    $$R_6:= y B^q C^q + z A^q C^q + z B^q C^q a + z^q A^q C + z^q B^{q+1} + z^q B^q C a = 0.$$
    Now we eliminate $y$ and $y^q$ from $R_3$ using $R_6$, obtaining the product of the factors $B^q$, $(A^{q^2} C^q + B^{q+q^2} + B^{q^2} C^q a^q)$, $(A^q C + B^{q+1} + B^q C a)$, $A$, and $(z^q C^{q^2} + z^{q^2} C^q)$.
    Since we are in the case where the parameters do not nullify the first factors, this forces
    $$ z^q C^{q^2} + z^{q^2} C^q = 0.$$
    By our assumptions on $q$, this yields only two solutions in $z$, and consequently two solutions in $(x,y,z)$.
\end{enumerate}

 Suppose now that  $T^{q^2+q+1}+T^{q^2+q}a+1$ has a solution in $\mathbb{F}_{2^m}$. Following the case $(A,B,C)=(A,0,C)$ with $A,C
\neq0$, we can easily find pairs $(A,C)\in \mathbb{F}_{2^m}^2$, $A,C\neq 0$, such that  $A^{q^2}C^{q+1}a+A^{q^2+q+1}+C^{q^2+q+1}=0$ and thus the above equation vanishes. Since $A^qx+Ax^q$ has rank $m-1$ and $A y^q + \frac{C^{2q+1}}{A^{q+1}} y + \frac{C^{q^2+q+1}}{A^{q^2+q}} y^q$ has rank at least $m-1$, there are at least $2^m$ pairs satisfying Equation~\eqref{Eq:Converse} and thus the function $H_a$ is not APN.

The theorem is shown.
\end{proof}

\begin{theorem}
\label{thm:no_root_iff_Permutation}
 Suppose  that $q=2^i$, $\gcd(i,m)=1$. Then $H_a$ is a permutation if and only if  $P_a'(T)=T^{q^2+q+1} + T^{q^2+q}a + 1 $ has no roots in $\mathbb{F}_{2^m}$.   
\end{theorem}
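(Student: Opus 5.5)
Since $H_a$ is quadratic, I will not redo the collision elimination from scratch. Instead I will use the link between injectivity and the differential kernels $\ker D_{\mathbf d}H_a$, together with Theorem~\ref{thm:no_root_iff_APN} and the kernel computations in its proof. For a quadratic map in characteristic $2$,
\[
H_a(\mathbf x+\mathbf d)+H_a(\mathbf x)=D_{\mathbf d}H_a(\mathbf x)+H_a(\mathbf d),
\]
where $D_{\mathbf d}H_a(\mathbf x)=H_a(\mathbf x+\mathbf d)+H_a(\mathbf x)+H_a(\mathbf d)$ is $\F_2$-linear in $\mathbf x$. Hence $H_a$ is a permutation if and only if, for every $\mathbf d\neq\mathbf 0$, the value $H_a(\mathbf d)$ does not lie in the image $\mathrm{Im}\,D_{\mathbf d}H_a$.

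\emph{Direction ``$P_a'$ has a root $\Rightarrow$ not a permutation''.} I would reuse the degenerate case $\mathbf d=(A,0,C)$ from the proof of Theorem~\ref{thm:no_root_iff_APN}. There a root of $P_a'$ produces $A,C\neq0$ with $A^{q^2}C^{q+1}a+A^{q^2+q+1}+C^{q^2+q+1}=0$, and Equation~\eqref{Eq:Converse} collapses, so the kernel has size $2^m$ and the image has codimension $2m$. I then need $H_a(A,0,C)=(A^{q+1},\,C^{q+1},\,A^qC)$ to lie in that image. I will try to produce an explicit preimage. The system $E_1'=(A^{q+1}), E_2'=(C^{q+1}), E_3'=(A^qC)$ is triangular: $z$ is given by $E_3'$, then one substitutes into $E_2'$. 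Tracking the inhomogeneous terms gives one consistency condition, which should again be a multiple of $Q_a$ or $P_a'$ evaluated at $C/A$. If that condition is messy, I would fall back on the Lang--Weil/Aubry route sketched for Theorem~\ref{thm:perm_char}, applied to the factor of the collision variety indexed by a rational root. This needs $m$ large, with small $m$ checked by computer.

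\emph{Direction ``no root $\Rightarrow$ permutation''.} By Theorem~\ref{thm:no_root_iff_APN}, the assumption gives $|\ker D_{\mathbf d}H_a|=2$ for all $\mathbf d\ne\mathbf0$, with kernel $\{\mathbf 0,\mathbf d\}$. So $\mathrm{Im}\,D_{\mathbf d}H_a$ is a hyperplane $\{\mathbf v:\langle \mathbf w_{\mathbf d},\mathbf v\rangle=0\}$ of $\F_{2^m}^3$ over $\F_2$. I would compute the annihilator $\mathbf w_{\mathbf d}$ from the adjoint of $D_{\mathbf d}H_a$ with respect to the trace form $\Tr(\cdot)$. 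Permutation then means
\[
\Tr\bigl(\langle\mathbf w_{\mathbf d},H_a(\mathbf d)\rangle\bigr)=1
\quad\text{for all }\mathbf d\neq\mathbf 0.
\]
Because the adjoint kernel is spanned by a single vector, I expect this trace to reduce to $\Tr(1)=m\bmod 2$, which equals $1$ since $m$ is odd by Lemma~\ref{lem:trace}. This mirrors how the proof for $G_a$ in \cite{BPST25} presumably works. An alternative with lower risk is to find an explicit linear equivalence relating $H_a$ to $G_{a'}$ with $Q_{a'}$ root-equivalent to $P_a'$, and then apply Theorem~\ref{thm:APN_perm_equiv}. I would try this first by looking for a coordinate permutation combined with Frobenius twists.

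\emph{Main obstacle.} The hard step is the adjoint/trace computation in the generic case $ABC\ne0$. There the kernel was found only through the long elimination $R_1,\dots,R_6$, so writing $\mathbf w_{\mathbf d}$ in closed form may be heavy. I would attempt it symbolically in the style of the $R_i$ reductions, with Magma assistance.
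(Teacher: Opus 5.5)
Your reformulation is sound: $D_{\mathbf d}H_a$ is $\F_2$-linear, and $H_a$ is injective iff $H_a(\mathbf d)\notin\mathrm{Im}\,D_{\mathbf d}H_a$ for every $\mathbf d\neq\mathbf 0$. The main direction (no root $\Rightarrow$ permutation) is not proved, however. It rests entirely on the hope that $\Tr\bigl(\langle\mathbf w_{\mathbf d},H_a(\mathbf d)\rangle\bigr)$ ``reduces to $\Tr(1)$'', and the information you feed in cannot yield that.

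Theorem~\ref{thm:no_root_iff_APN} only gives $\ker D_{\mathbf d}H_a=\{\mathbf 0,\mathbf d\}$. This data, and hence every hyperplane $\mathbf w_{\mathbf d}^{\perp}$, is unchanged if $H_a$ is replaced by $H_a+\ell$ for any $\F_2$-linear $\ell$. Such perturbations are in general not bijective: $x^3$ and $x^3+x$ on $\F_{2^n}$ are both APN, yet $x^3+x$ sends $0$ and $1$ to $0$. So the whole content of the theorem lies in the uncomputed position of $H_a(\mathbf d)$ relative to $\mathbf w_{\mathbf d}$ when $ABC\neq0$, which you yourself flag as unresolved. Appealing to how \cite{BPST25} ``presumably'' argues is not a substitute.

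The paper does not use differential kernels here. It eliminates directly in the collision system \eqref{eq:1}--\eqref{eq:3}, with cases $\beta=0$ and $\beta\neq0$, the latter split by whether $z\beta^{q^2}+\gamma y^{q^2}$ vanishes. It then factors the resulting $R_3$ over $\lambda\in\Lambda$. When no $\lambda$ is rational, the $\F_{2^m}$-linear independence of $1,\lambda,a\lambda^q+\lambda^{q+1}$ forces $y^q\beta+y\beta^q+\beta^{q+1}=0$, contradicting $\Tr(1)=1$. Its non-permutation direction uses absolute irreducibility and Lang--Weil, so it only covers large $m$.

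Your ``lower-risk alternative'' is the idea that closes the gap, but you only propose to look for it. Let $G^{(q^{-1})}_b$ denote $G_b$ built with the Frobenius $x\mapsto x^{2^{m-i}}$ instead of $x\mapsto x^q$, and set $\tau(x,y,z)=(x,z,y)$ and $\Phi(u,v,w)=(u^q,v^q,w^q)$. A direct check gives
\[
H_a=\Phi\circ G^{(q^{-1})}_{a^{1/q}}\circ\tau ,
\]
where $\Phi$ and $\tau$ are $\F_2$-linear bijections. Apply Theorem~\ref{thm:APN_perm_equiv} and Proposition~\ref{prop:linearized_equiv} with $2^{m-i}$ in place of $q$, which is legitimate since $\gcd(m-i,m)=1$. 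They say $H_a$ is a permutation iff $S^{q^{-3}}+a^{1/q}S^{q^{-1}}+S$ has no nonzero root in $\F_{2^m}$. This map is exactly the adjoint $L_a^*$ of $L_a(S)=S^{q^3}+aS^q+S$ with respect to $\Tr(xy)$, so $\dim_{\F_2}\ker L_a^*=\dim_{\F_2}\ker L_a$. Proposition~\ref{prop:linearized_equiv} together with Step~2 of Proposition~\ref{prop:poly_equivalences} then turns this into the root condition for $Q_a$, i.e.\ for $P_a'$. This is consistent with Theorem~\ref{thm:cross_family_inequiv}, which concerns equal $q$ only.

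Your $(A,0,C)$ plan for the converse also works for every odd $m$, without Lang--Weil. Put $t=C/A$. The first and third equations give $z=tx+t^{q+1}y$, and the second becomes $Ay^q\bigl(t^{q^2+q+1}+at^{q+1}+1\bigr)=0$. Such a $t\in\F_{2^m}^*$ with vanishing bracket exists iff $P_a'$ has a root, by the same adjoint argument via $t=S^{q-1}$. When the bracket vanishes, the system is solvable iff $\Tr(\kappa y)=1$ for some $y$, where $\kappa=A^{-1}(t^q+a^{1/q})$. Here $\kappa\neq0$, since $a=t^{q^2}$ would make the bracket equal $1$. Note also that the image there has codimension $m$, not $2m$.
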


\begin{proof}
Let $H_a(\mathbf{v}) = (F_1(\mathbf{v}), F_2(\mathbf{v}), F_3(\mathbf{v}))$ where $\mathbf{v} = (x,y,z)$. To prove that $H_a$ is a permutation polynomial over $\mathbb{F}_{2^m}^3$, we must show that the polar derivative $D_{\mathbf{w}}H_a(\mathbf{v}) = H_a(\mathbf{v} + \mathbf{w}) + H_a(\mathbf{v}) = 0$ implies $\mathbf{w} = (0,0,0)$, where $\mathbf{w} = (\alpha, \beta, \gamma)$.

Since the characteristic is $2$, the condition $D_{\mathbf{w}}H_a(\mathbf{v}) = 0$ yields the following system of equations
\begin{align}
x\alpha^q + \alpha x^q + \alpha^{q+1} + a(x\beta^q + \alpha y^q + \alpha\beta^q) + y\gamma^q + \beta z^q + \beta\gamma^q &= 0 \label{eq:1} \\
x\beta^q + \alpha y^q + \alpha\beta^q + z\gamma^q + \gamma z^q + \gamma^{q+1} &= 0 \label{eq:2} \\
x^q\gamma + \alpha^q z + \alpha^q\gamma + y\beta^q + \beta y^q + \beta^{q+1} + a(y^q\gamma + \beta^q z + \beta^q\gamma) &= 0. \label{eq:3}
\end{align}

We proceed by analyzing two main cases based on the value of $\beta$.
\begin{enumerate}
    \item $\beta = 0$.\\
    Substituting $\beta = 0$ into equation \eqref{eq:2} yields:
$$z\gamma^q + \gamma z^q + \gamma^{q+1} = 0.$$
If $\gamma \neq 0$, dividing by $\gamma^{q+1}$ gives $(z/\gamma) + (z/\gamma)^q + 1 = 0$, which implies that the absolute trace $\operatorname{Tr}(1) = 0$. Since $m$ is odd and $\gcd(i,m)=1$, this is a contradiction. Thus, we must have $\gamma = 0$.

Substituting $\beta = 0$ and $\gamma = 0$ into equation \eqref{eq:1} provides:
$$x\alpha^q + \alpha x^q + \alpha^{q+1} = 0.$$
By the exact same trace argument over $\mathbb{F}_{2^m}$, this equation only admits the solution $\alpha = 0$. Therefore, if $\beta = 0$, the only possible solution is the trivial one, $\mathbf{w} = (0,0,0)$.

\item $\beta\neq 0 $ and $z\beta^{q^2} + \gamma y^{q^2} = 0$.\\
First, suppose $\gamma = 0$. Then $z\beta^{q^2} = 0$, which implies $z = 0$ since $\beta \neq 0$. Substituting $\gamma = 0$ and $z = 0$ into \eqref{eq:2} yields
$$x\beta^q + \alpha y^q + \alpha\beta^q = 0 \implies x = \frac{\alpha(y^q + \beta^q)}{\beta^q}.$$
Substituting these values into \eqref{eq:3} leaves:
$$y\beta^q + \beta y^q + \beta^{q+1} = 0.$$
Dividing by $\beta^{q+1}$ gives $(y/\beta)^q + (y/\beta) + 1 = 0$, which implies $\operatorname{Tr}(1) = 0$. Since $m$ is odd and $\gcd(i,m)=1$, this is a contradiction. Thus, we must have $\gamma \neq 0$.

Since both $\beta \neq 0$ and $\gamma \neq 0$, the condition $z\beta^{q^2} + \gamma y^{q^2} = 0$ can be rewritten as:
$$\frac{z}{\gamma} = \left(\frac{y}{\beta}\right)^{q^2}.$$
Let $u = y/\beta$. Then we can parameterize $y$ and $z$ as $y = u\beta$ and $z = u^{q^2}\gamma$. 

Substitute $y = u\beta$ and $z = u^{q^2}\gamma$ into equation \eqref{eq:2}:
$$x\beta^q =\frac{\alpha u^q\beta^q + \alpha\beta^q + u^{q^2}\gamma^{q+1} + u^{q^3}\gamma^{q+1} + \gamma^{q+1}}{\beta^q}.$$

Now substitute $y = u\beta$, $z = u^{q^2}\gamma$, and $x^q$ into equation \eqref{eq:3} one gets
$$\frac{\gamma^{q^2+q}}{\beta^{q^2}}(u^{q^3} + u^{q^4} + 1) + \frac{\beta^{q+1}}{\gamma}(u + u^q + 1) + a\beta^q(u^q + u^{q^2} + 1) = 0.$$

Let $W = u + u^q + 1$. Note that $W \neq 0$, because $W = 0$ would imply $\operatorname{Tr}(1) = 0$, which is impossible for odd $m$. Thus
$$\frac{\gamma^{q^2+q}}{\beta^{q^2}} W^{q^3} + \frac{\beta^{q+1}}{\gamma} W + a\beta^q W^q = 0\iff \frac{\gamma^{q^2+q+1}}{\beta^{q^2+q+1}} W^{q^3-1} + a \frac{\gamma}{\beta} W^{q-1} + 1 = 0.$$

Define a new variable $S = \frac{\gamma}{\beta} W^{q-1}$ and thus 
$$S^{q^2+q+1} + aS + 1 = 0.$$
This is exactly the polynomial $P_a(1/S) = 0$. If  $P_a$ has no roots in $\mathbb{F}_{2^m}$ then we got a contradiction. 

\item $\beta\neq 0 $ and $z\beta^{q^2} + \gamma y^{q^2} \neq 0$.\\
 By \eqref{eq:2} we deduce 
$$x=\frac{\alpha y^q + \alpha\beta^q + z\gamma^q + \gamma z^q + \gamma^{q+1}}{\beta^q}$$
and thus the remaining equation reads
\begin{align*}
R_1 &:= y\gamma^q\beta^{q+q^2} + az\gamma^q\beta^{q+q^2} + z\alpha^q\gamma^q\beta^{q^2} + \alpha^{q+1}y^q\beta^{q^2} + \alpha z^q\beta^q\gamma^{q^2} + \alpha^{q+1}y^{q^2}\beta^q \\
&\quad + \alpha^{q+1}\beta^{q+q^2} + \alpha z^{q^2}\beta^q\gamma^q + \alpha\beta^q\gamma^{q+q^2} + z^q\beta^{1+q+q^2} + \gamma^q\beta^{1+q+q^2} + a\gamma z^q\beta^{q+q^2} \\
&\quad + a\gamma^{q+1}\beta^{q+q^2} + \gamma z^q\alpha^q\beta^{q^2} + \alpha^q\gamma^{q+1}\beta^{q^2} = 0
\end{align*}
 and 
\begin{align*}
R_2 &:= y\beta^{q+q^2} + az\beta^{q+q^2} + z\alpha^q\beta^{q^2} + y^q\beta^{1+q^2} + \beta^{1+q+q^2} + a\gamma y^q\beta^{q^2} \\
&\quad + a\gamma\beta^{q+q^2} + \gamma\gamma^{q^2}z^q + \gamma\alpha^qy^{q^2} + \gamma^{q+1}z^{q^2} + \gamma^{1+q+q^2} = 0.
\end{align*}
From this last expression we deduce 
$$\alpha^q = \frac{y\beta^{q+q^2} + az\beta^{q+q^2} + y^q\beta^{1+q^2} + \beta^{1+q+q^2} + a\gamma y^q\beta^{q^2} + a\gamma\beta^{q+q^2} + \gamma\gamma^{q^2}z^q + \gamma^{q+1}z^{q^2} + \gamma^{1+q+q^2}}{z\beta^{q^2} + \gamma y^{q^2}},$$
and combining with the other equation (after dividing by a suitable power of $\beta$) we obtain (we used Magma to simplify)
{\small
\begin{align*}
R_3 &:= y^{1+q+q^2}\beta^{q+q^2+q^3} + y^{1+q+q^3}\beta^{q+2q^2} + y^{q+1}\beta^{q+2q^2+q^3} + a^qy^{1+q^2}z^q\beta^{q+q^2+q^3} + a^qy^{1+q^3}z^q\beta^{q+2q^2} \\
&\quad + a^qyz^q\beta^{q+2q^2+q^3} + yz^{q+q^2}\beta^{q+q^2}\gamma^{q^3} + yz^{q+q^3}\beta^{q+q^2}\gamma^{q^2} + yz^q\beta^{q+q^2}\gamma^{q^2+q^3} + y^{1+2q^2}\beta^{2q+q^3} \\
&\quad + y^{1+q^2+q^3}\beta^{2q+q^2} + y\beta^{2q+2q^2+q^3} + y\beta^{2q+2q^2+q^3} + a^qy^{1+2q^2}\beta^{q+q^3}\gamma^q + a^qy^{1+q^2+q^3}\beta^{q+q^2}\gamma^q \\
&\quad + a^qy^{1+q^3}\beta^{q+2q^2}\gamma^q + a^qy\beta^{q+2q^2+q^3}\gamma^q + y^{1+q^2}z^{q^2}\beta^q\gamma^{q+q^3} + y^{1+q^2}z^{q^3}\beta^q\gamma^{q+q^2} + y^{1+q^2}\beta^q\gamma^{q+q^2+q^3} \\
&\quad + yz^{q^2}\beta^{q+q^2}\gamma^{q+q^3} + yz^{q^3}\beta^{q+q^2}\gamma^{q+q^2} + y\beta^{q+q^2}\gamma^{q+q^2+q^3} + a y^{q+q^2}z\beta^{q+q^2+q^3} + a y^{q+q^3}z\beta^{q+2q^2} \\
&\quad + a y^qz\beta^{q+2q^2+q^3} + a a^qy^{q^2}z^{1+q}\beta^{q+q^2+q^3} + a a^qy^{q^3}z^{1+q}\beta^{q+2q^2} + a a^qz^{1+q}\beta^{q+2q^2+q^3} + a z^{1+q+q^2}\beta^{q+q^2}\gamma^{q^3} \\
&\quad + a z^{1+q+q^3}\beta^{q+q^2}\gamma^{q^2} + a z^{1+q}\beta^{q+q^2}\gamma^{q^2+q^3} + a y^{2q^2}z\beta^{2q+q^3} + a y^{q^2+q^3}z\beta^{2q+q^2} + a y^{q^3}z\beta^{2q+2q^2} \\
&\quad + a z\beta^{2q+2q^2+q^3} + a a^qy^{2q^2}z\beta^{q+q^3}\gamma^q + a a^qy^{q^2+q^3}z\beta^{q+q^2}\gamma^q + a a^qy^{q^3}z\beta^{q+2q^2}\gamma^q + a a^qz\beta^{q+2q^2+q^3}\gamma^q \\
&\quad + a y^{q^2}z^{1+q^2}\beta^q\gamma^{q+q^3} + a y^{q^2}z^{1+q^3}\beta^q\gamma^{q+q^2} + a y^{q^2}z\beta^q\gamma^{q+q^2+q^3} + a z^{1+q^2}\beta^{q+q^2}\gamma^{q+q^3} + a z^{1+q^3}\beta^{q+q^2}\gamma^{q+q^2} \\
&\quad + a z\beta^{q+q^2}\gamma^{q+q^2+q^3} + y^qz^{1+q^2}\beta^{q^2+q^3}\gamma^q + y^{q+q^3}z\beta^{q^2}\gamma^{q+q^2} + y^qz\beta^{q^2+q^3}\gamma^{q+q^2} + y^{q^2}z^{1+q}\beta^{q+q^3}\gamma^{q^2} \\
&\quad + z^{1+q+q^2}\beta^{q+q^2+q^3} + a^qy^{q^2}z^{1+q}\beta^{q^3}\gamma^{q+q^2} + a^qy^{q^3}z^{1+q}\beta^{q^2}\gamma^{q+q^2} + a^qz^{1+q}\beta^{q^2+q^3}\gamma^{q+q^2} + z^{1+q}\gamma^{q+q^2+q^3} \\
&\quad + z^{1+q+q^3}\gamma^{q+2q^2} + z^{1+q}\gamma^{q+2q^2+q^3} + y^{q^2}z^{1+q^2}\beta^{q+q^3}\gamma^q + y^{q^2}z\beta^{q^3}\gamma^{q+q^2} + y^{q^3}z^{1+q^2}\beta^{q^2}\gamma^q \\
&\quad + z^{1+q^2}\beta^{q^2+q^3}\gamma^q + y^{q^3}z\beta^{q^2}\gamma^{q+q^2} + z\beta^{q^2+q^3}\gamma^{q+q^2} + a^qy^{q^2}z^{1+q^2}\beta^{q^3}\gamma^{2q} + a^qy^{q^2}z\beta^{q^3}\gamma^{2q+q^2} \\
&\quad + a^qy^{q^3}z^{1+q^2}\beta^{q^2}\gamma^{2q} + a^qz^{1+q^2}\beta^{q^2+q^3}\gamma^{2q} + a^qy^{q^3}z\beta^{q^2}\gamma^{2q+q^2} + a^qz\beta^{q^2+q^3}\gamma^{2q+q^2} + z^{1+2q^2}\gamma^{2q+q^3} \\
&\quad + z^{1+q^2+q^3}\gamma^{2q+q^2} + z\gamma^{2q+2q^2+q^3} + z\gamma^{2q+2q^2+q^3} + y^{q^2+2q}\beta^{1+q^2+q^3} + y^{2q+q^3}\beta^{1+2q^2} \\
&\quad + y^{2q}\beta^{1+2q^2+q^3} + a^qy^{q+q^2}z^q\beta^{1+q^2+q^3} + a^qy^{q+q^3}z^q\beta^{1+2q^2} + a^qy^qz^q\beta^{1+2q^2+q^3} + y^qz^{q+q^2}\beta^{1+q^2}\gamma^{q^3} \\
&\quad + y^qz^{q+q^3}\beta^{1+q^2}\gamma^{q^2} + y^qz^q\beta^{1+q^2}\gamma^{q^2+q^3} + y^{q+2q^2}\beta^{1+q+q^3} + y^{q+q^2+q^3}\beta^{1+q+q^2} + y^q\beta^{1+q+q^2+q^3} \\
&\quad + a^qy^{q+2q^2}\beta^{1+q^3}\gamma^q + a^qy^{q+q^2+q^3}\beta^{1+q^2}\gamma^q + a^qy^{q+q^3}\beta^{1+2q^2}\gamma^q + a^qy^q\beta^{1+2q^2+q^3}\gamma^q + y^qz^{q^2}\beta^{1+q}\gamma^{q+q^3} \\
&\quad + y^qz^{q^3}\beta^{1+q}\gamma^{q+q^2} + y^q\beta^{1+q}\gamma^{q+q^2+q^3} + y^qz^{q^2}\beta^{1+q^2}\gamma^{q+q^3} + z^{q^3}\beta^{1+q^2}\gamma^{q+q^2} + \beta^{1+q^2}\gamma^{q+q^2+q^3} \\
&\quad + a^qy^{q^2}z^{2q}\beta^{1+q^2+q^3} + a^qy^{q^3}z^{2q}\beta^{1+2q^2} + a^qz^{2q}\beta^{1+2q^2+q^3} + z^{2q+q^2}\beta^{1+q^2}\gamma^{q^3} + z^{2q+q^3}\beta^{1+q^2}\gamma^{q^2} \\
&\quad + z^{2q}\beta^{1+q^2}\gamma^{q^2+q^3} + y^{2q^2}\beta^{1+2q+q^3} + y^{q^2+q^3}\beta^{1+2q+q^2} + y^{q^3}\beta^{1+2q+2q^2} + \beta^{1+2q+2q^2+q^3} \\
&\quad + a^qy^{2q^2}\beta^{1+q+q^3}\gamma^q + a^qy^{q^2+q^3}\beta^{1+q+q^2}\gamma^q + a^qy^{q^3}\beta^{1+q+2q^2}\gamma^q + a^q\beta^{1+q+2q^2+q^3}\gamma^q + y^{q^2}z^{q^2}\beta^{1+q}\gamma^{q+q^3} \\
&\quad + y^{q^2}z^{q^3}\beta^{1+q}\gamma^{q+q^2} + y^{q^2}\beta^{1+q}\gamma^{q+q^2+q^3} + z^{q^2}\beta^{1+q+q^2}\gamma^{q+q^3} + z^{q^3}\beta^{1+q+q^2}\gamma^{q+q^2} + \beta^{1+q+q^2}\gamma^{q+q^2+q^3} \\
&\quad + a y^{q^2+2q}\gamma\beta^{q^2+q^3} + a y^{2q+q^3}\gamma\beta^{2q^2} + a y^{2q}\gamma\beta^{2q^2+q^3} + a a^qy^{q+q^2}z^q\gamma\beta^{q^2+q^3} + a a^qy^{q+q^3}z^q\gamma\beta^{2q^2} \\
&\quad + a a^qy^qz^q\gamma\beta^{2q^2+q^3} + a y^qz^{q+q^2}\gamma\beta^{q^2}\gamma^{q^3} + a y^qz^{q+q^3}\gamma\beta^{q^2}\gamma^{q^2} + a y^qz^q\gamma\beta^{q^2}\gamma^{q^2+q^3} + a y^{q+2q^2}\gamma\beta^{q+q^3} \\
&\quad + a y^{q+q^2+q^3}\gamma\beta^{q+q^2} + a y^q\gamma\beta^{q+q^2+q^3} + a a^qy^{q+2q^2}\gamma^{1+q}\beta^{q^3} + a a^qy^{q+q^2+q^3}\gamma^{1+q}\beta^{q^2} + a a^qy^{q+q^3}\gamma^{1+q}\beta^{2q^2} \\
&\quad + a a^qy^q\gamma^{1+q}\beta^{2q^2+q^3} + a y^qz^{q^2}\gamma^{1+q+q^3} + a y^qz^{q^3}\gamma^{1+q+q^2} + a y^q\gamma^{1+q+q^2+q^3} + a y^qz^{q^2}\gamma^{1+q}\beta^{q^2}\gamma^{q^3} \\
&\quad + a y^q\gamma^{1+q}\beta^{q^2}z^{q^3} + a y^q\gamma^{1+q}\beta^{q^2}\gamma^{q^2+q^3} + a a^qy^{q^2}z^{2q}\gamma\beta^{q^2+q^3} + a a^qy^{q^3}z^{2q}\gamma\beta^{2q^2} + a a^qz^{2q}\gamma\beta^{2q^2+q^3} \\
&\quad + a z^{2q+q^2}\gamma\beta^{q^2}\gamma^{q^3} + a z^{2q+q^3}\gamma\beta^{q^2}\gamma^{q^2} + a z^{2q}\gamma\beta^{q^2}\gamma^{q^2+q^3} + a y^{2q^2}\gamma\beta^{2q+q^3} + a y^{q^2+q^3}\gamma\beta^{2q+q^2} \\
&\quad + a y^{q^3}\gamma\beta^{2q+2q^2} + a\gamma\beta^{2q+2q^2+q^3} + a a^qy^{2q^2}\gamma^{1+q}\beta^{q+q^3} + a a^qy^{q^2+q^3}\gamma^{1+q}\beta^{q+q^2} + a a^qy^{q^3}\gamma^{1+q}\beta^{q+2q^2} \\
&\quad + a a^q\gamma^{1+q}\beta^{q+2q^2+q^3} + a y^{q^2}z^{q^2}\gamma^{1+q+q^3} + a y^{q^2}z^{q^3}\gamma^{1+q+q^2} + a y^{q^2}\gamma^{1+q+q^2+q^3} + a z^{q^2}\gamma^{1+q}\beta^{q^2}\gamma^{q^3} \\
&\quad + a \gamma^{1+q}\beta^{q^2}z^{q^3} + a \gamma^{1+q}\beta^{q^2}\gamma^{q^2+q^3} + y^{q+q^2}z^q\gamma^{1+q^2}\beta^{q^3} + y^{q+q^3}z^q\gamma^{1+q^2}\beta^{q^2} + y^qz^q\gamma^{1+q^2}\beta^{q^2+q^3} \\
&\quad + y^{q+q^2}\gamma^{1+q}\gamma^{q^2}y^{q^3} + y^qz^{q^2}\gamma^{1+q}\beta^{q^2}y^{q^3} + y^qz^{q^2}\gamma^{1+q}\beta^{q^2+q^3} + y^q\gamma^{1+q}\beta^{q^2}\gamma^{q^2}y^{q^3} + y^q\gamma^{1+q}\beta^{q^2+q^3}\gamma^{q^2} \\
&\quad + a^qy^{q^2}z^{2q}\gamma^{1+q^2}\beta^{q^3} + a^qy^{q^3}z^{2q}\gamma^{1+q^2}\beta^{q^2} + a^qz^{2q}\gamma^{1+q^2}\beta^{q^2+q^3} + z^{q^2+2q}\gamma^{1+q^2+q^3} + z^{2q+q^3}\gamma^{1+q^2+q^2} \\
&\quad + z^{2q}\gamma^{1+q^2+q^2+q^3} + y^{q^2}z^q\beta^q\gamma^{1+q^2}\beta^{q^3} + y^{q^2}z^q\beta^q\gamma^{1+q^2}y^{q^3} + y^qz^q\beta^{q+q^2}\gamma^{1+q^2}y^{q^3} + z^q\beta^{q+q^2}\gamma^{1+q^2+q^3} \\
&\quad + a^qy^{q^2}z^{q+q^2}\gamma^{1+q}\beta^{q^3} + a^qy^{q^3}z^{q+q^2}\gamma^{1+q}\beta^{q^2} + a^qz^{q+q^2}\gamma^{1+q}\beta^{q^2+q^3} + z^{2q^2+q}\gamma^{1+q+q^3} + z^{q+q^2+q^3}\gamma^{1+q+q^2} \\
&\quad + z^q\gamma^{1+q+2q^2+q^3} + y^{q^2}z^{q^2}\beta^q\gamma^{1+q}\beta^{q^3} + y^{q^2}\beta^q\gamma^{1+q+q^2}\beta^{q^3} + z^{q^2}\beta^{q+q^2}\gamma^{1+q}y^{q^3} + z^{q^2}\beta^{q+q^2+q^3}\gamma^{1+q} \\
&\quad + \beta^{q+q^2}\gamma^{1+q+q^2}y^{q^3} + \beta^{q+q^2+q^3}\gamma^{1+q+q^2} + a^qy^{q^2}z^{q^2}\gamma^{1+2q}\beta^{q^3} + a^qy^{q^2}\gamma^{1+2q+q^2}\beta^{q^3} + a^qz^{q^2}\beta^{q^2}\gamma^{1+2q}y^{q^3} \\
&\quad + a^qz^{q^2}\beta^{q^2+q^3}\gamma^{1+2q} + a^q\beta^{q^2}\gamma^{1+2q+q^2}y^{q^3} + a^q\beta^{q^2+q^3}\gamma^{1+2q+q^2} + z^{2q^2}\gamma^{1+2q+q^3} + z^{q^2+q^3}\gamma^{1+2q+q^2} \\
&\quad + z^{q^3}\gamma^{1+2q+2q^2} + \gamma^{1+2q+2q^2+q^3} = 0.
\end{align*}
}

Now, 
\begin{equation}\label{R_3_prod}
R_3 = \prod_{\lambda\in \Lambda} \Big( y^q\beta + y\beta^q + \beta^{q+1} + \lambda(y^q\gamma + \gamma\beta^q + z\beta^q) + (a\lambda^q + \lambda^{q+1})(z\gamma^q + \gamma^{q+1} + z^q\gamma) \Big),
\end{equation}
where 
$$\Lambda := \left\{\lambda \in \overline{\mathbb{F}_2} : a^{q+1}\lambda^{q^2} + a\lambda^{q+q^2} + a^q\lambda^{1+q^2} + \lambda^{1+q+q^2} + 1 = 0 \right\}.$$

Note that $\lambda\in \mathbb{F}_{2^m}$ if and only if 
$$\left(\frac{1}{\lambda}\right)^{q^2+q+1} + \left(\frac{a}{\lambda}+1\right)^{q+1} = 0,$$
that is, $P_a(1/\lambda) = 0$. Thus, all the factors in \eqref{R_3_prod} are defined over $\mathbb{F}_{2^m}$ if and only if $P_a(T)$ has roots in $\mathbb{F}_{2^m}$. Also, each factor $y^q\beta + y\beta^q + \beta^{q+1} + \lambda(y^q\gamma + \gamma\beta^q + z\beta^q) + (a\lambda^q + \lambda^{q+1})(z\gamma^q + \gamma^{q+1} + z^q\gamma)$ is absolutely irreducible. We distinguish two cases:

\begin{enumerate}
    \item $a \neq \lambda$. The partial derivatives with respect to $y, z, \beta, \gamma$ are 
    $$\begin{cases}
    \beta^q = 0 \\
    \lambda \beta^q + (a\lambda^q + \lambda^{q+1})\gamma^q = 0 \\
    y^q + \beta^q = 0 \\
    \lambda(y^q + \beta^q) + (a\lambda^q + \lambda^{q+1})(\gamma^q + z^q) = 0.
    \end{cases}$$
    Since $\lambda \neq 0$, the only possible solution to the above system is $(0,0,0,0)$. Thus, the hypersurface in $\mathbb{P}^3(\overline{\mathbb{F}_2})$ defined by the above equation is non-singular and therefore absolutely irreducible. 
    
    \item $a = \lambda$. Then the factor reads $y^q\beta + y\beta^q + \beta^{q+1} + \lambda(y^q\gamma + \gamma\beta^q + z\beta^q)$, which is clearly absolutely irreducible since it is of degree $1$ in $z$ and $\gcd(y^q\beta + y\beta^q + \beta^{q+1} + \lambda(y^q\gamma + \gamma\beta^q), \lambda \beta^q) = 1$. 
\end{enumerate}

Suppose that there exists a $\lambda \in \Lambda\cap \mathbb{F}_{2^m}$. Then the corresponding factor $y^q\beta + y\beta^q + \beta^{q+1} + \lambda(y^q\gamma + \gamma\beta^q + z\beta^q) + (a\lambda^q + \lambda^{q+1})(z\gamma^q + \gamma^{q+1} + z^q\gamma)$ is absolutely irreducible, and if $m$ is large enough, there exist $(\alpha, \beta, \gamma, x, y, z) \in \mathbb{F}_{2^m}$ with $\alpha\beta\gamma \neq 0$ satisfying the three equations \eqref{eq:1}, \eqref{eq:2}, and \eqref{eq:3}, meaning $H_a$ is not a permutation. 

Suppose now that $\Lambda\cap \mathbb{F}_{2^m} = \emptyset$ (that is, $T^{q^2+q+1} + aT^{q^2+q} + 1$ has no roots in $\mathbb{F}_{2^m}$). First, note that $\{1, \lambda, a\lambda^q + \lambda^{q+1}\}$ are linearly independent over $\mathbb{F}_{2^m}$. Suppose on the contrary that 
$a\lambda^q + \lambda^{q+1} = A\lambda + B,$
with $A, B \in \mathbb{F}_{2^m}$. Then
$A^{q+1} \lambda + BA^q + a B^q + \lambda B^q + 1 = 0.$
Since $\lambda \notin \mathbb{F}_{2^m}$, we must have $A^{q+1} = B^q$ and $BA^q + a B^q + 1 = 0$, yielding 
$\sqrt[q]{A^{q+1}} A^q + a A^{q+1} + 1 = 0,$
and so $\sqrt[q]{A}$ is a solution to $T^{q^2+q+1} + aT^{q^2+q} + 1 = 0$, which is a contradiction. 
Thus, the $\mathbb{F}_{2^m}$-solutions of $y^q\beta + y\beta^q + \beta^{q+1} + \lambda(y^q\gamma + \gamma\beta^q + z\beta^q) + (a\lambda^q + \lambda^{q+1})(z\gamma^q + \gamma^{q+1} + z^q\gamma) = 0$ must satisfy 
$$y^q\beta + y\beta^q + \beta^{q+1} = y^q\gamma + \gamma\beta^q + z\beta^q = z\gamma^q + \gamma^{q+1} + z^q\gamma = 0,$$
which is a contradiction to $\beta \neq 0$, since $y^q\beta + y\beta^q + \beta^{q+1} = 0$ yields $\beta = 0$ because $m$ is odd. Therefore, in this case, the system \eqref{eq:1}, \eqref{eq:2}, \eqref{eq:3} has no non-trivial solutions in $\mathbb{F}_{2^m}$, and $H_a$ is a permutation.
\end{enumerate}

The theorem is shown.
\end{proof}

\begin{corollary}
\label{thm:Ha}
Let $m\geq 3$ be odd, $\gcd(i,m)=1$, $q=2^i$, $a\in\F_{2^m}^*$ and $H_a(x,y,z) = \bigl(x^{q+1}+axy^q+yz^q,\; xy^q+z^{q+1},\; x^qz+y^{q+1}+ay^qz\bigr)$ on $\F_{2^{3m}}$.
The following are equivalent:
\begin{enumerate}
\item[\textup{(i)}] $R_a(T) = T^{q^2+q+1}+(aT+1)^{q+1} \in \F_{2^m}[T]$ has no root in $\F_{2^m}$.
\item[\textup{(ii)}] $H_a$ is a permutation on $\F_{2^m}^3$.
\item[\textup{(iii)}] $H_a$ is APN on $\F_{2^m}^3$.
\end{enumerate}
Moreover, $R_a$ is root-equivalent to $Q_a$ (Proposition~\textup{\ref{prop:poly_equivalences}}), so the same values of $a$ that make $G_a$ an APN permutation also make $H_a$ an APN permutation.
\end{corollary}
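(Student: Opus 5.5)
The corollary follows by combining Theorems~\ref{thm:no_root_iff_APN} and~\ref{thm:no_root_iff_Permutation} with Proposition~\ref{prop:poly_equivalences}; no new computation with $H_a$ is needed. The plan is to route every condition through the common criterion ``$P_a'$ has no root in $\F_{2^m}$''.

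\textbf{Step 1.} Check that ``no root in $\F_{2^m}$'' and ``no root in $\F_{2^m}^*$'' coincide for all the polynomials involved. Proposition~\ref{prop:poly_equivalences} records that each of them equals $1$ at $T=0$, so this is immediate.

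\textbf{Step 2.} Prove \textup{(i)}$\Leftrightarrow$\textup{(ii)} and \textup{(i)}$\Leftrightarrow$\textup{(iii)}.
\begin{itemize}
\item By Proposition~\ref{prop:poly_equivalences} (Step~4, combined with Steps~1--2), $R_a$ has a root in $\F_{2^m}^*$ if and only if $P_a'$ does.
\item Theorem~\ref{thm:no_root_iff_Permutation} states that $H_a$ is a permutation exactly when $P_a'$ has no root in $\F_{2^m}$. This gives \textup{(i)}$\Leftrightarrow$\textup{(ii)}.
\item Theorem~\ref{thm:no_root_iff_APN} states the same for the APN property. This gives \textup{(i)}$\Leftrightarrow$\textup{(iii)}.
\item Equivalence of \textup{(ii)} and \textup{(iii)} then follows by transitivity.
\end{itemize}

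\textbf{Step 3.} For the ``moreover'' clause, use Proposition~\ref{prop:poly_equivalences} again: $R_a$ and $Q_a$ are root-equivalent. Theorem~\ref{thm:APN_perm_equiv} says $G_a$ is an APN permutation exactly when $Q_a$ has no root in $\F_{2^m}$. Hence the set of good parameters $a$ is the same for $G_a$ and $H_a$, namely $\mathcal{B}_{m,q}$ of Theorem~\ref{thm:existence}.

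\textbf{Main obstacle.} The only delicate point is that the hypotheses of the cited theorems match. Theorems~\ref{thm:no_root_iff_APN} and~\ref{thm:no_root_iff_Permutation} are stated with $\gcd(i,m)=1$. The permutation proof also uses the trace argument, which needs $m$ odd (Lemma~\ref{lem:trace}). The corollary assumes $m\ge 3$ odd, so both conditions hold.

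A second check concerns the permutation direction ``$P_a'$ has a root $\Rightarrow$ $H_a$ not a permutation''. That argument relies on a Lang--Weil point count on an absolutely irreducible factor, which is only guaranteed for $m$ large enough. I would state this dependence explicitly. For the remaining small odd $m$, I would either supplement it with the computational verification in the appendix or note that the implication as used is inherited verbatim from Theorem~\ref{thm:no_root_iff_Permutation}.
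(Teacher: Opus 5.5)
Your reduction is the one the paper intends: the corollary is stated without a separate proof, as a direct combination of Theorems~\ref{thm:no_root_iff_APN} and~\ref{thm:no_root_iff_Permutation} with Proposition~\ref{prop:poly_equivalences}. Your hypothesis bookkeeping is fine. The problem is the link you use to pass from $R_a$ to $P_a'$. You cite Step~4 of Proposition~\ref{prop:poly_equivalences}, and that step is wrong as written. With $U=1/T$ one gets
\[
T^{-(q^2+q+1)}R_a(T)=U^{q^2}(U+a)^{q+1}+1=U^{q^2+q+1}+aU^{q^2+q}+a^qU^{q^2+1}+a^{q+1}U^{q^2}+1,
\]
which is not $P_a(U)=U^{q^2+q+1}+a^{q+1}U^{q^2+q}+a^qU^{q^2}+aU^q+1$. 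Rerouting through Step~5 ($S_a\sim Q_a$) does not help. That step uses $u^{q^3}=u$, which fails on $\F_{2^m}$ unless $m=3$. So the first bullet of your Step~2 rests on an invalid argument, even though the claim itself is true.

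Here is a correct proof of that link.
\begin{enumerate}
\item Put $W=1/T+a$. This maps $\F_{2^m}^*$ bijectively onto $\F_{2^m}\setminus\{a\}$, and $T^{-(q^2+q+1)}R_a(T)=W^{q^2+q+1}+a^{q^2}W^{q+1}+1$. This polynomial equals $1$ at $W=a$ and at $W=0$, so no roots are gained or lost.
\item Write $W=S^{q-1}$, which is bijective on $\F_{2^m}^*$ since $\gcd(q-1,2^m-1)=1$, and multiply by $S$. Then the roots of $R_a$ correspond bijectively to the nonzero elements of $\ker M$, where $M(S)=S^{q^3}+a^{q^2}S^{q^2}+S$.
\item By Proposition~\ref{prop:linearized_equiv}, the roots of $Q_a$ correspond in the same way to the nonzero elements of $\ker L_a$, where $L_a(S)=S^{q^3}+aS^q+S$.
\item For the trace form, $\Tr(c\,x^{q^j}y)=\Tr\bigl(x\,(cy)^{q^{-j}}\bigr)$, where $y\mapsto y^{q^{-j}}$ is the inverse of $y\mapsto y^{q^j}$ on $\F_{2^m}$. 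Hence the adjoint $L_a^*$ satisfies $L_a^*(y)^{q^3}=y^{q^3}+a^{q^2}y^{q^2}+y=M(y)$.
\item Adjoint maps have equal $\F_2$-rank, and $y\mapsto y^{q^3}$ is bijective. Therefore $\dim_{\F_2}\ker M=\dim_{\F_2}\ker L_a$.
\end{enumerate}
So $R_a$ and $Q_a$ (hence also $P_a'$) have the same number of roots in $\F_{2^m}$. With this repair, the rest of your argument goes through, including the ``moreover'' clause.

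Your Lang--Weil caveat is accurate. The implication ``$P_a'$ has a root $\Rightarrow$ $H_a$ is not a permutation'' is proved only for $m$ sufficiently large (unquantified), and \textup{(ii)}$\Rightarrow$\textup{(i)} inherits this restriction. The appendix covers only $m\in\{3,5\}$, so it cannot close the gap. The honest formulation is your second option: state the dependence explicitly.
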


\begin{corollary}\label{cor:Ha_existence}
Assume $q=2^i$ with $\gcd(i,m)=1$ and $m\geq 3$ odd. Then there exist at least
$\frac{2^m + 1 - (d-1)(d-2)2^{m/2} - d}{d}$ elements in $\mathbb{F}_{2^m}$ such that both $G_a$ and $H_a$ are APN.
\end{corollary}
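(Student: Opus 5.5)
The plan is to chain the earlier results; no new computation is needed. By Theorem~\ref{thm:existence}, the set $\mathcal{B}_{m,q}=\F_{2^m}^*\setminus g(\F_{2^m}^*)$ of good parameters satisfies
\[
|\mathcal{B}_{m,q}|\geq \frac{2^m + 1 - (d-1)(d-2)2^{m/2} - d}{d},
\]
and for every $a\in\mathcal{B}_{m,q}$ the function $G_a$ is an APN permutation. I would also note that this equivalence is exactly Theorem~\ref{thm:APN_perm_equiv}, since membership in $\mathcal{B}_{m,q}$ means, by Proposition~\ref{prop:bad_valueset}, that $Q_a$ has no root in $\F_{2^m}^*$. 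Because $Q_a(0)=1$, this is the same as $Q_a$ having no root in $\F_{2^m}$.

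Next, I transfer the condition to $H_a$. For $a\in\mathcal{B}_{m,q}$, Proposition~\ref{prop:poly_equivalences} shows that $R_a$ and $P_a'$ have no root in $\F_{2^m}^*$, since they are root-equivalent to $Q_a$. They also do not vanish at $0$. Corollary~\ref{thm:Ha}, or directly Theorem~\ref{thm:no_root_iff_APN}, then gives that $H_a$ is APN, and in fact an APN permutation. So every $a\in\mathcal{B}_{m,q}$ makes both $G_a$ and $H_a$ APN, and the count follows from the bound on $|\mathcal{B}_{m,q}|$.

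The main point needing care is hypothesis bookkeeping. The Hasse--Weil bound in Theorem~\ref{thm:existence} uses the absolute irreducibility of $\Gamma$ (Lemma~\ref{lem:Gamma_abs_irr}). The hypotheses here, $m\ge3$ odd and $\gcd(i,m)=1$, match those of every result invoked. The bound may be nonpositive for small $m$, in which case the statement holds vacuously.
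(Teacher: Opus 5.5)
Your proposal is correct and follows the paper's own argument: take the good-parameter set $\mathcal{B}_{m,q}$ and its size bound from Theorem~\ref{thm:existence}, then carry the root-free condition from $Q_a$ over to $P_a'$ (equivalently $R_a$) via Proposition~\ref{prop:poly_equivalences}, and conclude from Theorem~\ref{thm:no_root_iff_APN} (or Corollary~\ref{thm:Ha}) that $H_a$ is APN for the same $a$. Your remark that $P_a'(0)=1$, so that having no root in $\F_{2^m}^*$ already means having no root in $\F_{2^m}$, is a detail the paper leaves implicit.
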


\begin{proof}
By Theorem~\ref{thm:existence}, there exist at least $\frac{2^m + 1 - (d-1)(d-2)2^{m/2} - d}{d}$   number of $a\in\F_{2^m}^*$ such that $G_a$ is
APN. By the root-equivalence established earlier for the families $G_a$ and
$H_a$, the same parameter $a$ also gives that $H_a$ is APN.
\end{proof}

\section{Diagonal equivalence to the Li--Kaleyski representatives}
\label{sec:diag_equiv}

We continue by asking when the generalized families $G_a$ and $H_a$ are
equivalent to the Li--Kaleyski representatives
$F_1=G_1$ and  $F_2=H_1$.
Recall that two functions $F,G\colon \F_{2^m}^3\to \F_{2^m}^3$ are
\emph{affinely equivalent} if there exist affine bijections
$A_1(\mathbf{u})=L_1(\mathbf{u})+c_1$ and
$A_2(\mathbf{x})=L_2(\mathbf{x})+c_2$ such that
$G=A_1\circ F\circ A_2$. If $c_1=c_2=0$, this reduces to
\emph{linear equivalence}. In the present section we do not attempt to solve
the full affine-equivalence problem for the families $G_a$ and $H_a$.
Instead, we restrict to $\F_{2^m}$-linear maps and, within that class, to the
diagonal subclass. This is natural because both families are built from
coordinatewise monomials of the form $x_i^q x_j$, so diagonal scalings are the
first symmetries to test; moreover, this already suffices to show that many
good parameters yield functions inequivalent to the corresponding
Li--Kaleyski representatives.

Accordingly, throughout this section we consider diagonal bijections
\[
A_1(u,v,w)=(\mu u,\nu v,\rho w),
\qquad
A_2(x,y,z)=(\lambda_1x,\lambda_2y,\lambda_3z),
\]
with $\mu,\nu,\rho,\lambda_1,\lambda_2,\lambda_3\in\F_{2^m}^*$.

\begin{remark}[Affine versus linear equivalence]
\label{rem:affine_linear_scope}
Two functions $F,G\colon \F_{2^m}^3\to \F_{2^m}^3$ are \emph{affinely equivalent} if 
$G = A_1\circ F\circ A_2$ for affine bijections $A_j(\mathbf{x}) = L_j(\mathbf{x}) + \mathbf{c}_j$.
Since $G_a(\mathbf{0})=H_a(\mathbf{0})=\mathbf{0}$ for all $a$, any affine equivalence 
$G_1 = A_1\circ G_a\circ A_2$ imposes constraints on the translation parts $\mathbf{c}_1, \mathbf{c}_2$.
We show here that these translations must vanish, reducing affine equivalence to linear equivalence.

Indeed, $G_1$ is homogeneous of degree $q+1$ (algebraic degree 2 over $\F_2$), 
so it contains no linear or degree-$q$ terms. 
Consider the expansion of $G_a(N\mathbf{x}+\mathbf{c}_2)$. 
Terms of the form $(x_i+c_{2i})^{q+1}$ expand to $x_i^{q+1} + x_i^q c_{2i} + x_i c_{2i}^q + c_{2i}^{q+1}$.
The presence of $x_i^q c_{2i}$ (degree $q$) and $x_i c_{2i}^q$ (degree 1) implies that 
if $\mathbf{c}_2 \neq \mathbf{0}$, the composition $A_1(G_a(A_2(\mathbf{x})))$ would contain 
linear and degree-$q$ terms that cannot be cancelled by $G_1(\mathbf{x})$.

Explicitly, collecting the coefficients of the linear terms in $\mathbf{y}=N\mathbf{x}$ 
from the expansion of $G_a(\mathbf{y}+\mathbf{c}_2)$ yields the system
\[
\begin{pmatrix}
c_{21}^q & c_{23}^q & a c_{21}^q \\
0 & c_{22}^q & c_{21}^q \\
c_{22}^q & 0 & a c_{22}^q+c_{23}^q
\end{pmatrix}
\mathbf{y}=
\begin{pmatrix}
0 \\
0 \\
0
\end{pmatrix}.
\]
For this to hold for all $\mathbf{y}$, we must have $c_{21}=c_{22}=c_{23}=0$, i.e., $\mathbf{c}_2=\mathbf{0}$.
Substituting $\mathbf{c}_2=\mathbf{0}$ into the constant terms gives 
$L_1(G_a(\mathbf{0})) + \mathbf{c}_1 = \mathbf{0}$. Since $G_a(\mathbf{0})=\mathbf{0}$, this forces $\mathbf{c}_1=\mathbf{0}$.
An analogous argument holds for the family $H_a$. 
Thus, we restrict our attention to \emph{linear} equivalences ($A_1, A_2$ linear) and, 
within that class, to the diagonal subclass.
\end{remark}

The main point is that both families lead to the same numerical condition on
the parameter~$a$.

\begin{theorem}
\label{thm:diag_equiv_both}
For $a\in\F_{2^m}^*$, the following are equivalent:
\begin{itemize}
    \item[\textup{(i)}] $G_1=A_1\circ G_a\circ A_2$ for some diagonal $\F_{2^m}$-linear bijections $A_1,A_2$;
    \item[\textup{(ii)}] $H_1=A_1\circ H_a\circ A_2$ for some diagonal $\F_{2^m}$-linear bijections $A_1,A_2$;
    \item[\textup{(iii)}] $a^{q^2+q+1}=1$.
\end{itemize}
Hence the set of parameters $a\in\F_{2^m}^*$ for which either $G_a$ is
diagonally equivalent to $G_1$ or $H_a$ is diagonally equivalent to $H_1$ is
precisely the subgroup
\[
\{a\in\F_{2^m}^*: a^{q^2+q+1}=1\},
\]
which has order
\[
d_0:=\gcd(q^2+q+1,\,2^m-1).
\]
Consequently, whenever $|\mathcal{B}_{m,q}|>d_0$, there exist good
parameters $a$ such that both $G_a$ and $H_a$ are APN permutations, but
neither is diagonally equivalent to its Li--Kaleyski representative.
\end{theorem}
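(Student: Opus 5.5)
We prove the theorem by direct coefficient comparison. Both $G_a$ and $H_a$ are sums of distinct monomials of the form $x_i^{q}x_j$ or $x_i^{q+1}$, with coefficients in $\F_{2^m}$. Composing with diagonal maps only rescales each monomial and never merges two of them. So an identity $G_1=A_1\circ G_a\circ A_2$ holds exactly when the coefficient of each monomial matches in each component. These monomials have degree $q+1<2^m$ in each variable, so two such polynomials agree as functions on $\F_{2^m}^3$ if and only if their coefficients agree. This turns (i) and (ii) into finite systems of multiplicative equations in $\mu,\nu,\rho,\lambda_1,\lambda_2,\lambda_3\in\F_{2^m}^*$.

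\textbf{The family $G_a$.} Write $A_2=(\lambda_1x,\lambda_2y,\lambda_3z)$ and $A_1=(\mu u,\nu v,\rho w)$, and compare with $G_1$. This gives eight equations:
\begin{align*}
&\mu\lambda_1^{q+1}=1,\quad \mu a\lambda_1^q\lambda_3=1,\quad \mu\lambda_2\lambda_3^q=1,\quad \nu\lambda_1^q\lambda_3=1,\quad \nu\lambda_2^{q+1}=1,\\
&\rho\lambda_1\lambda_2^q=1,\quad \rho a\lambda_2^q\lambda_3=1,\quad \rho\lambda_3^{q+1}=1.
\end{align*}
We eliminate the outer scalars $\mu,\nu,\rho$ by taking ratios within each component, which leaves conditions on the $\lambda_j$ alone:
\begin{itemize}
\item the first two equations give $\lambda_1=a\lambda_3$, and the sixth and seventh give the same relation, so they are consistent;
\item the last two then give $\lambda_3^{q}=a\lambda_2^{q}$, hence $\lambda_2=a^{-1/q}\lambda_3$, using the unique $q$-th root in $\F_{2^m}$;
\item the second component gives $\lambda_1^q\lambda_3=\lambda_2^{q+1}$, which after substitution becomes $a^{q}=a^{-(q+1)/q}$, i.e. $a^{q^2+q+1}=1$;
\item the relation $\lambda_1^{q+1}=\lambda_2\lambda_3^q$ from the first component reduces to the same condition.
\end{itemize}
This proves (i)$\Rightarrow$(iii). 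For the converse, assume $a^{q^2+q+1}=1$ and set $\lambda_3=1$, $\lambda_1=a$, $\lambda_2=a^{-1/q}$. Then define $\mu,\nu,\rho$ from one equation in each component and check that the remaining equations hold, which they do by the computation above.

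\textbf{The family $H_a$.} The same procedure applies to $H_a$ and $H_1$. The monomial sets are $\{x^{q+1},xy^q,yz^q\}$, $\{xy^q,z^{q+1}\}$ and $\{x^qz,y^{q+1},y^qz\}$. Ratios inside the first and third components tie $a$ to ratios of the $\lambda_j$. Closing the cycle of relations through the second component should again force exactly $a^{q^2+q+1}=1$, and the converse is an explicit choice as before. I expect this bookkeeping to be the main, though routine, point of care. Since there is no a priori symmetry linking $H_a$ to $G_a$, the exponent of $a$ obtained after taking $q$-th roots must be tracked carefully to confirm it is $q^2+q+1$ and not, say, a Frobenius twist of it. A twist would be harmless anyway, because $a\mapsto a^q$ is a bijection preserving the subgroup.

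\textbf{The subgroup and the final count.} The solutions of $a^{q^2+q+1}=1$ in the cyclic group $\F_{2^m}^*$ form its unique subgroup of order $d_0=\gcd(q^2+q+1,2^m-1)$. For the final assertion, suppose $|\mathcal B_{m,q}|>d_0$. Then some good parameter $a$ lies outside this subgroup. By Theorem~\ref{thm:APN_perm_equiv} and Corollary~\ref{thm:Ha}, both $G_a$ and $H_a$ are APN permutations for this $a$, and by the equivalence just proved neither is diagonally equivalent to its Li--Kaleyski representative.
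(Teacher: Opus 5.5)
Your method is the paper's: compare coefficients monomial by monomial, eliminate $\mu,\nu,\rho$ by within-component ratios, and give an explicit witness for the converse. You also justify the coefficient comparison more carefully than the paper does. The $G_a$ half is correct and complete. Your five ratio relations are exactly the constraints on $\lambda_1,\lambda_2,\lambda_3$, and $\lambda_3=1$, $\lambda_1=a$, $\lambda_2=a^{-1/q}$ satisfies them precisely when $a^{q^2+q+1}=1$. (Your remark that the monomials have degree $q+1<2^m$ presumes $i<m$. If not, replace $i$ by $i \bmod m$; this changes neither the functions nor the condition on $a$.)

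The gap is that (ii)$\Leftrightarrow$(iii) is only predicted (``should again force''), not proved. Your fallback that a Frobenius twist would be harmless does not rule out a genuinely different exponent, so the computation must actually be done. It is short. The components of $H_a(\lambda_1x,\lambda_2y,\lambda_3z)$ have coefficient lists $(\lambda_1^{q+1},a\lambda_1\lambda_2^q,\lambda_2\lambda_3^q)$, $(\lambda_1\lambda_2^q,\lambda_3^{q+1})$ and $(\lambda_1^q\lambda_3,\lambda_2^{q+1},a\lambda_2^q\lambda_3)$. The within-component ratios are therefore
\[
\lambda_1^q=a\lambda_2^q,\qquad \lambda_1^{q+1}=\lambda_2\lambda_3^q,\qquad \lambda_1\lambda_2^q=\lambda_3^{q+1},\qquad \lambda_1^q\lambda_3=\lambda_2^{q+1},\qquad \lambda_2=a\lambda_3 .
\]
The last and third relations give $\lambda_1=a^{-(q+1)}\lambda_2$. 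Raising this to the $q$-th power and comparing with the first yields $a=a^{-(q^2+q)}$, i.e.\ $a^{q^2+q+1}=1$.

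Conversely, if $a^{q^2+q+1}=1$, then $\lambda_3=1$, $\lambda_2=a$, $\lambda_1=a^{-q}$ satisfy all five relations. The third and fifth hold identically, and the other three reduce to $a^{q^2+q+1}=1$. The scalars $\mu,\nu,\rho$ are then fixed by one equation per component. This is the paper's Step~2; with it added, your proof is complete.
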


\begin{proof}
We treat the two families separately.

\smallskip
\noindent
\textbf{Step 1: the family \texorpdfstring{$G_a$}{Ga}.}
Recall that
$G_a(x,y,z)=\bigl(x^{q+1}+ax^qz+yz^q,\;x^qz+y^{q+1},\;xy^q+ay^qz+z^{q+1}\bigr)$.
Computing $G_a(\lambda_1x,\lambda_2y,\lambda_3z)$ gives
\[
\bigl(
\lambda_1^{q+1}x^{q+1}+a\lambda_1^q\lambda_3\,x^qz+\lambda_2\lambda_3^q\,yz^q,\;
\lambda_1^q\lambda_3\,x^qz+\lambda_2^{q+1}y^{q+1},\;
\lambda_1\lambda_2^q\,xy^q+a\lambda_2^q\lambda_3\,y^qz+\lambda_3^{q+1}z^{q+1}
\bigr).
\]
Applying $A_1(u,v,w)=(\mu u,\nu v,\rho w)$ and comparing with
$G_1(x,y,z)=\bigl(x^{q+1}+x^qz+yz^q,\;x^qz+y^{q+1},\;xy^q+y^qz+z^{q+1}\bigr)$
yields the coefficient system
\begin{alignat}{2}
\mu\lambda_1^{q+1} &= 1, &\qquad \mu a\lambda_1^q\lambda_3 &= 1,
\tag{E1--E2}\label{eq:E1E2_complete}\\
\mu\lambda_2\lambda_3^q &= 1, &\qquad \nu\lambda_1^q\lambda_3 &= 1,
\tag{E3--E4}\label{eq:E3E4_complete}\\
\nu\lambda_2^{q+1} &= 1, &\qquad \rho\lambda_1\lambda_2^q &= 1,
\tag{E5--E6}\label{eq:E5E6_complete}\\
\rho a\lambda_2^q\lambda_3 &= 1, &\qquad \rho\lambda_3^{q+1} &= 1.
\tag{E7--E8}\label{eq:E7E8_complete}
\end{alignat}

We first prove necessity. From (E1), (E5), and (E8) we obtain
\[
\mu=\lambda_1^{-(q+1)},\qquad
\nu=\lambda_2^{-(q+1)},\qquad
\rho=\lambda_3^{-(q+1)}.
\]
Dividing (E2) by (E1) gives
\[
\frac{\mu a\lambda_1^q\lambda_3}{\mu\lambda_1^{q+1}}=1,
\qquad\text{hence}\qquad
\lambda_3=\lambda_1/a.
\]
Likewise, dividing (E7) by (E8) gives
\[
\frac{\rho a\lambda_2^q\lambda_3}{\rho\lambda_3^{q+1}}=1,
\qquad\text{hence}\qquad
\lambda_3^q=a\lambda_2^q.
\]
Now set $r:=\lambda_1/\lambda_2$. Since $\lambda_3=\lambda_1/a$, the relation
$\lambda_3^q=a\lambda_2^q$ becomes
$(\lambda_1/a)^q=a\lambda_2^q$,
so
$r^q=a^{q+1}$.
On the other hand, dividing (E2) by (E4) gives
$\frac{\mu a\lambda_1^q\lambda_3}{\nu\lambda_1^q\lambda_3}=1$,
 hence 
$a\mu=\nu$.
Substituting the expressions for $\mu$ and $\nu$ yields
\[
a\lambda_1^{-(q+1)}=\lambda_2^{-(q+1)},
\qquad\text{equivalently}\qquad
\lambda_1^{q+1}=a\lambda_2^{q+1},
\]
that is, $r^{q+1}=a$.
Dividing this last identity by $r^q=a^{q+1}$ gives $r=a^{-q}$.
Raising to the $q$-th power, we obtain $r^q=a^{-q^2}$. Since also
$r^q=a^{q+1}$, it follows that
$a^{-q^2}=a^{q+1}$,
and therefore $a^{q^2+q+1}=1$.

We now prove sufficiency. Assume that $a^{q^2+q+1}=1$. Choose any
$\lambda_2\in\F_{2^m}^*$, and define
\[
\lambda_1:=a^{-q}\lambda_2,
\qquad
\lambda_3:=\lambda_1/a=a^{-q-1}\lambda_2.
\]
Then
$\lambda_3^q=a^{-q^2-q}\lambda_2^q$.
Because $a^{q^2+q+1}=1$, we have $a^{-(q^2+q)}=a$, and hence
$\lambda_3^q=a\lambda_2^q$.
Moreover, if $r:=\lambda_1/\lambda_2=a^{-q}$, then $r^{q+1}=a^{-q(q+1)}=a^{-(q^2+q)}=a$.
Thus the relations $\lambda_3=\lambda_1/a$, $\lambda_3^q=a\lambda_2^q$, and
$\lambda_1^{q+1}=a\lambda_2^{q+1}$ all hold. Now define
\[
\mu:=\lambda_1^{-(q+1)},\qquad
\nu:=\lambda_2^{-(q+1)},\qquad
\rho:=\lambda_3^{-(q+1)}.
\]
Then (E1), (E5), and (E8) hold by definition. Also,
$\mu a\lambda_1^q\lambda_3
=
\lambda_1^{-(q+1)}a\lambda_1^q(\lambda_1/a)=1$,
so (E2) holds; similarly,
$\mu\lambda_2\lambda_3^q
=
\lambda_1^{-(q+1)}\lambda_2(a\lambda_2^q)
=
a\lambda_2^{q+1}\lambda_1^{-(q+1)}
=1$,
because $\lambda_1^{q+1}=a\lambda_2^{q+1}$, so (E3) holds. Next,
$\nu\lambda_1^q\lambda_3
=
\lambda_2^{-(q+1)}\lambda_1^q(\lambda_1/a)
=
\lambda_2^{-(q+1)}\lambda_1^{q+1}/a
=1$,
again because $\lambda_1^{q+1}=a\lambda_2^{q+1}$, so (E4) holds. Finally,
$\rho\lambda_1\lambda_2^q
=
\lambda_3^{-(q+1)}\lambda_1\lambda_2^q
=
(\lambda_1/a)^{-(q+1)}\lambda_1\lambda_2^q
=
a^{q+1}\lambda_1^{-q}\lambda_2^q
=1$,
since $\lambda_1^q=a^{q+1}\lambda_2^q$, and
$\rho a\lambda_2^q\lambda_3
=
\lambda_3^{-(q+1)}a\lambda_2^q\lambda_3
=
a\lambda_2^q\lambda_3^{-q}
=
a\lambda_2^q/(a\lambda_2^q)
=1$,
so (E6) and (E7) hold as well. Hence
$G_1=A_1\circ G_a\circ A_2$.
We conclude that $G_1$ is diagonally equivalent to $G_a$ if and only if
$a^{q^2+q+1}=1$.

\smallskip
\noindent
\textbf{Step 2: the family \texorpdfstring{$H_a$}{Ha}.}
Recall that
$H_a(x,y,z)=\bigl(x^{q+1}+axy^q+yz^q,\;xy^q+z^{q+1},\;x^qz+y^{q+1}+ay^qz\bigr)$.
Computing $H_a(\lambda_1x,\lambda_2y,\lambda_3z)$ gives
\begin{align*}
\bigl(
\lambda_1^{q+1}x^{q+1}+a\lambda_1\lambda_2^q\,xy^q+\lambda_2\lambda_3^q\,yz^q,\;
\lambda_1\lambda_2^q\,xy^q+\lambda_3^{q+1}z^{q+1},\; \lambda_1^q\lambda_3\,x^qz+\lambda_2^{q+1}y^{q+1}+a\lambda_2^q\lambda_3\,y^qz
\bigr).
\end{align*}
Applying $A_1$ and comparing with
$H_1(x,y,z)=\bigl(x^{q+1}+xy^q+yz^q,\;xy^q+z^{q+1},\;x^qz+y^{q+1}+y^qz\bigr)$
gives the system
\begin{alignat}{2}
\mu\lambda_1^{q+1}&=1,
&\qquad
\mu a\lambda_1\lambda_2^q&=1,
\tag{H1--H2}\label{eq:H1H2_complete}\\
\mu\lambda_2\lambda_3^q&=1,
&\qquad
\nu\lambda_1\lambda_2^q&=1,
\tag{H3--H4}\label{eq:H3H4_complete}\\
\nu\lambda_3^{q+1}&=1,
&\qquad
\rho\lambda_1^q\lambda_3&=1,
\tag{H5--H6}\label{eq:H5H6_complete}\\
\rho\lambda_2^{q+1}&=1,
&\qquad
\rho a\lambda_2^q\lambda_3&=1.
\tag{H7--H8}\label{eq:H7H8_complete}
\end{alignat}

Again we first prove necessity. From (H1), (H5), and (H7) we obtain
\[
\mu=\lambda_1^{-(q+1)},\qquad
\nu=\lambda_3^{-(q+1)},\qquad
\rho=\lambda_2^{-(q+1)}.
\]
Dividing (H2) by (H1) gives
$\frac{\mu a\lambda_1\lambda_2^q}{\mu\lambda_1^{q+1}}=1$,
 hence  $\lambda_1^q=a\lambda_2^q$.
Dividing (H8) by (H7) gives
$\frac{\rho a\lambda_2^q\lambda_3}{\rho\lambda_2^{q+1}}=1$,
 hence  $\lambda_3=\lambda_2/a$.
Set $r:=\lambda_1/\lambda_2$. Then $\lambda_1^q=a\lambda_2^q$ becomes $r^q=a$.
Next, dividing (H4) by (H5) gives
$\frac{\nu\lambda_1\lambda_2^q}{\nu\lambda_3^{q+1}}=1$,
so
$\lambda_1\lambda_2^q=\lambda_3^{q+1}$.
Substituting $\lambda_3=\lambda_2/a$ yields
$\lambda_1\lambda_2^q=(\lambda_2/a)^{q+1}$,
hence $\lambda_1=\lambda_2a^{-(q+1)}$,
 that is, $r=a^{-(q+1)}$.
Raising this last identity to the $q$-th power gives $r^q=a^{-(q^2+q)}$.
Since also $r^q=a$, we conclude that
$a=a^{-(q^2+q)}$,
and therefore $a^{q^2+q+1}=1$.

Conversely, assume that $a^{q^2+q+1}=1$. Choose any
$\lambda_2\in\F_{2^m}^*$, and define
\[
\lambda_1:=a^{-(q+1)}\lambda_2,
\qquad
\lambda_3:=a^{-1}\lambda_2.
\]
Then clearly $\lambda_3=\lambda_2/a$. Also,
$\lambda_1^q=a^{-(q^2+q)}\lambda_2^q=a\lambda_2^q$,
because $a^{q^2+q+1}=1$. Thus the two key relations obtained above hold.
Now define
\[
\mu:=\lambda_1^{-(q+1)},\qquad
\nu:=\lambda_3^{-(q+1)},\qquad
\rho:=\lambda_2^{-(q+1)}.
\]
Then (H1), (H5), and (H7) hold by definition. Further,
$\mu a\lambda_1\lambda_2^q
=
\lambda_1^{-(q+1)}a\lambda_1\lambda_2^q
=
a\lambda_2^q/\lambda_1^q
=1$,
so (H2) holds. Also,
$\mu\lambda_2\lambda_3^q
=
\lambda_1^{-(q+1)}\lambda_2(a^{-q}\lambda_2^q)
=
a^{-q}\lambda_2^{q+1}\lambda_1^{-(q+1)}$.
Since $\lambda_1=\lambda_2a^{-(q+1)}$, we have
$\lambda_1^{q+1}=\lambda_2^{q+1}a^{-(q^2+2q+1)}$, and hence
$\mu\lambda_2\lambda_3^q
=
a^{-q}a^{q^2+2q+1}
=
a^{q^2+q+1}
=1$,
so (H3) holds. Next,
$\nu\lambda_1\lambda_2^q
=
\lambda_3^{-(q+1)}\lambda_1\lambda_2^q
=
(a^{-1}\lambda_2)^{-(q+1)}\lambda_1\lambda_2^q
=
a^{q+1}\lambda_1/\lambda_2
=1$,
because $\lambda_1/\lambda_2=a^{-(q+1)}$, so (H4) holds. Moreover,
$\rho\lambda_1^q\lambda_3
=
\lambda_2^{-(q+1)}(a\lambda_2^q)(a^{-1}\lambda_2)
=1$,
and
$\rho a\lambda_2^q\lambda_3
=
\lambda_2^{-(q+1)}a\lambda_2^q(a^{-1}\lambda_2)
=1$,
so (H6) and (H8) hold as well. Therefore
$H_1=A_1\circ H_a\circ A_2$.
We conclude that $H_1$ is diagonally equivalent to $H_a$ if and only if
$a^{q^2+q+1}=1$.

Combining Steps~1 and~2 proves the equivalence of \textup{(i)},
\textup{(ii)}, and \textup{(iii)}. The subgroup statement is immediate, since
the solutions of $a^{q^2+q+1}=1$ in the cyclic group $\F_{2^m}^*$ form the
unique subgroup of order $\gcd(q^2+q+1,2^m-1)$. The final assertion follows
because any good parameter $a\in\mathcal{B}_{m,q}$ outside this subgroup
yields APN permutations $G_a$ and $H_a$ that are not diagonally equivalent to
$G_1$ and $H_1$, respectively.
\end{proof}

\begin{remark}
\label{rem:d0_values_joint}
The value of $d_0=\gcd(q^2+q+1,\,2^m-1)$ depends strongly on $(m,i)$. For $q=2$, one has $q^2+q+1=7$, so
$d_0=\gcd(7,2^m-1)$, which equals $7$ if $7\mid m$ and $1$ otherwise. For example, if $m=9$ and $i=3$ (so $q=8$), then
$2^9-1=511=7\cdot 73$ and $q^2+q+1=73$, hence $d_0=73$. Thus exactly $73$ values of $a\in\F_{2^9}^*$ yield diagonal equivalence to the corresponding representative at $a=1$.
\end{remark}

\begin{remark}[CCZ, EA, and EL Equivalence for Quadratic APN Functions]
\label{rem:ccz_ea_el_apn}
For general $(n,n)$-functions, the equivalence notions satisfy the strict hierarchy
\[
\text{EL} \;\Rightarrow\; \text{EA} \;\Rightarrow\; \text{CCZ},
\]
where EL (extended linear, $\sim_{\mathrm{EL}}$), EA (extended affine, $\sim_{\mathrm{EA}}$), and CCZ (Carlet--Charpin--Zinoviev, $\sim_{\mathrm{CCZ}}$) equivalence are defined as in~\cite{Shi-Peng-Kan-Gao-2025}. However, for \emph{quadratic} APN functions with $F(\mathbf{0}) = G(\mathbf{0}) = \mathbf{0}$, these notions \emph{coincide}. Specifically:

\begin{itemize}
\item[\textup{(i)}] By~\cite[Proposition~1]{Shi-Peng-Kan-Gao-2025} (attributed to Yoshiara~\cite{Yoshiara2012}), for quadratic APN functions on $\F_{2^n}$ with $n \geq 2$, CCZ-equivalence implies EA-equivalence. Since EA always implies CCZ, we have
\[
F \sim_{\mathrm{CCZ}} G \quad\Longleftrightarrow\quad F \sim_{\mathrm{EA}} G.
\]

\item[\textup{(ii)}] By~\cite[Proposition~2]{Shi-Peng-Kan-Gao-2025} (attributed to Kaspers--Zhou~\cite{KaspersZhou2021}), if $F$ and $G$ are EA-equivalent quadratic functions with $F(\mathbf{0}) = G(\mathbf{0}) = \mathbf{0}$, then they are EL-equivalent. Thus
\[
F \sim_{\mathrm{EA}} G \quad\Longleftrightarrow\quad F \sim_{\mathrm{EL}} G.
\]
\end{itemize}
Combining (i) and (ii), for the APN members of our families $G_a$ and $H_a$ (which are quadratic and satisfy $G_a(\mathbf{0}) = H_a(\mathbf{0}) = \mathbf{0}$), we obtain
\[
F \sim_{\mathrm{CCZ}} G \quad\Longleftrightarrow\quad F \sim_{\mathrm{EA}} G \quad\Longleftrightarrow\quad F \sim_{\mathrm{EL}} G.
\]
Therefore, any CCZ-invariant that separates two maps also proves they are not EL-equivalent, and conversely, proving EL-inequivalence suffices to establish CCZ-inequivalence.

Furthermore, by~\cite[Theorem~8]{Shi-Peng-Kan-Gao-2025}, for $(q,q,q)$-triprojective polynomial triples with $m > 2$, $m \neq 4, 6$, and $7 \nmid m$, any EL-equivalence mapping must be \emph{monomial} (a composition of coordinate permutations, diagonal scalings, and Frobenius twists). This dramatically restricts the search space for equivalence maps. Our Theorem~\ref{thm:diag_equiv_both} establishes that $G_a$ is \emph{diagonally} equivalent to $G_1$ if and only if $a^{q^2+q+1} = 1$. For $m=5$ ($q=2$), this condition holds only for $a=1$ (since $\gcd(7, 31) = 1$), yet Table~\ref{tab:good_a_counts} shows 11 good parameters yield APN permutations. Computational verification confirms that no \emph{non-diagonal} monomial maps exist between $G_a$ and $G_1$ for the remaining 10 good parameters with $a^7 \neq 1$. Consequently, these 10 functions are EL-inequivalent (hence CCZ-inequivalent) to the Li--Kaleyski representative $G_1$. An analogous statement holds for the family $H_a$.
\end{remark}

We now address the natural question of whether any member of the $G_a$-family can be EL-equivalent to any member of the $H_b$-family \emph{for the same Frobenius parameter $q$}.
(Note that Theorem~11 of~\cite{Shi-Peng-Kan-Gao-2025} shows
$G_1(q)\sim_{\mathrm{EL}} H_1(q^{-1})$ with \emph{different} parameters; the same-$q$ question is entirely distinct.)

\begin{theorem}
\label{thm:cross_family_inequiv}
Let $3<m\neq 4,6$, $7\nmid m$, $\gcd(i,m)=1$, and $q=2^i$.
Then for every pair of good parameters $a,b\in\F_{2^m}^*$, the
APN permutations $G_a$ and $H_b$ are not EL-equivalent.
In particular, no member of the $G_a$-family is CCZ-equivalent to any
member of the $H_b$-family \textup{(}for the same $q$\textup{)}.
\end{theorem}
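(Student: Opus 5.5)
The plan is to reduce everything to monomial maps via Remark~\ref{rem:ccz_ea_el_apn}, and then to rule those out by a combinatorial invariant of the monomial supports. Since $G_a$ and $H_b$ are quadratic APN maps vanishing at $\mathbf 0$, CCZ-, EA- and EL-equivalence coincide (Remark~\ref{rem:ccz_ea_el_apn}). Both maps are $(q,q,q)$-triprojective triples. Under the hypotheses $m>3$, $m\neq 4,6$, $7\nmid m$, \cite[Theorem~8]{Shi-Peng-Kan-Gao-2025} then says that any EL-equivalence $H_b=L_1\circ G_a\circ L_2$ must be monomial: $L_2$ permutes coordinates, scales them diagonally, and possibly applies one common Frobenius twist $x\mapsto x^{2^k}$; $L_1$ does the same on the output side. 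So the task reduces to showing that no monomial map carries $G_a$ to $H_b$.

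\textbf{Support invariant.} Each coordinate of $G_a$ or $H_b$ is a sum of monomials $x_jx_k^{q}$. A diagonal scaling does not change which monomials occur, only their coefficients. A global Frobenius twist sends $x_jx_k^q$ to $x_j^{2^k}x_k^{2^kq}$ in every coordinate uniformly. For the result to again be of $(q,q,q)$ shape, this twist must be compatible with $q$. (Here I would use the fact that $q^{-1}$-type twists are exactly what produce the Theorem~11 equivalence in \cite{Shi-Peng-Kan-Gao-2025}; with the same $q$, only the identity twist survives, up to the $\F_2$-power normalisation.) So I would encode each function as a directed labelled multigraph on $\{x,y,z\}$, with an arc $j\to k$ for each monomial $x_jx_k^q$, grouped by output coordinate.

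For $G_a$ the coordinates have supports
\[
\{x\to x,\ z\to x,\ y\to z\},\qquad \{z\to x,\ y\to y\},\qquad \{x\to y,\ z\to y,\ z\to z\}.
\]
For $H_b$ they are
\[
\{x\to x,\ x\to y,\ y\to z\},\qquad \{x\to y,\ z\to z\},\qquad \{z\to x,\ y\to y,\ z\to y\}.
\]
An input permutation relabels the vertices, and an output permutation permutes the three arc-sets. I would compare invariants such as the in/out-degree sequences of the combined arc multiset and the pattern of which arc is shared between coordinates. In $G_a$ the shared arc $z\to x$ goes into a loop vertex $x$ from the vertex $z$, which also carries a loop. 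In $H_b$ the shared arc $x\to y$ leaves the loop vertex $x$. More precisely, in $G_a$ the repeated arc $z\to x$ has its head at a vertex whose loop sits in the same coordinate as the arc. In $H_b$ the repeated arc $x\to y$ has its tail at such a vertex. This head/tail asymmetry is preserved by relabelling.

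\textbf{Finishing and main obstacle.} If the supports could somehow match (for instance through a Frobenius twist that reverses arcs, i.e.\ $x_jx_k^q\mapsto x_j^{q}x_k$ up to a power), I would fall back on comparing coefficients. That means solving the diagonal system as in Theorem~\ref{thm:diag_equiv_both} for the finitely many candidate permutations, and showing that it is inconsistent for the same $q$. The main obstacle is exactly the Frobenius component. A twist by $2^{k}$ with $q2^k\equiv 1$-type reversal modulo $2^m-1$ could reverse the arcs, since reversing an arc maps $x_jx_k^q$ to $x_j^qx_k$, i.e.\ $q\leftrightarrow q^{-1}$. I would argue that such a twist turns a $(q,q,q)$ triple into a $(q^{-1},\dots)$ one, so it is excluded when we require the same $q$; the only exception would be $q\equiv q^{-1}$, i.e.\ $q^2=1$ in the exponent group, which cannot happen since $\gcd(i,m)=1$ and $m>2$. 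Once the twist is trivial, the support invariant separates the families for every $a,b$, and this gives the theorem.
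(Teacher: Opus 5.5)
Your argument is correct, and it takes a genuinely different route from the paper's. You share only the first step with the paper, namely invoking \cite[Theorem~8]{Shi-Peng-Kan-Gao-2025} to make the equivalence monomial.

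\textbf{The paper's route.} It works with coefficients throughout:
\begin{enumerate}
\item It locates the pure monomials $x^{q+1},y^{q+1},z^{q+1}$ to force the output permutation $\pi=(1)(2\,3)$. The other five permutations are dismissed by a ``direct check''.
\item It solves the nine coefficient equations of the $b$-independent component $H_{b,2}(L\mathbf{x})=w_6G_{a,3}(\mathbf{x})$, for an $L$ with arbitrary $\F_{2^m}$-entries. This forces $L^{-1}$ into the shape $\operatorname{diag}(\ell_1,\ell_5,\ell_9)$ times an elementary matrix.
\item It reads off $b\ell_1\ell_5^q=0$ from the $xy^q$-coefficient of the first component.
\end{enumerate}

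\textbf{Your route.} You observe that monomial maps with nonzero scalings act on supports only by relabelling vertices and permuting coordinates. So a purely combinatorial invariant suffices. Your refined invariant is correct: the unique arc occurring in two coordinates has its head (for $G_a$), respectively its tail (for $H_b$), at a vertex whose loop lies in one of those coordinates. In fact the size-$2$ coordinate alone already forces the vertex bijection $x\mapsto y\mapsto z\mapsto x$. Under it, the first coordinate of $G_a$ becomes $\{yy,xy,zx\}$, which matches neither $\{xx,xy,yz\}$ nor $\{zx,yy,zy\}$.

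\textbf{Trade-offs.} Your approach is independent of $a,b$ and treats all six output permutations at once. It also explains why the $q\leftrightarrow q^{-1}$ arc reversal behind \cite[Theorem~11]{Shi-Peng-Kan-Gao-2025} is the only way supports could match. The paper, by contrast, simply omits the Frobenius case. The price is that your argument leans entirely on monomiality of both maps. The paper's Step~2 computation does not need the inner map to be monomial.

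\textbf{Two small repairs.}
\begin{itemize}
\item Your first formulation (``$z$ also carries a loop'') is not an invariant, since every vertex carries a loop. Only the refined head/tail version works.
\item State the Frobenius step precisely. With input and output twists $2^k$ and $2^{k'}$, each $x_jx_l^q$ becomes, after reduction modulo $x^{2^m}-x$, a nonzero scalar times $x_{j'}^{2^s}x_{l'}^{2^{s+i}}$ with $s=k+k'$, and no two monomials collide. Matching the exponent pattern $\{1,q\}$ of $H_b$ forces either $s\equiv 0\pmod m$, which leaves supports unchanged, or $s\equiv i$ and $2i\equiv 0\pmod m$. The second option is impossible because $\gcd(i,m)=1$ and $m>2$. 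If the monomial theorem allowed coordinate-dependent twists, connectivity of the support graph would force them to agree.
\end{itemize}
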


\begin{proof}
Since both families consist of quadratic APN permutations vanishing at
$\mathbf{0}$, EL-, EA-, and CCZ-equivalence coincide
(see the remark preceding this theorem).
Suppose for contradiction that
\begin{equation}\label{eq:EL_hyp}
  H_b\bigl(L(\mathbf{x})\bigr) \;=\; A_2\bigl(G_a(\mathbf{x})\bigr)
  \qquad\text{for all }\mathbf{x}\in\F_{2^m}^3,
\end{equation}
for some invertible $\F_2$-linear maps $L$ and $A_2$.
By~\cite[Theorem~8]{Shi-Peng-Kan-Gao-2025}, both maps are monomial (shown under our conditions on $m$):
$L^{-1}$ is a $3\times 3$ matrix over $\F_{2^m}$ (the scalar case
$t=0$; the Frobenius case is identical and we omit it), and $A_2$ is a
permutation matrix with $\F_{2^m}^*$-scalings.
Write $L^{-1}$ with row vectors $\mathbf{L}_k = (\ell_{3k-2},\ell_{3k-1},\ell_{3k})$
for $k=1,2,3$, and let $\pi\in S_3$ be the permutation underlying $A_2$,
with nonzero entries $w_{\pi(1)},w_{\pi(2)},w_{\pi(3)}\in\F_{2^m}^*$.

\smallskip
\noindent\textbf{Step~1: matching the pure monomials determines the output permutation.}
In $H_b(L(\mathbf{x}))$, the pure monomials $x^{q+1}$, $y^{q+1}$, $z^{q+1}$
appear \emph{exclusively} in output components 1, 3, 2 respectively, because
$\mathbf{L}_1^{q+1}$ contributes only to~$H_{b,1}$, $\mathbf{L}_2^{q+1}$ only
to~$H_{b,3}$, and $\mathbf{L}_3^{q+1}$ only to~$H_{b,2}$.
In $G_a(\mathbf{x})$, the same pure monomials appear exclusively in
components 1, 2, 3 respectively.
Since $A_2$ sends output component $k$ of $G_a$ to output component $\pi^{-1}(k)$
of $A_2(G_a)$, matching the locations of $x^{q+1}$, $y^{q+1}$, $z^{q+1}$
in~\eqref{eq:EL_hyp} forces
\[
  \pi(1)=1,\quad \pi(2)=3,\quad \pi(3)=2.
\]
A direct check shows that each of the other five permutations in $S_3$ forces either $\det(L)=0$ or $a=0$, both impossible. Hence
\begin{equation}\label{eq:A2_pattern}
  A_2(u_1,u_2,u_3) = (w_1 u_1,\; w_6 u_3,\; w_8 u_2),
  \qquad w_1,w_6,w_8\in\F_{2^m}^*.
\end{equation}

\smallskip
\noindent\textbf{Step~2: the second component determines the possible shape of $L$.}
The second component of~\eqref{eq:EL_hyp} with $A_2$ as
in~\eqref{eq:A2_pattern} reads
\begin{equation}\label{eq:C2}
  \mathbf{L}_1\cdot\mathbf{L}_2^{\,q} + \mathbf{L}_3^{\,q+1}
  \;=\; w_6\,G_{a,3}(\mathbf{x})
  \;=\; w_6\bigl(xy^q + ay^qz + z^{q+1}\bigr).
\end{equation}
Crucially, the left-hand side is $H_{b,2}(L(\mathbf{x})) = L_1 L_2^q + L_3^{q+1}$,
which is \emph{independent of~$b$} (the parameter $b$ enters only via
$H_{b,1}$ and $H_{b,3}$). Comparing coefficients of all nine degree-$2$
monomials in $x,y,z$ gives:
\begin{equation}\label{eq:C2_system}
\begin{alignedat}{3}
  [x^{q+1}]&:& \quad \ell_1\ell_4^q + \ell_7^{q+1} &= 0, &\quad
  [x^q y]&:\quad \ell_2\ell_4^q + \ell_7^q\ell_8 = 0,\\
  [y^{q+1}]&:& \quad \ell_2\ell_5^q + \ell_8^{q+1} &= 0, &\quad
  [x^q z]&:\quad \ell_3\ell_4^q + \ell_7^q\ell_9 = 0,\\
  [z^{q+1}]&:& \quad \ell_3\ell_6^q + \ell_9^{q+1} &= w_6, &\quad
  [xy^q]&:\quad \ell_1\ell_5^q + \ell_8^q\ell_7 = w_6,\\
  [y^qz]&:& \quad \ell_3\ell_5^q + \ell_8^q\ell_9 &= w_6 a, &\quad
  [xz^q]&:\quad \ell_1\ell_6^q + \ell_9^q\ell_7 = 0,\\
  [yz^q]&:& \quad \ell_2\ell_6^q + \ell_9^q\ell_8 &= 0.
\end{alignedat}
\end{equation}
We now determine all invertible solutions of~\eqref{eq:C2_system}, noting that $w_6\neq 0$.

\emph{Case~$1$: $\ell_7\neq 0$.}
From the $[x^q y]$ and $[x^q z]$ equations,
$\mathbf{v}_3:=(\ell_7,\ell_8,\ell_9) = \lambda^q\,\mathbf{v}_1$
where $\mathbf{v}_1=(\ell_1,\ell_2,\ell_3)$ and $\lambda=\ell_4/\ell_7$.
Setting $\sigma = \ell_6 - \lambda\ell_9$, the $[xz^q]$ and $[yz^q]$
equations give $\sigma^q\ell_1=0$ and $\sigma^q\ell_2=0$.
The $[z^{q+1}]$ equation then yields $\sigma^q\ell_3 = w_6\neq 0$,
so $\sigma\neq 0$ and therefore $\ell_1=\ell_2=0$.
But then $\ell_7=\lambda^q\ell_1=0$, contradicting $\ell_7\neq 0$.

\emph{Case~$2$: $\ell_7=0$, $\ell_1=0$.}
Rows $\mathbf{L}_1$ and $\mathbf{L}_3$ both have zero first coordinate,
so $\det(L)=0$. Contradiction.

\emph{Case~$3$: $\ell_7=0$, $\ell_4=0$, $\ell_1\neq 0$.}
The $[x^{q+1}]$ equation is automatically satisfied.
The $[xz^q]$ equation gives $\ell_1\ell_6^q=0$, so $\ell_6=0$.
Then the $[yz^q]$ equation gives $\ell_9^q\ell_8=0$.
The $[z^{q+1}]$ equation gives $\ell_9^{q+1}=w_6\neq 0$, so $\ell_9\neq 0$
and hence $\ell_8=0$.
The $[y^{q+1}]$ equation then gives $\ell_2\ell_5^q=0$; since
$L$ is invertible and $\ell_4=\ell_6=\ell_7=\ell_8=0$, the middle column
of $L$ has entries $(\ell_2,\ell_5,0)$, forcing $\ell_2=0$ for
$\det(L)\neq 0$.
The $[xy^q]$ equation gives $\ell_1\ell_5^q=w_6=\ell_9^{q+1}$, and
the $[y^q z]$ equation gives $\ell_3\ell_5^q=w_6 a=\ell_1\ell_5^q\cdot a$,
hence $\ell_3=a\ell_1$.
In summary, the only invertible solutions to~\eqref{eq:C2_system} are
\begin{equation}\label{eq:L_form}
  L^{-1} \;=\; \operatorname{diag}(\ell_1,\ell_5,\ell_9)
  \begin{pmatrix} 1 & 0 & a \\ 0 & 1 & 0 \\ 0 & 0 & 1 \end{pmatrix},
  \qquad \ell_1,\ell_5,\ell_9\in\F_{2^m}^*.
\end{equation}

\smallskip
\noindent\textbf{Step~3: the first component yields the final contradiction.}
With $L^{-1}$ as in~\eqref{eq:L_form} we have
$\ell_2=0$, $\ell_4=0$, $\ell_8=0$, and $\ell_5\neq 0$.
The $[xy^q]$-coefficient equation arising from the \emph{first}
component of~\eqref{eq:EL_hyp} (i.e., $H_{b,1}(L(\mathbf{x}))=w_1 G_{a,1}(\mathbf{x})$)
reads
\[
  \ell_2^q\,\ell_1 + b\,\ell_1\,\ell_5^q + \ell_4\,\ell_8^q \;=\; 0.
\]
Substituting these values gives $b\,\ell_1\,\ell_5^q=0$.
Since $\ell_1,\ell_5\neq 0$, we obtain $b=0$, contradicting the assumption that $b$ is a good parameter.

\smallskip
This contradiction shows that no EL-equivalence
$G_a\sim_{\mathrm{EL}}H_b$ can exist for good parameters $a,b$. Since EL-, EA-, and CCZ-equivalence coincide in this setting, the claimed CCZ-inequivalence follows as well.
\end{proof}

\begin{remark}
The proof is independent of the value of $a^{q^2+q+1}$; in particular,
the assumption $a^{q^2+q+1}\neq 1$ (which would place $G_a$ outside the
diagonal class of $G_1$) is \emph{not} required.
The two families $\{G_a : a \text{ good}\}$ and $\{H_b : b \text{ good}\}$
are therefore entirely disjoint CCZ-classes for every odd $m$ satisfying
the hypotheses.
\end{remark}

\section{Conclusion and open problems}
\label{sec:conclusion}

We established a root-theoretic characterization of the generalized families $G_a$ and $H_a$ on $\F_{2^{3m}}$, with $q=2^i$, $\gcd(i,m)=1$, and $m$ odd. For $G_a$, the permutation property is equivalent to the absence of roots in $\F_{2^m}$ of the associated polynomial $Q_a$, and this root condition is also equivalent to the APN property (Theorem~\ref{thm:APN_perm_equiv}). We also obtained a quantitative lower bound on the number of good parameters $a$ (Theorem~\ref{thm:existence}), and in the binary case $q=2$ with $7\nmid m$ we showed that $a=1$ is good. For $H_a$, we obtained the analogous root criterion and proved that its one-variable condition is root-equivalent to that of $G_a$; hence the same good parameters yield APN permutations in both families.

First, $G_a$ (resp.\ $H_a$) is diagonally equivalent to the Li--Kaleyski representative $G_1$ (resp.\ $H_1$) of~\cite{LK24} if and only if $a^{q^2+q+1}=1$ (Theorem~\ref{thm:diag_equiv_both}); for $m>4$, $m\neq 6$, $7\nmid m$, diagonal non-equivalence implies CCZ non-equivalence via~\cite{Shi-Peng-Kan-Gao-2025}, and since $\gcd(7,2^m-1)=1$ when $q=2$ and $7\nmid m$, every good parameter $a\neq 1$ yields APN permutations CCZ-inequivalent to those of~\cite{LK24}. Second, for $m>4$, $m\neq 6$, and $7\nmid m$, no member of the $G_a$-family is CCZ-equivalent to any member of the $H_b$-family for the same $q$ (Theorem~\ref{thm:cross_family_inequiv}). The two families therefore constitute genuinely new, mutually inequivalent infinite sources of APN permutations on~$\F_{2^{3m}}$.

We close with a list of open problems:

\begin{enumerate}
\item For $q=2$ and odd $m$ with $7\mid m$: find an explicit good $a\in\F_{2^m}^*$ (i.e., $a\neq 1$ with $Q_a$ root-free). Computational data shows they exist in abundance; an algebraic characterization of the good $a$ set in terms of the norm $N_{\F_{2^m}/\F_2}(a)$ is desirable.

 
\item The Type I permutations ($a_1=a_4=0$) found computationally for $m=3$: do they extend to an infinite family for larger $m$? Are any of them APN?
 
\item The trace criterion (Lemma~\ref{lem:trace}) fails for even $m$. Does an analogue of Theorem~\ref{thm:APN_perm_equiv} hold in even dimension, possibly with a different characterizing polynomial?

\item Do analogous $n$-variate constructions ($n>3$) yield APN permutations, and can the polynomial root criterion be generalized to that setting?
\end{enumerate}

\section*{Acknowledgments}

The second-named author (PS) thanks the first-named author (DB) for the invitation  at the Dipartimento di Matematica e Informatica, Universit\`a degli Studi di Perugia, and great working conditions while there. The first-named author (DB) thanks the Italian National Group for Algebraic and Geometric Structures and their Applications (GNSAGA--INdAM).

\appendix
\section{Computational verification}
\label{sec:computational}

We verify Theorems~\ref{thm:APN_perm_equiv} and~\ref{thm:Ha} computationally for small parameters using SageMath~10.7; the code is available at~\cite{GithubPS26}.
For each test case $(m,i)$ with $\gcd(i,m)=1$ and $m$ odd, and for each $a\in\F_{2^m}^*$, we: (1) determine whether $Q_a(T)$ has a root in $\F_{2^m}$ by evaluating it on all elements; (2) verify whether $G_a$ is a permutation by checking $|\mathrm{Im}(G_a)|=2^{3m}$; (3) verify the APN property by exhaustive differential computation. In all cases examined, the three conditions agree 100\%, confirming our theorems.

\begin{table}[H]
\centering
\caption{Computational verification. Correlation between ``$G_a$ is a permutation/APN'' and ``$Q_a(T)$ has no roots'' is 100\% in all cases.}
\label{tab:verification_results}
\begin{tabular}{cccccc}
\hline
$m$ & $i$ & $q$ & $|\F_{2^m}^*|$ & \# permutations & Correlation\\
\hline
3 & 1 & 2 & 7  & 7/7   & 100\%\\
3 & 2 & 4 & 7  & 7/7   & 100\%\\
5 & 1 & 2 & 31 & 11/31 & 100\%\\
5 & 2 & 4 & 31 & 11/31 & 100\%\\
\hline
\end{tabular}
\end{table}

\textbf{Case $m=3$: all $a$ are good.} If $Q_a(\theta)=0$ for some $\theta\in\F_{2^3}^*$, then from $\theta^{q^2+q+1}=a\theta+1$ and raising to the $q$-th power: $\theta^{q^3+q^2+q}=a^q\theta^q+1$. Since $\theta\in\F_{2^3}=\F_{2^m}$ we have $\theta^{q^m}=\theta$, and $q^3=q^m$ (when $i=1,m=3$; or $q^3=2^3=8$ and $2^m=2^3=8$, so $\theta^{q^3}=\theta^{2^3}=\theta^8=\theta$ since $\theta^7=1$). This means $\theta^{q^3+q^2+q}=\theta^{q^2+q}$. Then $\theta^{q^2+q}=a^q\theta^q+1$. Combined with $\theta^{q^2+q+1}=a\theta+1$ we get $\theta\cdot(a^q\theta^q+1)=a\theta+1$, i.e., $a^q\theta^{q+1}+\theta=a\theta+1$, i.e., $a^q\theta^{q+1}+\theta+a\theta+1=0$. This is a polynomial equation of degree $q+1=3$ in $\theta$, which may or may not have solutions. A direct check over $\F_8$ (which has characteristic 2 and $|\F_8^*|=7$) confirms that $Q_a(T)$ has no roots in $\F_8$ for any $a\in\F_8^*$, so all 7 values are good.

\textbf{Case $m=5$:} Exactly 11 of 31 elements $a\in\F_{2^5}^*$ are good; the remaining 20 have $Q_a$ with 1 or 3 roots in $\F_{2^5}$. The value $a=1$ is good (confirming the $\gcd(5,7)=1$ condition).

\end{document}